\DeclarePairedDelimiterX{\infdivx}[2]{(}{)}{%
  #1\;\delimsize\|\;#2%
}
\newtheorem{theorem}{Theorem}[section]
\newtheorem{definition}[theorem]{Definition}
\newtheorem{proposition}[theorem]{Proposition}
\newtheorem{lemma}[theorem]{Lemma}
\newtheorem{claim}[theorem]{Claim}
\newtheorem{fact}[theorem]{Fact}
\newtheorem{observation}[theorem]{Observation}
\newtheorem{assumption}[theorem]{Assumption}
\newtheorem{remark}[theorem]{Remark}
\DeclareMathOperator*{\argmin}{arg\,min}
\DeclareMathOperator{\define}{\overset{def}{=}}
\DeclareMathOperator{\prox}{prox}
\DeclareMathOperator{\gd}{\textsc{GD}}
\DeclareMathOperator{\ogd}{\textsc{OGD}}
\DeclareMathOperator{\hgd}{\textsc{HGD}}
\DeclareMathOperator{\pp}{\textsc{PP}}
\DeclareMathOperator{\gogd}{\textsc{GOGD}}
\DeclareMathOperator{\pegd}{\textsc{PEGD}}
\DeclareMathOperator{\rgd}{\textsc{RGD}}
\DeclareMathOperator{\pid}{\textsc{PID}}
\date{}                     %% if you don't need date to appear
\author{
  Ioannis Anagnostides\\[-2mm]
  National Technical University of Athens\\[-2mm]
  \texttt{ioannis.anagnostides@gmail.com}
  \and
  Ioannis Panageas\\[-2mm]
  University of California, Irvine\\[-2mm]
  \texttt{ipanagea@ics.uci.edu}
}
\title{Frequency-Domain Representation of First-Order Methods: \\
    \Large{A Simple and Robust Framework of Analysis}
}
\begin{document}

\maketitle
\pagenumbering{gobble}

\begin{abstract}

Motivated by recent applications in min-max optimization, we employ tools from nonlinear control theory in order to analyze a class of ``historical'' gradient-based methods, for which the next step lies in the span of the previously observed gradients within a time horizon. Specifically, we leverage techniques developed by Hu and Lessard (2017) to build a frequency-domain framework which reduces the analysis of such methods to numerically-solvable algebraic tasks, establishing linear convergence under a class of strongly monotone and co-coercive operators.

On the applications' side, we focus on the Optimistic Gradient Descent (OGD) method, which augments the standard Gradient Descent with an additional past-gradient in the optimization step. The proposed framework leads to a simple and sharp analysis of OGD---and generalizations thereof---under a broad regime of parameters. Notably, this characterization directly extends under adversarial noise in the observed value of the gradient. Moreover, our frequency-domain framework provides an exact quantitative comparison between simultaneous and alternating updates of OGD. An interesting byproduct is that OGD---and variants thereof---is an instance of PID control, arguably one of the most influential algorithms of the last century; this observation sheds more light to the stabilizing properties of ``optimism''. 
\end{abstract}

\clearpage

\pagenumbering{arabic}
\section{Introduction}

Gradient-based algorithms have received extensive study in the optimization and machine learning communities due to their simplicity and their convergence properties \cite{DBLP:conf/colt/GeHJY15, DBLP:journals/mp/LeePPSJR19} (and references therein). Indeed, these methods have recently found numerous applications in non-convex optimization  (training of Neural Networks), as well as in min-max optimization (training of GANs \cite{DBLP:conf/nips/GoodfellowPMXWOCB14}). 

In min-max optimization, however, cycling behavior has been observed even in seemingly innocuous settings such as unconstrained bilinear landscapes, and several remedies have been suggested to ensure convergence \cite{DBLP:conf/iclr/DaskalakisISZ18, DBLP:conf/iclr/MertikopoulosLZ19, DBLP:conf/innovations/DaskalakisP19, DBLP:conf/iclr/WeiLZL21}. Specifically, some of the proposed variants use positive or negative momentum (e.g., Optimistic Gradient Descent/Ascent \cite{DBLP:conf/iclr/DaskalakisISZ18}). Momentum is a technique that gives different importance to the most recent values of the iterates and their gradients (depending on whether it is negative or positive) and can provably give better convergence guarantees. Nevertheless, dealing with the values of past iterates makes the analysis particularly challenging. Typical technical arguments include spectral analysis of the Jacobian of the corresponding updating rule \cite{pmlrpan,DBLP:conf/iclr/ZhangY20}---establishing only local convergence---or ad hoc potential function arguments (e.g., contraction of the KL divergence \cite{DBLP:conf/iclr/WeiLZL21}), and unfortunately the analysis works case by case. 
In this context, our work addresses the following question:
\smallskip

\begin{quote}
    \textit{Is there a simple and general framework that allows for global analysis of gradient-based methods wherein the update rule depends on the history---within a finite time horizon---of the past gradients?}
\end{quote}

Formally, for a finite time horizon $T\geq 1$, such ``historical'' methods can be described with the following dynamical system:

\begin{equation}\label{eq:history}
    x_{k+1} = x_k + \textrm{span}(F(x_k), \dots, F(x_{k-T+1})),
\end{equation}
where $F$ represents an operator associated with the gradients of the underlying objective function. For example, in the context of min-max optimization this operator would take the form $F^{\top} = [\nabla_x f(x,y)^{\top}, - \nabla_y f(x,y)^{\top}]$, for some objective function $f(x,y)$ \cite{diakonikolas2021potential}. Our primary contribution is to answer this question in the affirmative, developing a systematic, robust (and rather elementary) analysis of gradient-based methods that make use of past gradients \eqref{eq:history}. To be more precise, one of the key observations is that, assuming linear dynamics, the aforementioned system \eqref{eq:history} can be analyzed very cleanly through a frequency-domain representation of the dynamics derived via the $Z$-transform, the discrete-time analog of the Laplace transform. One of our main technical insights is to show that such clean characterizations can be extended well-beyond (bi)linear settings via tools from nonlinear control theory. More concretely, we derive a succinct representation of the so-called \emph{transfer function} of the controller associated with each optimization method (see \Cref{tab:transfer_function}), and then it suffices to characterize this rational function via well-known and elementary (at least for ``low-degree'' methods such as Optimistic Gradient Descent/Ascent) calculations. These techniques turn out to be different from just spectral analysis of the Jacobian \cite{pmlrpan, DBLP:conf/iclr/ZhangY20} because they allow for global convergence guarantees and tight rates. To the best of our knowledge such reductions have not been used before in the analysis of gradient-based methods.

\paragraph{Our Contributions.} Most existing results regarding the analysis of systems of the form \eqref{eq:history} suffer from at least one the following caveats: (i) the analysis is particularly complex and ad hoc in nature, (ii) the bounds obtained with respect to the learning rate and the rate of convergence are far from sharp, (iii) the scope of the analysis is fairly limited; for example, the convergence guarantee only applies for unconstrained and bilinear games. 

Our approach can effectively address all of these issues. More concretely, let us consider as a benchmark the Optimistic Gradient Descent method ($\ogd$). Firstly, our analysis is considerably simpler than the existing ones, reducing the characterization of $\ogd$ to performing elementary calculations. Secondly, our approach reveals the exact region of convergence for $\ogd$. Indeed, we prove that our (stability) bound with respect to the learning rate is existentially tight; we are not aware of any prior works establishing this result. Sharp bounds are also obtained for the rate of convergence. Thirdly, our framework applies under the broad class of strongly monotone and co-coercive operators (see \Cref{assumption:sector}), going well-beyond the usually considered (bi)linear dynamics. In fact, even for the standard (``single-player'') setting, \Cref{assumption:sector} is strictly more expressive than the usual smoothness and (strongly) convexity assumptions \cite{hu2017control}.

Our results are also particularly robust in several ways. First, we are able to analyze the so-called \emph{generalized} $\ogd$ method, previously considered by Mokhtari et al. \cite{DBLP:conf/aistats/MokhtariOP20}. Specifically, they analyzed an extension of $\ogd$ wherein the coefficients are slightly perturbed. In this work we manage to obtain a simple characterization of the generalized $\ogd$ method under a much broader regime of parameters, strengthening their results. Importantly, we are able to analyze methods well-beyond this slight extension of $\ogd$. In particular, building on the intuition of $\ogd$, it is natural to ask whether we can analyze algorithms which linearly combine multiple ``historical'' gradients, i.e. $T > 2$ in \eqref{eq:history}, instead of only the current and the previously observed gradients (as in $\ogd$). In this context, we explain how one can \emph{numerically} analyze such a broad class of algorithms using an appropriate reduction. In fact, this reduction goes beyond the class of methods captured in \eqref{eq:history}, allowing to linearly combine the previous states as well (see \Cref{remark:extension}). We stress that this general characterization appeared elusive with some of the techniques employed in prior works. Finally, our results directly extend in the presence of \emph{relative deterministic noise}, a standard model in control theory wherein an adversary can corrupt the observed value of the gradient depending on its current norm; cf., see \cite{DBLP:journals/corr/abs-2107-10211}.

An interesting byproduct of our results is a connection between the notion of optimism and well-established techniques in control theory. Specifically, arguably the most widely-employed algorithm in control theory is the so-called \emph{proportional-integral-derivative} ($\pid$) control. We make the following observation:

\begin{observation}
Optimistic Gradient Descent (and generalizations thereof) is an instance of $\pid$ control.
\end{observation}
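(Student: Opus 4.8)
The plan is to prove this by writing down both the general PID control law and the update rules for OGD (and its generalizations), then exhibiting an explicit choice of PID coefficients that makes the two coincide. Since this is an ``Observation'' rather than a deep theorem, the proof is really a matter of algebraic matching, and the frequency-domain viewpoint advertised in the introduction is exactly the right language to carry it out cleanly.

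First I would recall the standard discrete-time PID controller. In continuous time the control signal is $u(t) = K_P e(t) + K_I \int_0^t e(\tau)\,d\tau + K_D \frac{d}{dt} e(t)$, where $e$ is the error signal fed into the controller; in the optimization setting the natural choice is to let the error be the (negative) gradient, i.e.\ $e_k = -F(x_k)$, so that the proportional term is plain gradient descent. Discretizing the integral as a running sum $\sum_{j \le k} e_j$ and the derivative as a finite difference $e_k - e_{k-1}$, the discrete PID update reads
\begin{equation*}
    x_{k+1} = x_k + K_P e_k + K_I \sum_{j=0}^{k} e_j + K_D (e_k - e_{k-1}).
\end{equation*}
The key structural feature is that the derivative term supplies exactly a $-F(x_{k-1})$ contribution, which is the hallmark of optimism.

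Next I would write the OGD update in the same notation. In its standard form OGD takes $x_{k+1} = x_k - 2\eta F(x_k) + \eta F(x_{k-1})$, i.e.\ it combines the current gradient with a single past gradient. With $e_k = -F(x_k)$ this is $x_{k+1} = x_k + 2\eta e_k - \eta e_{k-1}$. Matching term-by-term against the PID law (with the integral gain switched off, $K_I = 0$, since OGD has no running-sum accumulator) gives $K_P + K_D = 2\eta$ and $-K_D = -\eta$, hence $K_D = \eta$ and $K_P = \eta$. Thus OGD is precisely a proportional-derivative (PD) controller with equal gains, a special case of PID. For the generalized OGD method of Mokhtari et al., where the two coefficients are perturbed independently, one simply reads off the corresponding $K_P$ and $K_D$ (again with $K_I = 0$), and if a running-sum/integral term is present in a variant one recovers a genuinely nonzero $K_I$. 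The cleanest way to present all of this uniformly is via the $Z$-transform: I would compute the transfer function of the PID controller, $C(z) = K_P + K_I \frac{z}{z-1} + K_D \frac{z-1}{z}$, and the transfer function associated with OGD from \Cref{tab:transfer_function}, and observe that the latter is obtained from the former by the stated specialization of the gains. This makes the identification manifest and simultaneously clarifies which PID component each part of the optimization step plays.

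I do not expect a genuine obstacle here, since the statement is an identification rather than an inequality or a convergence bound; the only thing requiring care is bookkeeping of sign and normalization conventions. The one subtlety worth flagging is the sign of the error signal and the placement of the factor of $\eta$: depending on whether one absorbs the step size into the gains or keeps it separate, the matched coefficients shift, so I would fix the convention $e_k = -F(x_k)$ at the outset and carry it consistently. A secondary point is that the discrete derivative and integral can each be discretized in more than one way (forward vs.\ backward difference, left vs.\ right Riemann sum); I would choose the backward difference and running sum so that the controller remains causal and the transfer function has the rational form above, matching the conventions used elsewhere in the paper.
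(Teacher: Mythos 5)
Your identification is correct and reproduces the OGD recursion, but it rests on a different convention from the paper's and therefore lands on different gains. You take the PID output to be the \emph{increment} $x_{k+1}-x_k$, which forces $K_I=0$ and yields a PD controller with $K_P=K_D=\eta$. The paper instead defines the controller output as the position fed to the plant, $u_k = x_k - x^*$ (see \eqref{eq:PID-control} and \Cref{proposition:PID-TF}), and matches transfer functions to get $(K_P,K_I,K_D)=(\beta,\alpha,-\beta)$ for $\gogd$ in \Cref{proposition:PID-GOGD}, i.e.\ $(\eta,\eta,-\eta)$ for $\ogd$: a full PID law with \emph{nonzero integral gain} and \emph{negative derivative gain}. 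The two conventions differ by one integration (yours is the ``velocity form'' of PID, with the accumulator $x_{k+1}=x_k+(\text{control})$ absorbed into the plant), so both are legitimate readings of the Observation; but the paper's is the one compatible with its Lur'e/feedback-interconnection framework, where the plant must remain the \emph{static} nonlinearity $u\mapsto F(u+x^*)$ and the controller output must be its argument. Under your convention the plant seen by the controller is $F$ composed with an integrator, which no longer satisfies the sector bound used throughout, so your version of the identification cannot be plugged into \Cref{tab:transfer_function} or \Cref{theorem:main_theorem}. The positional convention also carries the cleaner structural message: $\gd$ is pure integral control, and optimism adds proportional plus negative-derivative action. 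Your transfer-function matching plan is otherwise exactly the paper's mechanism; just redo the matching against the paper's $K(z) = -\bigl((K_P+K_D)z^2+(-K_P+K_I-2K_D)z+K_D\bigr)/(z^2-z)$ rather than against the non-strictly-proper $C(z)$ you wrote down.
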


This nexus sheds additional light to the stabilizing properties of Optimistic Gradient Descent/Ascent in the context of zero-sum games. That is, the addition of the ``optimistic'' term can be very well-understood from the viewpoint of control theory, and in fact, this opens the door to the employment of an immense amount of theoretical and empirical results regarding $\pid$ control to better understand $\ogd$. More broadly, we believe that this connection may encourage further interdisciplinary research between online learning and control theory. 

Finally, we provide a qualitative comparison between \emph{simultaneous} and \emph{alternating} Optimistic Gradient Descent/Ascent in bilinear games. This consideration is concretely motivated since rigorously understanding the differences between these dynamics (beyond $\ogd$) constitutes a major open problem in optimization. In this context, we derive an exact region of stability for the alternating $\ogd$ method, and we compare its rate of convergence to that of the simultaneous dynamics. The unifying thread with our other results is a frequency-domain framework of analysis, which effortlessly gives exact bounds with elementary and remarkably simple techniques. We should note that a similar characterization was established in \cite{DBLP:conf/iclr/ZhangY20} with different tools; see also \cite{DBLP:journals/corr/abs-2102-09468}.

\paragraph{Technical Overview.} Most of our results are established based on tools from nonlinear control theory. Specifically, we use the formulation devised by Hu and Lessard \cite{hu2017control}, which views first-order optimization methods in the context of the so-called \emph{Lur'e} problem, one of the cornerstones of nonlinear control theory. More precisely, in this formulation one has to analyze two separate components: $(1)$ the nonlinearity (usually referred to as the \emph{plant} in the literature of control theory), which corresponds to the operator (e.g. the min-max gradients), and $(2)$ the controller, which represents the optimization method employed. As in \cite{hu2017control}, we will employ the fundamental \emph{small gain theorem} \cite{Khalil:1173048}, wherein it suffices to bound the \emph{gain}---the maximum ratio of the norm of the output to the norm of the input---of each component.

As a result, establishing linear convergence for an optimization method requires the following simple steps. First, deriving a frequency-domain representation of each algorithm; in \Cref{tab:transfer_function} we illustrate this representation for the methods studied in the present work, leading to a particularly succinct representation of first-order algorithms. The second step requires evaluating the stability of the induced transfer function\footnote{To simplify our high-level description, we omit certain additional (simple) transformations required in these steps.} under the set of parameters associated with the optimization method. For ``low-degree'' methods---such as $\ogd$---this can be simply performed by analytically determining the poles (the roots of the denominator) of the induced transfer function; nonetheless, we note that even for arbitrary polynomials the stability can be efficiently tested using an array of standard schemes \cite{1457261,989164}. Finally, it suffices to maximize the transfer function over the unit circle in the $z$-plane in order to determine the gain of the controller, and verify that the conditions of the small gain theorem are met. Importantly, we stress that these steps can be numerically automated under a generic class of first-order methods.

We apply this framework to establish linear convergence for $\ogd$, as well as its generalization, leading to a considerably simpler analysis than the existing ones. Interestingly, we are also able to give a remarkably simple analysis for the Proximal Point method, which is also an instance of $\pid$ control. A particularly compelling feature of this framework is its modularity. That is, even if we impose different assumptions on the operator, the analysis for each controller remains intact, largely simplifying future extensions and applications. For example, we leverage this feature to obtain bounds under (a limited amount of) adversarial noise on the observed value of the operator. In particular, the addition of noise can be incorporated in the nonlinear component, and by virtue of our previous observation it suffices to derive the new gain of the noisy nonlinearity, without altering the analysis for the controller. It is also interesting to note that this frequency-domain representation allows for a visualization of each optimization method via the \emph{circle criterion} (sometimes called generalized Nyquist), illustrating the effect of ``optimism'' on the standard Gradient Descent method (see \Cref{fig:Nyquist-GOGD}).

\begin{table}[ht]
    \centering
    \begin{tabular}{ c|c|c } 
 \toprule
 Optimization Algorithm & Parameters & Transfer Function \\
 \midrule
 $\pid$ Control \eqref{eq:PID-control} & $(K_P, K_I)$ & $\frac{-(K_P + K_I)z + K_P}{z^2 - z}$ \\ \hline
 Gradient Descent ($\gd$) \eqref{eq:GD} & $\eta$ & $-\frac{\eta}{z-1} $ \\ \hline
 Optimistic $\gd$ \eqref{eq:OGD} & $\eta$ & $\frac{\eta(1-2z)}{z^2 - z}$ \\ \hline
 Gen. Optimistic $\gd$ \eqref{eq:GOGD} & $(\alpha, \beta)$ & $\frac{\beta - (\alpha + \beta)z}{z^2 - z}$ \\ \hline
 Proximal Point \eqref{eq:PP} & $\eta$ & $- \frac{\eta z}{z - 1}$ \\  \bottomrule
\end{tabular}
    \caption{The frequency-domain representation of the optimization algorithms analyzed in this work. We should point out that the general $\pid$ control is more expressive than the form illustrated in this table (refer to \Cref{proposition:PID-TF}), subsuming even the Proximal Point method.}
    \label{tab:transfer_function}
\end{table}

Overall, we argue that these techniques are particularly well-suited for the analysis of \emph{single-call} variants of the Extra-Gradient method \cite{WANG2001969}. Nevertheless, it seems unclear how to apply this framework to characterize algorithms such as the vanilla Extra-Gradient. In the context of min-max optimization, a technical difficulty arises when the dynamics are not symmetric with respect to the two players, as is the case in \emph{alternating} dynamics. However, we can still derive a frequency-domain representation of the dynamics under the usual hypothesis of a bilinear objective function. In particular, this representation follows directly by transferring the time-domain dynamics to the $z$-space. Then, the behavior of the dynamics is captured by the characteristic equation, and it boils down to analyzing the roots of a (low-degree) polynomial, which is straightforward.

\paragraph{An illustrative example.} To be more concrete, we explain how the proposed framework yields a sharp analysis of vanilla Gradient Descent via the small gain theorem (\Cref{theorem-sgt}), which is due to Hu and Lessard \cite{hu2017control}. In particular, their analysis consists of the following observations:

\begin{enumerate}
    \item The controller of $\gd$ is $K(z) = - \eta/(z-1)$, while for $\eta = 2/(L + \mu) = h^{-1}$ the \emph{complementary sensitivity function} $K'(z) \define K(z)/(1 - hK(z))$ can be expressed as $K'(z) = - \eta/z$;
    \item $K'(\rho z)$ is stable for any $\rho \in (0,1)$;
    \item For $|z| = 1, |K'(\rho z)| = \eta/\rho$.
\end{enumerate}

Thus, if we invoke the small gain theorem, and in particular \Cref{theorem:main_theorem}, we can conclude that $\gd$ exhibits linear convergence with rate $\rho$ for any $\rho$ such that $\eta/\rho < 2/(L - \mu) \iff  \rho > (L-\mu)/(L + \mu)$, which is a well-known result. This simple skeleton will be used for the analysis of more complicated methods, such as $\ogd$.

\paragraph{Related Work.} There has been a tremendous amount of research in recent years in the interface of optimization, game theory, and even control theory. Most notably, we emphasize on the following directions: 

\paragraph{Limit Cycles in Zero-Sum Games.} It is well-documented by now that extensive families of \emph{no-regret} algorithms, such as Follow the Regularized Leader (FTRL), exhibit limit cycles or recurrence behavior in zero-sum games and even potential games \cite{DBLP:conf/soda/MertikopoulosPP18,DBLP:conf/nips/Vlatakis-Gkaragkounis19a,DBLP:conf/colt/CheungP19,DBLP:conf/nips/PalaiopanosPP17,DBLP:conf/innovations/PapadimitriouP16}, although the \emph{time-average} of these (no-regret) dynamics asymptotically approaches an equilibrium of the game. Unfortunately, a regret-based analysis cannot distinguish between a self-stabilizing system and one with recurrent cycles, and as a result, inherently different techniques are required to establish \emph{last-iterate} convergence.

\paragraph{$\ogd$ and Optimism.} In light of the shortcomings of standard paradigms such as FTRL, a recent breakthrough was made by Daskalakis, Ilyas, Syrgkanis, and Zeng \cite{DBLP:conf/iclr/DaskalakisISZ18}, showing that a simple variant of Gradient Descent, namely Optimistic Gradient Descent ($\ogd$), exhibits pointwise convergence to an equilibrium of the game, assuming that the objective function is \emph{unconstrained} and \emph{bilinear}, i.e. $f : \mathbb{R}^n \times \mathbb{R}^m \ni (x,y) \mapsto x^{\top} \mathbf{A} y$, for a matrix $\mathbf{A} \in \mathbb{R}^{n \times m}$. Several aspects of their analysis were improved in follow-up work \cite{DBLP:conf/aistats/LiangS19,DBLP:conf/aistats/MokhtariOP20,DBLP:conf/iclr/MertikopoulosLZ19,DBLP:conf/iclr/ZhangY20}, with the state of the art characterization being due to Wei et al. \cite{DBLP:conf/iclr/WeiLZL21}. The last-iterate convergence of $\ogd$ is very much pertinent to a classical almost $50$-years old result due to Korpelevich \cite{Korpelevich1976TheEM}, showing that, unlike Gradient Descent, the Extra-Gradient method (pointwise) converges to a solution of a certain (constrained) Variational Inequality (VI) problem without requiring \emph{strict} monotonicity---as is the case for bilinear games. The connection is that $\ogd$ constitutes a so-called \emph{single-call} variant of the Extra-Gradient method, as it essentially requires a single gradient evaluation per step. 

More broadly, $\ogd$ is based on the technique of \emph{optimism}, which has led to accelerated online learning algorithms under \emph{predictable} sequence patterns \cite{DBLP:conf/colt/RakhlinS13,DBLP:conf/nips/RakhlinS13,DBLP:conf/nips/SyrgkanisALS15}. As a result, optimism has attained two fundamental desiderata: \emph{stability} and \emph{acceleration}, leading to a substantial body of work in the last few years from various perspectives; cf.,  \cite{DBLP:conf/nips/DaskalakisP18,DBLP:conf/nips/CheungP20}. Beyond Optimistic Gradient Descent, there has also been interest in understanding the convergence properties of the Optimistic Multiplicative Weights Update (OMWU) method, commencing from Daskalakis and Panageas \cite{DBLP:conf/innovations/DaskalakisP19}; see also \cite{DBLP:journals/corr/abs-2107-01906,DBLP:conf/iclr/WeiLZL21} for several extensions and improvements in the analysis. 

\paragraph{Other Approaches.} In the context of training GANs, many other approaches have been proposed to address the shortcomings of traditional Gradient Descent. Most notably, Chavdarova et al. \cite{DBLP:conf/iclr/ChavdarovaPSFJ21} used the technique of \emph{lookahead} to stabilize training, previously proposed in ``single-agent'' optimzation by Wang et al. \cite{DBLP:conf/icassp/WangTBR20}. Moreover, Balduzzi et al. \cite{DBLP:conf/icml/BalduzziRMFTG18} proposed the Sympletic Gradient Adjustment algorithm, which is based on a decomposition of the dynamics to a \emph{potential} game and a \emph{Hamiltonian} game. Closely related to the intuition of $\ogd$, Gidel et al. \cite{DBLP:conf/aistats/GidelHPPHLM19} employed \emph{negative momentum} to stabilize the dynamics; see also the work of Zhang and Wang \cite{DBLP:conf/aistats/Zhang021} for explicit rates of convergence using negative momentum beyond bilinear games. Furthermore, Wang et al. \cite{DBLP:conf/iclr/WangZB20} proposed the \emph{Follow-the-Ridge} algorithm, and they showed that it always converges to \emph{local minimax} points. Finally, techniques from control theory have also been used in this line of work; see  \cite{DBLP:journals/corr/abs-1909-13188,DBLP:conf/icml/CheungP21}, and references therein. We should note that many other empirically-based approaches to obtain stability---without theoretical guarantees---have been proposed; most notably, we refer the interested the reader to the work of Salimans et al. \cite{DBLP:conf/nips/SalimansGZCRCC16}.

\paragraph{Alternating vs Simultaneous Updates.} Most of the results we have stated thus far are applicable for the so-called \emph{simultaneous} updates. Nonetheless, \emph{alternating} dynamics are also of particular importance in optimization and game theory; we refer to the excellent account in \cite{pmlr-v125-bailey20a}. Specifically, one notable applications relates to understanding the remarkable success of $\textsc{CFR}+$ \cite{10.1145/3131284} (in games such as poker), a variant of the widely-studied \emph{counterfactual regret minimization} ($\textsc{CFR}$) scheme, which switches from simultaneous to alternating updates. 

\paragraph{$\pid$ Control.} The so-called \emph{proportional-integral-derivative} ($\pid$) control constitutes one of the cornerstones of control theory, and it remains the main paradigm employed in modern industrial applications. There is voluminous amount of research regarding $\pid$ control; we refer to some of the standard textbooks/surveys on the matter \cite{johnson2005pid,knospe2006pid,visioli2006practical,aastrom2006advanced,li2006pid}. More recently, there has also been interest in employing $\pid$ control even in deep learning applications, as many commonly used optimization methods are essentially subsumed by the $\pid$ controller \cite{DBLP:conf/cvpr/AnWSXDZ18}.

\paragraph{Automated Analysis of Algorithms.} The idea of performing automated analysis of algorithms has also appeared in the literature of optimization, using \emph{semi-definite programming} (SDP) in order to solve \emph{linear matrix inequalities} (LMIs); for detailed discussions on this matter we suggest the excellent work of Lessard et al. \cite{DBLP:journals/siamjo/LessardRP16}. We also refer to the more recent work of Zhang et al. \cite{DBLP:journals/corr/abs-2009-11359}.

\section{Preliminaries}
\label{section:preliminaries}

Before we proceed with some important concepts from optimization, let us first introduce some basic notation. In particular, for a vector $x \in \mathbb{R}^d$, we will use $||x||$ to represent its Euclidean norm. For vectors $x, y$ we will write $\langle x, y \rangle$ to denote the inner product of $x$ and $y$. In the sequel, $F$ will represent a \emph{single-valued} operator, such that $F: \mathbb{R}^d \to \mathbb{R}^d$ for some $d \in \mathbb{N}$. 

\begin{definition}[Monotonicity]
The operator $F$ is said to be $\mu$-monotone, for some $\mu \geq 0$, if for all $x, x' \in \mathbb{R}^d$,

\begin{equation}
    \langle F(x) - F(x'), x - x' \rangle \geq \mu || x - x' ||^2.
\end{equation}
For $\mu = 0$, the operator will be referred to as monotone.
\end{definition}

\begin{definition}[Co-coercivity]
The operator $F$ is said to be co-coercive with parameter $1/L$, for some $L > 0$, if for all $x, x' \in \mathbb{R}^d$,

\begin{equation}
    \langle F(x) - F(x'), x - x' \rangle \geq \frac{1}{L} || F(x) - F(x') ||^2.
\end{equation}
\end{definition}

It should be noted that the Cauchy-Schwarz inequality implies that a co-coercive operator with parameter $1/L$ is also $L$-Lipschitz. We will make the following assumptions for the operator $F$:

\begin{assumption}
    \label{assumption:sector}
    The operator $F$ is $\mu$-monotone and co-coercive with parameter $1/L$.
\end{assumption}

With a slight abuse of notation, the ratio $\kappa := L/\mu > 1$ will be referred to as the \emph{condition number} of the operator $F$.

\begin{assumption}
    \label{assumption:unique}
    There exists a unique point $x^* \in \mathbb{R}^d$ such that $F(x^*) = 0$.
\end{assumption}

\paragraph{Min-Max Optimization.} In the context of min-max optimization, let $f(x,y) : \mathbb{R}^n \times \mathbb{R}^m \ni (x,y) \mapsto \mathbb{R}$ represent the \emph{objective function} of the game, such that player $x$ represents the ``minimizer'' and player $y$ the ``maximizer''. The operator $F$ of the \emph{min-max gradients} is defined as $F^{\top} = [ \nabla_x f(x,y)^{\top}, - \nabla_y f(x,y)^{\top} ]$. It is well-known that if $f(x,y)$ is $\mu$-strongly convex with respect to $x$ and $\mu$-strongly concave with respect to $y$ (which is the usual assumption in the literature), then $F$ is $\mu$-monotone; for completeness, a proof of this statement is included in \Cref{appendix:monotonicity}. The co-coercivity assumption for the min-max gradients is also fairly standard in the literature of min-max optimization; cf. \cite{diakonikolas2021potential}. It should be noted that co-coercivity implies monotonicity and Lipschitz continuity, but the opposite is not necessarily true, unless, for example, $F$ represents the gradient of a smooth and convex function. Nonetheless, the extension to general smooth min-max optimization is possible via \emph{approximate resolvent operators} \cite{DBLP:conf/colt/Diakonikolas20,diakonikolas2021potential}, and hence, co-coercivity captures the setting of smooth min-max optimization. Finally, although our emphasis lies on the unconstrained setting, extensions to constrained games are possible via \emph{operator mappings} \cite{DBLP:conf/colt/Diakonikolas20}.

\subsection{Tools from Nonlinear Control Theory} 

Throughout this work we will use the formulation devised by Hu and Lessard \cite{hu2017control}, which views first-order methods as a \emph{feedback interconnection system}; see \Cref{fig:P-K}. Specifically, this reduces the analysis to characterizing two separate components:

\begin{itemize}
\item The (static) nonlinearity $P$ associated with the operator $F$;
\item The controller $K$ associated with the optimization method.
\end{itemize}

\begin{figure}
\centering
\includegraphics[scale=0.7]{./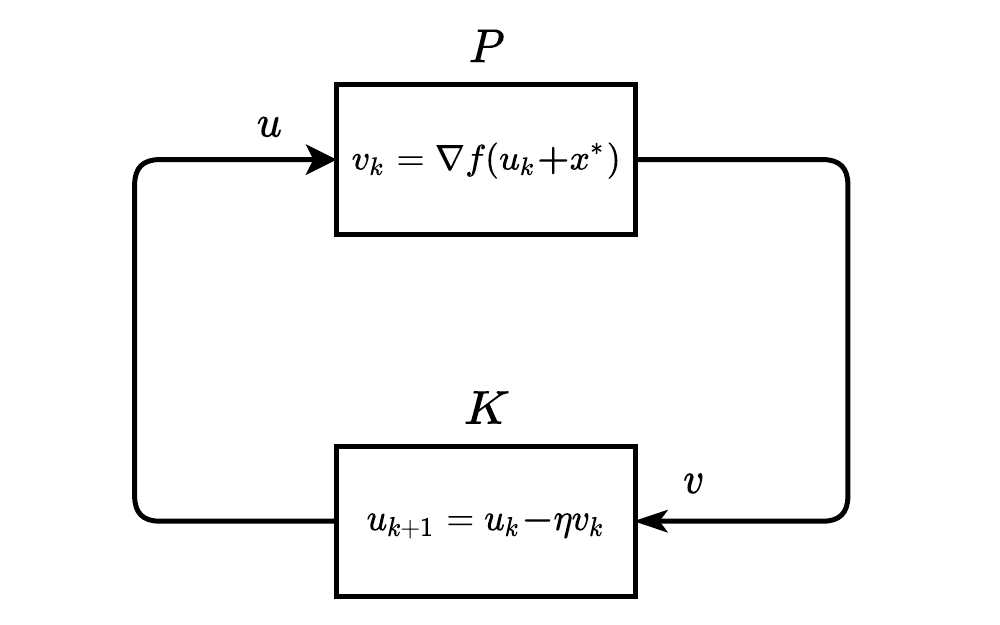}
\caption{Casting Gradient Descent as a $[P, K]$ feedback interconnection loop.}
\label{fig:P-K}
\end{figure}

In particular, Hu and Lessard \cite{hu2017control} used the fundamental \emph{small gain theorem} to analyze the induced feedback interconnection, and subsequently to characterize algorithms such as Gradient Descent. Before we proceed with their main technical result, let us first introduce some basic concepts from control theory. First, the (bilateral) $Z$-transform of a signal $u_k$ is defined as $U(z) = \mathcal{Z}\{u_k\} = \sum_{-\infty}^{+\infty} u_k z^{-k}$. We will extensively use the \emph{time-delay} property of the $Z$-transform: $\mathcal{Z} \{ u_{k - r} \} = z^{-r} U(z)$. A \emph{linear time-invariant} system (LTI) can be described through its transfer matrix $\mathbf{K}(z)$, such that for any input/output pair $(u, v)$ it holds that $V(z) = \mathbf{K}(z) U(z)$. The transfer matrix $\mathbf{K}(z)$ is said to be \emph{strictly proper} if $\mathbf{K}(\infty) = 0$; this property ensures that the feedback interconnection loop of \Cref{fig:P-K} is \emph{well-posed}. Moreover, the \emph{gain} $||H||$ of a system $H$ is defined as the minimal (finite) $\gamma \geq 0$ such that for any \emph{square summable} input signal $u$, the norm of the output $Hu$ is at most $\gamma$ times the norm of the input, assuming that such $\gamma$ exists.

\begin{restatable}[\cite{hu2017control}, Theorem 2]{theorem}{maintheorem}
    \label{theorem:main_theorem}
    Consider an operator $F$ satisfying \Cref{assumption:sector} and \Cref{assumption:unique}, and let $\mathbf{K}'(z) = \mathbf{K}(z)(\mathbf{I} - h \mathbf{K}(z))^{-1}$, where $h = (L + \mu)/2$. If $||K'_{\rho} || = ||\mathbf{K}'(\rho z)|| < 2/(L - \mu)$, for some $\rho \in (0,1)$, and $\mathbf{K}(z)$ is strictly proper, then the optimization algorithm described with controller $K$ convergences linearly with rate $\rho$ to the unique fixed point of $F$.
\end{restatable}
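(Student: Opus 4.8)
The plan is to follow the Lur'e/feedback-interconnection strategy underlying the small gain theorem, exactly as in the illustrative Gradient Descent computation, but carried out for a generic controller $\mathbf{K}$. First I would translate the dynamics so that the fixed point sits at the origin: since $F(x^*) = 0$ by \Cref{assumption:unique}, I work with the error signals $e_k \define x_k - x^*$ and the centered nonlinearity $\phi(e) \define F(x^* + e)$, which satisfies $\phi(0)=0$. The optimization method is then literally the feedback interconnection of the linear time-invariant block $\mathbf{K}$ with the static (memoryless) nonlinearity $\phi$, so that bounding the two gains and invoking the small gain theorem \cite{hu2017control, Khalil:1173048} suffices.

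The core of the argument is a loop transformation that symmetrizes the sector. I would write $\phi = h\,\mathbf{I} + \Delta$ with $h = (L+\mu)/2$ and $\Delta \define \phi - h\,\mathbf{I}$, and push the constant gain $h$ across the loop; algebraically this is precisely what turns the controller $\mathbf{K}$ into $\mathbf{K}' = \mathbf{K}(\mathbf{I} - h\mathbf{K})^{-1}$, while leaving $\Delta$ as the new nonlinearity. The content of \Cref{assumption:sector} is that $\Delta$ is confined to the centered sector of radius $r \define (L-\mu)/2$, i.e. the $L_2$ gain of the static map $\Delta$ is at most $r$; this is where the $\mu$-monotonicity and $1/L$-co-coercivity are consumed. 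At this point the hypothesis $\|\mathbf{K}'(\rho z)\| < 2/(L-\mu) = 1/r$ is exactly the small gain condition $r\cdot\|\mathbf{K}'(\rho z)\| < 1$ for the transformed loop.

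The remaining, and conceptually most delicate, step is to upgrade mere $L_2$-stability of the interconnection into a pointwise geometric rate $\rho$. The device is exponential reweighting: replacing every signal $s_k$ by $\rho^{-k} s_k$ scales the $Z$-domain variable $z \mapsto \rho z$, so the controller becomes $\mathbf{K}'(\rho z)$ (whence the notation $K'_\rho$), whereas the gain of the memoryless block $\Delta$ is invariant under this reweighting and stays $\le r$. Strict properness of $\mathbf{K}(z)$ guarantees that the loop is well-posed, while finiteness of $\|K'_\rho\|$ encodes that $\mathbf{K}'(\rho z)$ is stable—no poles outside the unit disk—mirroring steps (2)--(3) of the Gradient Descent example. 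Applying the small gain theorem to the reweighted loop then shows that the reweighted errors $\rho^{-k} e_k$ form a square-summable sequence; in particular they are bounded, which is exactly $\norm{x_k - x^*} = O(\rho^k)$, i.e. linear convergence with rate $\rho$, and \Cref{assumption:unique} identifies this limit as the unique zero of $F$.

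I expect the main obstacle to be the gain estimate for the transformed nonlinearity $\Delta$: translating the two inequalities of \Cref{assumption:sector} into a clean uniform bound $\norm{\Delta(e)} \le r\,\norm{e}$ (equivalently the quadratic sector inequality $\langle \phi(e) - \mu e,\ \phi(e) - L e\rangle \le 0$) is the one place where the structure of the operator genuinely enters, and it is subtle precisely when $F$ carries a large skew-symmetric part, as in bilinear min-max instances. This bound must moreover hold pointwise in $e$ so that it survives the exponential reweighting. The bookkeeping around well-posedness and the stability of $\mathbf{K}'(\rho z)$—ensuring $(\mathbf{I} - h\mathbf{K})^{-1}$ is well-defined on the circle $|z| = \rho$—is routine by comparison, and is handled entirely by the strict-properness hypothesis.
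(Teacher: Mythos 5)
Your proposal follows essentially the same route as the paper's proof: the loop-shift by $h=(L+\mu)/2$ producing $\mathbf{K}'=\mathbf{K}(\mathbf{I}-h\mathbf{K})^{-1}$ and a shifted nonlinearity of gain at most $(L-\mu)/2$, the exponential reweighting $z\mapsto\rho z$ under which the static nonlinearity's sector bound is invariant, well-posedness from strict properness, and the small gain theorem applied to the transformed loop. The only packaging difference is that the paper delegates the final upgrade from finite-gain $\ell_2$-stability of the reweighted loop to pointwise exponential decay of the state to a cited result of Boczar, Lessard, and Recht (\Cref{lemma:exponential_reduction}), and the sector-to-gain translation you rightly flag as the delicate point to \Cref{lemma:P-gain}.
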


We should note that $\mathbf{K}'(z)$ is called the \emph{complementary sensitivity matrix}, and it derives from a \emph{linear shift transformation} of the original feedback interconnection (see \Cref{appendix:lst}). Importantly, observe that this theorem essentially reduces establishing linear convergence to determining the \emph{gain} of the controller associated with the optimization method. To this end, we will use the following standard result:

\begin{restatable}{theorem}{Hinfinity}
    \label{theorem:H-infinity}
The $\ell_2$-gain of a stable (finite-order) LTI system $K$ equals its $\mathcal{H}_{\infty}$-norm, i.e.,

\begin{equation}
    ||K|| = \mathcal{H}_{\infty} = \sup_{\omega \in [-\pi, \pi]} \sigma_{max} \left( \mathbf{K}(e^{j \omega}) \right),
\end{equation}
where $\mathbf{K}(z)$ is the transfer matrix of the system, and $\sigma_{max}()$ is the maximum singular value.
\end{restatable}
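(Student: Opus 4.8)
The plan is to pass to the frequency domain via Parseval's identity and then exploit the submultiplicativity of the spectral norm. First, since $K$ is a finite-order stable LTI system, its impulse response is absolutely summable, so $K$ maps square-summable signals to square-summable signals and the gain $\|K\| = \sup_{u \neq 0} \|Ku\|/\|u\|$ is well-defined; moreover the transfer matrix $\mathbf{K}(e^{j\omega})$ is continuous on the unit circle, because stability guarantees that all poles lie strictly inside the unit disk and hence there are no singularities for $|z| = 1$. Continuity together with compactness of $[-\pi,\pi]$ ensures that the supremum in the statement is in fact attained. For a square-summable input $u$ with output $v = Ku$, the convolution theorem gives $V(e^{j\omega}) = \mathbf{K}(e^{j\omega}) U(e^{j\omega})$ pointwise on the unit circle, while Parseval's theorem identifies the $\ell_2$-norm of a signal with the $L_2$-norm of its discrete-time Fourier transform,
\[
\|u\|^2 = \frac{1}{2\pi}\int_{-\pi}^{\pi} \|U(e^{j\omega})\|^2 \, d\omega, \qquad \|v\|^2 = \frac{1}{2\pi}\int_{-\pi}^{\pi} \|\mathbf{K}(e^{j\omega}) U(e^{j\omega})\|^2 \, d\omega.
\]

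Second, I would establish the two matching inequalities. For the upper bound $\|K\| \le \mathcal{H}_\infty$, I apply the defining property of the maximum singular value pointwise, $\|\mathbf{K}(e^{j\omega}) U(e^{j\omega})\| \le \sigma_{max}(\mathbf{K}(e^{j\omega}))\,\|U(e^{j\omega})\| \le \mathcal{H}_\infty \|U(e^{j\omega})\|$, and integrate over $\omega$; dividing by $\|u\|^2$ yields $\|v\|/\|u\| \le \mathcal{H}_\infty$ for every nonzero $u$, hence $\|K\| \le \mathcal{H}_\infty$. For the reverse bound $\|K\| \ge \mathcal{H}_\infty$, let $\omega^\star$ attain the supremum and let $\bar u$ be a unit right singular vector of $\mathbf{K}(e^{j\omega^\star})$ associated with $\sigma_{max} = \mathcal{H}_\infty$. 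I would construct, for each $\delta > 0$, an admissible input whose transform $U(e^{j\omega})$ is a narrow bump of width $\delta$ concentrated around $\omega^\star$ (together with a conjugate-symmetric companion bump around $-\omega^\star$, to keep the time-domain signal real) pointing in the direction $\bar u$. By continuity of $\mathbf{K}(\cdot)$, the map $\omega \mapsto \mathbf{K}(e^{j\omega}) \bar u$ is nearly constant of norm close to $\mathcal{H}_\infty$ across this band, so as $\delta \to 0$ the ratio $\|v\|/\|u\|$ tends to $\mathcal{H}_\infty$. Combining the two bounds gives the claimed equality.

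Third, the main obstacle is the lower bound: the extremal ``input'' is a pure tone at frequency $\omega^\star$, which is not square-summable and therefore not an admissible signal, so the supremum cannot be realized exactly and must instead be approached through a limiting narrowband argument. The technical care lies in (i) choosing the bandwidth $\delta$ small enough relative to the modulus of continuity of $\mathbf{K}(e^{j\omega})$ on the unit circle, so that both $\sigma_{max}(\mathbf{K}(e^{j\omega}))$ and the singular direction $\bar u$ vary negligibly across the band, and (ii) enforcing real-valuedness of the constructed signal, which forces the conjugate-symmetric pair of bumps; here the evenness of $\omega \mapsto \sigma_{max}(\mathbf{K}(e^{j\omega}))$ for a real system ensures that both bumps contribute the same gain, so no loss is incurred. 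The upper bound, by contrast, is an immediate consequence of Parseval combined with the variational characterization of $\sigma_{max}$, and requires no such limiting construction.
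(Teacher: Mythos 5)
The paper does not actually prove this statement: it is invoked as a standard fact, with the remark that it ``can be found in any textbook on robust control theory,'' so there is no in-paper argument to compare yours against. Your proof is the standard textbook argument and it is correct: Parseval's identity plus the pointwise variational characterization of $\sigma_{max}$ gives the upper bound $\|K\| \le \mathcal{H}_\infty$, and a narrowband approximation of the extremal sinusoid at the maximizing frequency gives the matching lower bound. Two small points of care, neither of which affects validity. First, in the lower bound you do not actually need the singular \emph{direction} to vary negligibly over the band (singular vectors need not depend continuously on $\omega$ where singular values cross); since your construction fixes the direction $\bar u$ across the whole bump, continuity of $\omega \mapsto \mathbf{K}(e^{j\omega})\bar u$ alone---which follows from stability---already forces $\lVert \mathbf{K}(e^{j\omega})\bar u\rVert \to \sigma_{max}\bigl(\mathbf{K}(e^{j\omega^\star})\bigr)$ as $\omega \to \omega^\star$, which is all you use. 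Second, when $\omega^\star \in \{0,\pi\}$ the two conjugate bumps coincide rather than pair up (use a single bump), and when $\bar u$ is complex the companion bump must carry $\overline{\bar u}$; realness of the system, i.e.\ $\mathbf{K}(e^{-j\omega}) = \overline{\mathbf{K}(e^{j\omega})}$, then guarantees equal gain on both bumps, as you observe. For the purposes of this paper only the scalar case $\mathbf{K}(z) = K(z)\mathbf{I}_d$ is ever used, where $\sigma_{max}$ reduces to $|K(e^{j\omega})|$ and the direction bookkeeping disappears entirely.
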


In particular, the transfer matrix $\mathbf{K}(z)$ for all the optimization methods considered in this work can be expressed as $\mathbf{K}(z) = K(z) \mathbf{I}_d$; as such, applying \Cref{theorem:H-infinity} reduces to determining the maximum magnitude of $K(z)$ over the unit circle in the $z$-plane.

\paragraph{The Circle Criterion.} We will also verify some of our bounds using the so-called \emph{circle criterion} (sometimes referred to as the generalized Nyquist criterion). Specifically, the \emph{Nyquist plot} of a one-dimensional controller with transfer function $K(z)$ is the curve $\mathcal{C} := \{ (a, b) \in \mathbb{R}^2 : a + b j = K(e^{j \omega})\}$. In this context, if $D(a, b)$ represents the closed disk in the complex plane whose diameter coincides with the length of the line segment connecting the points $-1/a + j 0$ and $-1/b + j 0$, we will use the following result:

\begin{restatable}{proposition}{circlecriterion}
    \label{proposition:generalized_Nyquist}
        Consider an operator $F$ which satisfies \Cref{assumption:sector} and \Cref{assumption:unique}. Then, the system is absolutely stable if $K'(z)$ is stable and the Nyquist plot of $K'(e^{j \omega})$ lies in the interior of the disk $D(a, b)$, where $a = (L - \mu)/2 = -b$.
\end{restatable}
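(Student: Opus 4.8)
The plan is to recognize that the geometric condition on the Nyquist plot is merely a visual restatement of the gain bound already appearing in \Cref{theorem:main_theorem}, so that the proposition follows as a corollary of the small gain theorem combined with \Cref{theorem:H-infinity}. The reason we phrase everything in terms of the complementary sensitivity $K'$ (rather than $K$) and obtain a disk symmetric about the origin is exactly the linear shift transformation: it recenters the original sector $[\mu, L]$ into the symmetric sector $[-(L-\mu)/2, (L-\mu)/2]$, which is what forces $a = (L-\mu)/2 = -b$.

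First I would unpack the disk $D(a,b)$. Substituting $a = (L-\mu)/2$ and $b = -(L-\mu)/2$, the two endpoints of its diameter are $-1/a + j0 = -2/(L-\mu)$ and $-1/b + j0 = +2/(L-\mu)$, which are symmetric about the origin on the real axis. Hence $D(a,b)$ is exactly the closed disk centered at the origin of radius $2/(L-\mu)$, and a single point $K'(e^{j\omega})$ lies in its interior if and only if $|K'(e^{j\omega})| < 2/(L-\mu)$. Translating the hypothesis that the entire Nyquist curve $\mathcal{C} = \{ K'(e^{j\omega}) : \omega \in [-\pi,\pi]\}$ lies in the interior of $D(a,b)$ then yields the uniform bound $\sup_{\omega \in [-\pi,\pi]} |K'(e^{j\omega})| < 2/(L-\mu)$. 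Since $\mathbf{K}'(z) = K'(z)\,\mathbf{I}_d$ for the controllers considered here, \Cref{theorem:H-infinity} identifies this supremum with the gain $\norm{K'}$ of the (stable, by assumption) system, so the hypothesis is equivalent to $\norm{K'} < 2/(L-\mu)$.

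Second, I would bridge the boundary case $\rho = 1$ to \Cref{theorem:main_theorem}, which is stated for $\rho \in (0,1)$. Because $K'(z)$ is assumed stable, all its poles lie strictly inside the unit circle, so $K'(\rho z)$ remains stable for $\rho$ in a left neighbourhood of $1$, and its $\mathcal{H}_\infty$-norm depends continuously on $\rho$ with $\norm{K'_\rho} \to \norm{K'}$ as $\rho \to 1^-$. As the bound $\norm{K'} < 2/(L-\mu)$ is strict, there exists some $\rho < 1$ for which $\norm{K'_\rho} < 2/(L-\mu)$ still holds; invoking \Cref{theorem:main_theorem} then yields linear convergence with rate $\rho$ to the unique fixed point, and in particular absolute stability of the feedback interconnection under every operator satisfying \Cref{assumption:sector} and \Cref{assumption:unique}.

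The main obstacle I anticipate is precisely this last continuity step: one must justify that shrinking the evaluation radius from the unit circle to a slightly smaller circle of radius $\rho$ preserves both stability and the strict gain inequality. This is where stability of $K'$ is essential, since it guarantees that $K'(\rho z)$ is well-defined and analytic on a neighbourhood of the closed unit disk, so that $\sup_{|z|=1}|K'(\rho z)|$ varies continuously in $\rho$ and converges to $\sup_{|z|=1}|K'(z)|$; the strictness of the disk-containment hypothesis supplies exactly the slack needed to remain below $2/(L-\mu)$. Everything else reduces to the elementary disk computation and the already-established equivalence between the $\ell_2$-gain and the $\mathcal{H}_\infty$-norm.
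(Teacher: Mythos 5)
Your argument is correct, but it takes a genuinely different route from the paper. The paper does not derive this proposition from the small gain theorem at all: it treats it as a direct instantiation of the classical circle criterion, which it cites from Khalil in the appendix for a general sector $[a,b]$ with $a<0<b$, applied to the shifted nonlinearity $P'=P-(L+\mu)/2$; by \Cref{lemma:P-gain} that shifted nonlinearity lies in the symmetric sector $[-(L-\mu)/2,(L-\mu)/2]$, and the disk $D(a,b)$ with $a=(L-\mu)/2=-b$ is simply what the cited criterion produces for that sector (note $D(a,b)=D(b,a)$, so the sign convention is immaterial). You instead observe that for this origin-centered disk of radius $2/(L-\mu)$ the containment of the Nyquist plot is exactly the uniform bound $\sup_{\omega}|K'(e^{j\omega})|<2/(L-\mu)$, identify that supremum with the gain via \Cref{theorem:H-infinity}, and then transfer the strict inequality from the unit circle to a circle of radius $\rho<1$ by a continuity argument so that \Cref{theorem:main_theorem} applies. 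Both are sound. Your route has the virtue of being self-contained in the machinery the paper already develops, of yielding something quantitatively stronger (linear convergence at some explicit rate $\rho<1$ for every operator in the sector, rather than bare asymptotic stability), and of making transparent the well-known fact that the circle criterion collapses to the small gain theorem precisely when the sector is symmetric; the paper's route, by contrast, extends to asymmetric sectors where the disk is off-center and the geometric condition is no longer a gain bound. One small caveat for your version: \Cref{theorem:main_theorem} additionally requires $\mathbf{K}(z)$ to be strictly proper (for well-posedness of the interconnection), a hypothesis the proposition leaves implicit and which your invocation silently inherits; it would be cleaner to state it.
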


Recall that a rational function $K'(z)$ is said to be stable if the denominator constitutes a \emph{stable polynomial}, i.e. all of its roots lie within the unit circle in the $z$-plane. We give a comprehensive overview of the control-theoretic preliminaries required to establish these claims, as well as more rigorous definitions in \Cref{section:background}.

\section{Analysis of First-Order Methods}
\label{section:analysis}

In this section we employ the control-theoretic framework presented in \Cref{section:preliminaries} in order to analyze a series of optimization algorithms. As a warm-up, we commence with the analysis of Gradient Descent, which previously appeared in \cite{hu2017control}. A reader familiar with these techniques is welcome to skip the forthcoming subsection.

\subsection{Warm-up: Gradient Descent}

The purpose of this subsection is to illustrate in detail the basic steps required for the analysis of an optimization method using the framework introduced in the background section. Specifically, we will focus on perhaps the simplest optimization method, namely Gradient Descent (henceforth $\gd$), which boils down to the following update rule:

\begin{equation}
    \label{eq:GD}
    x_{k+1} = x_k - \eta  F(x_k),
\end{equation}
for some constant $\eta > 0$, typically referred to as the \emph{learning rate}. The first step of the analysis consists of casting the $\gd$ update rule in the form of a feedback interconnection, as follows: 

\begin{subequations}
\label{eq:dynamical_system-GD}
\begin{align}
    \xi_{k+1} &= \xi_k - \eta v_k \label{eq:GD-a} \\
    u_k &= \xi_k \label{eq:GD-b} \\
    v_k &= F(u_k + x^*). \label{eq:GD-c}
\end{align}
\end{subequations}

Specifically, note that \eqref{eq:GD-a} and \eqref{eq:GD-b} capture the LTI system $K$ (the controller), while \eqref{eq:GD-c} corresponds to the nonlinearity $P$. Indeed, this follows by directly substituting $\xi_k = x_k - x^*$. Now if we consider the input/output relation of the induced controller $K$, it follows that $u_k = \xi_k = \xi_{k-1} - \eta v_{k-1} = u_{k-1} - \eta v_{k-1}$. Thus, transforming to the $z$-space yields that $U(z) = z^{-1} U(z) - \eta z^{-1} V(z)$, or equivalently, 

\begin{equation}
    U(z) = - \frac{\eta}{z - 1} V(z).
\end{equation}
As a result, we have shown the following:

\begin{proposition}
    \label{proposition:GD-TF}
The transfer matrix of the Gradient Descent controller can be expressed as $\mathbf{K}(z) = K(z) \mathbf{I}_d $, where $K(z) = -\eta/(z-1)$.
\end{proposition}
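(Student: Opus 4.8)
The plan is to extract the input/output behavior of the controller block in isolation and transfer it to the frequency domain. First I would isolate the LTI portion of the feedback interconnection, namely \eqref{eq:GD-a} and \eqref{eq:GD-b}, which relate the internal state $\xi_k$, the input $v_k$, and the output $u_k$; the nonlinearity \eqref{eq:GD-c} plays no role in determining $K$ and is deliberately set aside. Eliminating the internal state is the crucial bookkeeping step: since $u_k = \xi_k$, I can feed the state recursion $\xi_{k+1} = \xi_k - \eta v_k$ into the output equation to obtain a closed recurrence purely in terms of the input and output, $u_k = u_{k-1} - \eta v_{k-1}$.

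Next I would apply the bilateral $Z$-transform to this recurrence, invoking the time-delay property $\mathcal{Z}\{u_{k-r}\} = z^{-r} U(z)$ recorded in the preliminaries. This converts the scalar recurrence into the algebraic identity $U(z) = z^{-1} U(z) - \eta z^{-1} V(z)$. Solving for $U(z)$—collecting the $U(z)$ terms and multiplying through by $z$—yields $U(z) = -\frac{\eta}{z-1} V(z)$, so that the scalar transfer function is $K(z) = -\eta/(z-1)$.

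Finally I would promote this scalar relation to the matrix-valued claim. Since the learning rate $\eta$ is a scalar multiplying the entire update vector, the LTI dynamics act identically and independently on each of the $d$ coordinates, with all coupling between coordinates confined to the nonlinearity $P$. Hence the controller's transfer matrix is diagonal with common entry $K(z)$, i.e. $\mathbf{K}(z) = K(z)\mathbf{I}_d$, as asserted.

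I do not anticipate a genuine obstacle, as the derivation is a direct computation; the only point that warrants care is the ordering of the two manipulations, since one could instead transform \eqref{eq:GD-a} and \eqref{eq:GD-b} separately and only then eliminate $\Xi(z)$, and I would want to confirm that this produces the identical rational function (it does). I would also remark, looking ahead, that $\mathbf{K}(z) = -\frac{\eta}{z-1}\mathbf{I}_d$ satisfies $\mathbf{K}(\infty) = 0$ and is therefore strictly proper, which is precisely the well-posedness hypothesis invoked when applying \Cref{theorem:main_theorem}.
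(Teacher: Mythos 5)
Your derivation is correct and matches the paper's argument exactly: eliminate the state via $u_k = \xi_k$ to obtain the recurrence $u_k = u_{k-1} - \eta v_{k-1}$, apply the time-delay property of the $Z$-transform to get $U(z) = z^{-1}U(z) - \eta z^{-1}V(z)$, and solve for $U(z)/V(z)$. The added remarks on the diagonal structure of $\mathbf{K}(z)$ and strict properness are consistent with how the paper subsequently uses the result.
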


This expression for the transfer function will be used directly to derive the gain of the controller via \Cref{theorem:H-infinity}. In particular, we can invoke \Cref{theorem:main_theorem} to derive linear convergence for the $\gd$ method along two well-known regimes:

\begin{theorem}[\cite{hu2017control}]
    Consider an operator $F$ which satisfies \Cref{assumption:sector} and \Cref{assumption:unique}. Then, if $\eta = 2/(\mu + L)$, Gradient Descent converges linearly to the fixed point of $F$ with rate $\rho$, for any $\rho > (L - \mu)/(L + \mu)$.
\end{theorem}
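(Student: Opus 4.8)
The plan is to follow the three-step skeleton outlined for the illustrative example, instantiating the small gain theorem (\Cref{theorem:main_theorem}) with the transfer function supplied by \Cref{proposition:GD-TF}. First I would take $K(z) = -\eta/(z-1)$ and form the complementary sensitivity function $K'(z) \define K(z)/(1 - h K(z))$ with $h = (L+\mu)/2$. The key simplification exploits the choice $\eta = 2/(\mu + L) = h^{-1}$, so that $h\eta = 1$ and hence $h K(z) = -1/(z-1)$. Then $1 - h K(z) = z/(z-1)$, the factors of $(z-1)$ cancel, and one is left with the remarkably clean expression $K'(z) = -\eta/z$.

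Next I would verify the structural hypotheses of \Cref{theorem:main_theorem} and compute the gain. Strict properness of $\mathbf{K}(z) = K(z)\mathbf{I}_d$ is immediate since $K(\infty) = 0$. For stability, note that $K'(\rho z) = -\eta/(\rho z)$ has its only pole at $z = 0$, which lies strictly inside the unit disk for every $\rho \in (0,1)$, so $K'(\rho z)$ is stable. To obtain the gain, I would invoke \Cref{theorem:H-infinity}: since the transfer matrix is a scalar multiple of the identity, the $\mathcal{H}_{\infty}$-norm reduces to the maximum magnitude of $K'(\rho z)$ over the unit circle. On $|z| = 1$ we have $|K'(\rho z)| = \eta/(\rho |z|) = \eta/\rho$, a constant, whence $||K'_{\rho}|| = \eta/\rho$.

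Finally, I would feed this into the small gain condition $||K'_{\rho}|| < 2/(L-\mu)$. Substituting $\eta = 2/(L+\mu)$ yields $\frac{2}{\rho(L+\mu)} < \frac{2}{L-\mu}$, which rearranges to $\rho > (L-\mu)/(L+\mu)$. Because this threshold is strictly below $1$, every such $\rho$ is admissible in $(0,1)$, and \Cref{theorem:main_theorem} then certifies linear convergence at rate $\rho$, as claimed. I do not expect a genuine obstacle here: the substantive difficulty has been absorbed into the cited small gain machinery, and the only two steps demanding care are the algebraic cancellation producing $K'(z) = -\eta/z$ (which hinges crucially on the calibration $\eta = h^{-1}$) and the reduction of the matrix $\mathcal{H}_{\infty}$-norm to a scalar magnitude via \Cref{theorem:H-infinity}.
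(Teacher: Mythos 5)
Your proposal is correct and follows essentially the same route as the paper's proof: both exploit the calibration $\eta = h^{-1}$ to collapse the complementary sensitivity function to $K'(z) = -\eta/z$, observe stability of $K'(\rho z)$ for all $\rho \in (0,1)$, compute the gain $\eta/\rho$ via \Cref{theorem:H-infinity}, and read off the rate condition from the small gain bound. No gaps.
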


\begin{proof}
First, notice that the controller associated with $\gd$ is indeed strictly proper. Moreover, the transfer matrix $\mathbf{K}'(z)$ after applying the linear shift transformation can be expressed as 

\begin{equation*}
    \mathbf{K}'(z) =  \frac{K(z)}{1 - h K(z)} \mathbf{I}_d,
\end{equation*}
where $h = (L + \mu)/2$ and $K(z) = -\eta/(z-1)$. In particular, for $\eta = 2/(L + \mu) = h^{-1} $ it follows that 

\begin{equation*}
    K'(\rho z) = - \frac{\eta}{\rho z-1 + h \eta} = - \frac{\eta}{\rho z}.
\end{equation*}
Thus, the transfer function $K'(\rho z)$ is always stable, while \Cref{theorem:H-infinity} implies that $||K'_{\rho} || = \eta/\rho$. Consequently, \Cref{theorem:main_theorem} tells us that Gradient Descent exhibits linear convergence with rate $\rho$ as long as $\rho > (L - \mu)/(L + \mu)$, completing the proof.
\end{proof}

\begin{theorem}[\cite{hu2017control}]
    Consider an operator $F$ which satisfies \Cref{assumption:sector} and \Cref{assumption:unique}. Then, if $\eta = 1/L$, the Gradient Descent method converges linearly to the fixed point of $F$ with rate $\rho$, for any $\rho > 1 - \kappa^{-1}$.
\end{theorem}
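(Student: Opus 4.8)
The plan is to follow exactly the three-step skeleton used in the $\eta = 2/(L+\mu)$ case, the only difference being that here the complementary sensitivity function will not collapse to a pure pole at the origin. Starting from \Cref{proposition:GD-TF} with $\eta = 1/L$, I would first apply the linear shift transformation to form $K'(z) = K(z)/(1 - hK(z))$ with $h = (L+\mu)/2$. Substituting $K(z) = -1/(L(z-1))$ and clearing the common factor $(z-1)$ should reduce $K'(z)$ to the first-order rational function
\begin{equation*}
    K'(z) = \frac{-2}{2Lz - (L - \mu)},
\end{equation*}
whose single pole sits at $z = (L-\mu)/(2L)$. The strict properness of $\mathbf{K}(z)$ is inherited directly from \Cref{proposition:GD-TF}.

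Second, I would verify stability of $K'(\rho z)$. Replacing $z$ by $\rho z$ moves the pole to $z = (L-\mu)/(2L\rho)$, so $K'(\rho z)$ is stable precisely when $(L-\mu)/(2L\rho) < 1$, i.e. $\rho > (L-\mu)/(2L)$. Since the target range $\rho > 1 - \kappa^{-1} = (L-\mu)/L$ is strictly contained in this region, stability will hold throughout the relevant interval of rates.

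Third --- the one genuinely computational step --- I would determine the gain $\|K'_\rho\|$ via \Cref{theorem:H-infinity}, i.e. by maximizing $|K'(\rho z)|$ over $|z| = 1$. This amounts to minimizing $|2L\rho e^{j\omega} - (L-\mu)|$, the distance from the fixed real point $L-\mu$ to the circle of radius $2L\rho$ centered at the origin. Since $|2L\rho e^{j\omega} - (L-\mu)|^2 = (2L\rho)^2 - 2(2L\rho)(L-\mu)\cos\omega + (L-\mu)^2$ is decreasing in $\cos\omega$, the minimum is attained at $\omega = 0$, giving $\|K'_\rho\| = 2/|2L\rho - (L-\mu)|$. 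Imposing the small-gain condition $\|K'_\rho\| < 2/(L-\mu)$ then reduces to $|2L\rho - (L-\mu)| > L - \mu$, which on the stable range (where $2L\rho - (L-\mu) > 0$) is equivalent to $\rho > (L-\mu)/L = 1 - \kappa^{-1}$. An appeal to \Cref{theorem:main_theorem} then delivers the claimed linear rate. I expect no real obstacle here: the only point requiring a little care is confirming that the extremum of the denominator over the unit circle occurs at $\omega = 0$ rather than blindly evaluating at $z = 1$, which the monotonicity in $\cos\omega$ settles cleanly.
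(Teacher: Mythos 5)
Your proposal is correct and follows essentially the same route as the paper: apply the linear shift transformation to get a first-order $K'(\rho z)$ with pole at $(L-\mu)/(2L\rho)$, read off stability for $\rho > (1-\kappa^{-1})/2$, compute $\|K'_\rho\| = 2/(2L\rho - (L-\mu))$ (identical to the paper's $\frac{1}{L}\cdot\frac{1}{\rho - \frac{1}{2}(1-\kappa^{-1})}$), and impose the small-gain condition to recover $\rho > 1-\kappa^{-1}$. The only difference is cosmetic: you justify explicitly that the supremum over the unit circle is attained at $\omega = 0$ via monotonicity in $\cos\omega$, a step the paper leaves implicit.
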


\begin{proof}
Similarly to the previous proof, for $\eta := 1/L$ we obtain that 

\begin{equation*}
    K'(\rho z) = - \frac{\eta}{\rho z - \frac{1}{2}(1 - \kappa^{-1})}.
\end{equation*}

Therefore, it follows that $K'(\rho z)$ is stable for all $\rho > (1 - \kappa^{-1})/2$. Moreover, \Cref{theorem:H-infinity} implies that

\begin{equation*}
    || K'_{\rho} || = \frac{1}{L} \sup_{\omega \in [-\pi, \pi]} \frac{1}{ | \rho e^{j \omega} - \frac{1}{2}(1 - \kappa^{-1}) |} = \frac{1}{L} \frac{1}{\rho - \frac{1}{2}(1 - \kappa^{-1})}.
\end{equation*}
Thus, $\gd$ converges with linear rate $\rho$ as long as  

\begin{equation*}
    \frac{1 - \kappa^{-1}}{2 \rho - (1 - \kappa^{-1})} < 1 \iff \rho > 1 - \kappa^{-1}.
\end{equation*}

\end{proof}

\subsection{Optimistic Gradient Descent}

Next, we analyze the Optimistic Gradient Descent ($\ogd$) method, a variant of $\gd$ which augments the ``memory'' of the algorithm with an additional past-gradient. Specifically, the update rule of $\ogd$ boils down to the following equation:

\begin{equation}
    \label{eq:OGD}
    x_{k+1} = x_k - 2\eta F(x_{k}) + \eta F(x_{k-1}),
\end{equation}
while the associated controller can be expressed as follows:

\begin{subequations}
\label{eq:OGD-normal_form}
\begin{align}
    \begin{bmatrix}
    \xi_{k+1}^{(1)} \\
    \xi_{k+1}^{(2)}
    \end{bmatrix}
    &=
    \begin{bmatrix}
    \mathbf{I}_d & \eta \mathbf{I}_d \\
    \mathbf{0}_d & \mathbf{0}_d
    \end{bmatrix}
    \begin{bmatrix}
    \xi_{k}^{(1)} \\
    \xi_{k}^{(2)}
    \end{bmatrix}
    +
    \begin{bmatrix}
    -2\eta \mathbf{I}_d \\
    \mathbf{I}_d
    \end{bmatrix}
    v_k; \label{eq:OGD-a}
    \\
    u_k &=
    \begin{bmatrix}
    \mathbf{I}_d & \mathbf{0}_d
    \end{bmatrix}
    \begin{bmatrix}
    \xi_{k}^{(1)} \\
    \xi_{k}^{(2)}
    \end{bmatrix},
\end{align}
\end{subequations}
where $\xi_k^{(1)}, \xi_{k}^{(2)} \in \mathbb{R}^d$ represent the state variables of the controller. To verify this claim notice that from \Cref{eq:OGD-a} it follows that $\xi_{k+1}^{(1)} = \xi_k^{(1)} - 2\eta v_k + \eta v_{k-1}$, and it suffices to substitute $x_k = \xi_k^{(1)} + x^*$ to obtain the update rule of $\ogd$ \eqref{eq:OGD}; recall that $v_k = F(u_k + x^*)$. From this formulation the following observation is immediate:

\begin{proposition}
    \label{proposition:TF-OGD}
    The transfer matrix of the Optimistic Gradient Descent controller can be expressed as $\mathbf{K}(z) = K(z) \mathbf{I}_d$, where $K(z) = \eta (1-2z)/(z^2-z)$.
\end{proposition}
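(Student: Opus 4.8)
The plan is to follow exactly the recipe used for Gradient Descent in \Cref{proposition:GD-TF}: extract the scalar input/output relation of the controller from the state-space description \eqref{eq:OGD-normal_form}, transform it to the $z$-domain via the time-delay property $\mathcal{Z}\{u_{k-r}\} = z^{-r}U(z)$, and then read off the transfer function. Since every block appearing in \eqref{eq:OGD-normal_form} is a scalar multiple of $\mathbf{I}_d$, the computation decouples across coordinates, so it suffices to track scalar signals and conclude $\mathbf{K}(z) = K(z)\mathbf{I}_d$ at the end.

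First I would unpack the two rows of \eqref{eq:OGD-a}. The second row reads $\xi_{k+1}^{(2)} = v_k$, so that $\xi_k^{(2)} = v_{k-1}$; this is precisely the mechanism by which the controller buffers the \emph{past} gradient. The first row then gives $\xi_{k+1}^{(1)} = \xi_k^{(1)} + \eta\xi_k^{(2)} - 2\eta v_k$, and substituting $\xi_k^{(2)} = v_{k-1}$ together with the output relation $u_k = \xi_k^{(1)}$ collapses everything into the single scalar recurrence $u_{k+1} = u_k - 2\eta v_k + \eta v_{k-1}$, or equivalently $u_k = u_{k-1} - 2\eta v_{k-1} + \eta v_{k-2}$ after a shift of the index.

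Next I would apply the $Z$-transform term by term. The time-delay property yields $U(z) = z^{-1}U(z) - 2\eta z^{-1}V(z) + \eta z^{-2}V(z)$; collecting the $U(z)$ contributions and clearing denominators isolates $U(z) = \frac{\eta(1-2z)}{z^2-z}V(z)$, which identifies $K(z) = \eta(1-2z)/(z^2-z)$, as claimed. As an independent cross-check I would instead invoke the standard formula $K(z) = C(z\mathbf{I}-A)^{-1}B$ for the state matrices $(A,B,C)$ of \eqref{eq:OGD-normal_form}: since $z\mathbf{I}-A$ is block upper-triangular its inverse is immediate, and carrying out the product reproduces the same rational function.

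No genuine obstacle arises here—the statement is essentially a direct translation of the dynamics into the frequency domain, which is why it can be regarded as immediate. The only point demanding care is the bookkeeping of the two state variables together with the index shift, and in particular recognizing that $\xi^{(2)}$ merely stores $v_{k-1}$; mishandling this single delay is the one place where a sign error or an off-by-one in the power of $z$ could slip in.
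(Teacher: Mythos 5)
Your proof is correct and follows essentially the same route as the paper: you extract the scalar recurrence $u_k = u_{k-1} - 2\eta v_{k-1} + \eta v_{k-2}$ from \eqref{eq:OGD-normal_form} (correctly identifying $\xi_k^{(2)} = v_{k-1}$) and apply the time-delay property of the $Z$-transform to obtain $K(z) = \eta(1-2z)/(z^2-z)$, exactly as in the paper's argument. The suggested cross-check via $\mathbf{C}(z\mathbf{I}-\mathbf{A})^{-1}\mathbf{B}$ is a sensible but inessential addition.
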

\begin{proof}
Let $(v, u)$ be the input/output pair of the $\ogd$ controller. \Cref{eq:OGD-normal_form} implies that $u_k = \xi_k^{(1)} = \xi_{k-1}^{(1)} + \eta v_{k-2} - 2\eta v_{k-1} \implies u_k = u_{k-1} - 2\eta v_{k-1} + \eta v_{k-2}$. Thus, taking the $z$-transform gives that $U(z) = z^{-1} U(z) - 2\eta z^{-1} V(z) + \eta z^{-2} V(z)$, or equivalently,

\begin{equation}
    U(z) = \eta \frac{1 - 2z}{z^2 - z} V(z).
\end{equation}
\end{proof}

Next, having derived a succinct representation for the controller of $\ogd$, we are ready to characterize the linear convergence of $\ogd$:

\begin{theorem}
    \label{theorem:OGD}
    Consider an operator $F$ which satisfies \Cref{assumption:sector} and \Cref{assumption:unique}. Then, if $\eta = \frac{2}{3L}(1-\epsilon)$, for some $\epsilon \in (0,1)$, Optimistic Gradient Descent \eqref{eq:OGD} converges linearly to the fixed point of $F$ with rate $\rho = 1 - \frac{2}{3} \epsilon(1-\epsilon)\kappa^{-1}$.
\end{theorem}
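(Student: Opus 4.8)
The plan is to follow exactly the skeleton used for $\gd$ in the warm-up: first compute the complementary sensitivity function $K'(z)$, then verify stability of $K'(\rho z)$ by locating its poles, and finally bound its gain via \Cref{theorem:H-infinity} so as to meet the threshold $2/(L-\mu)$ of \Cref{theorem:main_theorem}. Strict properness of $\mathbf{K}(z)$ is immediate from \Cref{proposition:TF-OGD}, since the numerator of $K(z) = \eta(1-2z)/(z^2-z)$ has degree strictly smaller than its denominator. Using $h = (L+\mu)/2$, a direct computation gives
\begin{equation*}
    K'(z) = \frac{K(z)}{1 - h K(z)} = \frac{\eta(1-2z)}{z^2 + (2h\eta - 1)z - h\eta},
\end{equation*}
so that $K'(\rho z)$ has denominator $\rho^2 z^2 + (2h\eta-1)\rho z - h\eta$.

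For the stability step I would observe that the poles of $K'(\rho z)$ are $z_{\pm}/\rho$, where $z_{\pm} = \tfrac12\big((1-2h\eta) \pm \sqrt{4h^2\eta^2+1}\big)$ are the roots of the original denominator; here the discriminant simplifies because $(2h\eta-1)^2 + 4h\eta = 4h^2\eta^2 + 1$. Substituting $\eta = \frac{2}{3L}(1-\epsilon)$ yields $h\eta = \frac{1}{3}(1+\kappa^{-1})(1-\epsilon)$, and it then remains to check the elementary inequality $\max\{|z_+|,|z_-|\} < \rho$ for $\rho = 1 - \frac{2}{3}\epsilon(1-\epsilon)\kappa^{-1}$. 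Since $z_+ > 0 > z_-$ with $z_+ - |z_-| = 1 - 2h\eta$, this is a routine verification that the dominant root stays strictly inside the disk of radius $\rho$; I expect stability to hold with room to spare, so it will not be the binding constraint.

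The substance of the proof is the gain computation. By \Cref{theorem:H-infinity}, $\|K'_\rho\| = \sup_{\omega \in [-\pi,\pi]} |K'(\rho e^{j\omega})|$, and squaring and rationalizing turns $|K'(\rho e^{j\omega})|^2$ into a ratio of trigonometric polynomials of the form $\psi(\omega) = \frac{a_0 + a_1\cos\omega}{b_0 + b_1\cos\omega + b_2\cos 2\omega}$, with coefficients explicit in $\rho, \kappa^{-1}, \eta$. The key claim—and the main obstacle—is that $\psi$ attains its maximum over $[-\pi,\pi]$ at one of the endpoints $\omega \in \{0,\pi\}$. Writing $c = \cos\omega \in [-1,1]$ and using $\cos 2\omega = 2c^2-1$, this reduces to showing that $c \mapsto (a_0 + a_1 c)/\big((b_0-b_2) + b_1 c + 2b_2 c^2\big)$ has no interior maximizer on $(-1,1)$; I would establish this by examining the sign of its derivative, which is controlled by a low-degree polynomial in $c$, and arguing that any interior critical point fails to be a maximizer.

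Once the maximum is localized at the endpoints, it remains to verify $|K'(\pm\rho)| < 2/(L-\mu)$. The binding endpoint is $\omega = \pi$: at $\epsilon = 0$ and $\rho = 1$ one computes $|K'(-1)| = 2/(L-\mu)$, meeting the threshold with equality, whereas $|K'(1)| = 2/(L+\mu)$ is strictly smaller. The role of the slack parameter $\epsilon$—together with the reduced radius $\rho < 1$—is precisely to turn the binding inequality strict, and matching the $\omega=\pi$ condition to the prescribed $\rho$ is what produces the stated rate. With both the stability and the gain conditions in hand, \Cref{theorem:main_theorem} yields linear convergence of $\ogd$ at rate $\rho$, completing the proof.
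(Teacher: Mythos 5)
Your proposal follows essentially the same route as the paper's proof: the same complementary sensitivity function, the same pole analysis for stability, the same reduction of the $\mathcal{H}_\infty$ gain to a trigonometric rational function $\psi(\omega)$ maximized at $\omega \in \{0,\pi\}$, and the same endpoint checks against the threshold $2/(L-\mu)$, with the elementary inequalities deferred at the same level of detail as the paper's auxiliary lemmas. Your observation that $|K'(-1)| = 2/(L-\mu)$ holds with equality at $\epsilon = 0$, $\rho = 1$ is a nice addition that correctly identifies the gain condition at $\omega = \pi$ as the binding constraint and explains the tightness of the $2/(3L)$ threshold, but it does not change the argument.
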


\begin{proof}
First, we investigate the stability of the complementary sensitivity function of the OGD controller. For convenience, let us denote with $\lambda = \kappa^{-1} \in (0,1)$. For $\eta = \frac{2}{3L}(1-\epsilon)$ we obtain that 

\begin{equation*}
    K'(\rho z) = \eta \frac{1 - 2\rho z}{\rho^2 z^2 + \frac{2\lambda - 2\epsilon \lambda - 2 \epsilon - 1}{3} \rho z - \frac{(1-\epsilon)(1 + \lambda)}{3}}.
\end{equation*}
Let $z_0, z_1$ be the roots of the characteristic equation 

\begin{equation}
    \label{eq:quadr}
 z^2 + \frac{2\lambda - 2\epsilon \lambda - 2 \epsilon - 1}{3} z - \frac{(1-\epsilon)(1 + \lambda)}{3} = 0.   
\end{equation}

Then, it follows that $K'(\rho z)$ is stable if $\rho > \max \{|z_0|, |z_1| \}$. Moreover, solving the quadratic equation \eqref{eq:quadr} yields 

\begin{equation*}
    z_{0, 1} = \frac{1}{6} \left( - 2\lambda + 2\epsilon \lambda + 2\epsilon + 1 \pm \sqrt{(-2\lambda + 2\epsilon \lambda + 2\epsilon + 1)^2 + 12(1-\epsilon)(1 + \lambda)} \right).
\end{equation*}

Next, we will use the following simple lemma in order to bound the magnitudes of the solutions to the characteristic equation:

\begin{lemma}
    \label{lemma:aux-OGD}
    For any $\epsilon \in (0,1)$ and $\lambda \in (0,1)$, the following inequalities hold:
    
    \begin{equation*}
        \sqrt{(-2\lambda + 2\epsilon \lambda + 2\epsilon + 1)^2 + 12(1-\epsilon)(1 + \lambda)} < 2 \lambda - 2\epsilon \lambda - 2 \epsilon + 5 - 4 \epsilon (1 - \epsilon) \lambda;
    \end{equation*}
    
    \begin{equation*}
        \sqrt{(-2\lambda + 2\epsilon \lambda + 2\epsilon + 1)^2 + 12(1-\epsilon)(1 + \lambda)} < - 2 \lambda + 2\epsilon \lambda + 2 \epsilon + 7 - 4 \epsilon (1 - \epsilon) \lambda.
    \end{equation*}
\end{lemma}

In particular, it is easy to see that this lemma implies that $|z_0| < 1 - \frac{2}{3} \epsilon (1 - \epsilon) \lambda$ and $|z_1| < 1 - \frac{2}{3} \epsilon (1 - \epsilon) \lambda$. Therefore, it suffices to take $\rho \geq 1 - \frac{2}{3} \epsilon (1 - \epsilon) \lambda$ to ensure that $K'(\rho z)$ is stable. The next step is to bound the gain of $K'_{\rho}$. To this end, \Cref{theorem:H-infinity} implies that 

\begin{equation*}
    ||K'_{\rho}|| = \eta  \sup_{\omega \in [-\pi, \pi]} \left| \frac{ 1 - 2 \rho e^{j \omega }}{\rho^2 e^{2j \omega} + \frac{2 \lambda - 2 \epsilon \lambda - 2 \epsilon - 1}{3} \rho e^{j \omega} - \frac{(1 - \epsilon)(1 + \lambda)}{3}} \right|.
\end{equation*}
Moreover, simple calculations yield that 

\begin{equation}
    \label{eq:psi}
    \left| \frac{ 1 - 2 \rho e^{j \omega }}{\rho^2 e^{2j \omega} + \frac{2 \lambda - 2 \epsilon \lambda - 2 \epsilon - 1}{3} \rho e^{j \omega} - \frac{(1 - \epsilon)(1 + \lambda)}{3}} \right|^2 = \frac{a_0 + a_1 \cos(\omega)}{b_0 + b_1 \cos(\omega) + b_2 \cos(2\omega)} \define \psi(\omega),
\end{equation}
where
\begin{subequations}
\begin{align*}
    a_0 &= 1 + 4\rho^2; \\
    a_1 &= - 4 \rho; \\
    b_0 &= \rho^4 + \frac{(1-\epsilon)^2 (1 + \lambda)^2}{9} + \frac{(2\lambda - 2 \epsilon \lambda - 2 \epsilon - 1)^2}{9}; \\
    b_1 &= \frac{2}{3} (2\lambda - 2 \epsilon \lambda - 2 \epsilon - 1) \left( \rho^2 - \frac{(1 - \epsilon)(1 + \lambda)}{3} \right); \\
    b_2 &= - \frac{2}{3} \rho^2 (1 - \epsilon) (1 + \lambda).
\end{align*}
\end{subequations}

\begin{lemma}
    For $\rho = 1 - \frac{2}{3} \epsilon (1 - \epsilon) \lambda$, the function $\psi(\omega)$ as defined in \eqref{eq:psi} attains its maximum either on $\omega = 0$ or $\omega = \pi$.
\end{lemma}

This claim can be verified using elementary calculus. As a result, this lemma implies that for $\rho = 1 - \frac{2}{3} \epsilon(1 - \epsilon) \lambda$,

\begin{equation}
    \label{eq:K-max}
    || K'_{\rho} || = \max \{  | K'(\rho) |, | K'(-\rho) | \}.
\end{equation}

Finally, to verify that the condition of \Cref{theorem:main_theorem} regarding the gain of the operators is met, we will use the following lemma: 

\begin{lemma}
    Let $\lambda \in (0,1)$ and $\epsilon \in (0,1)$. For $\rho = 1 - \frac{2}{3} \epsilon(1-\epsilon) \lambda$ the following inequalities hold:
    
    \begin{equation}
        \label{eq:OGD-b_1}
        |(1 - 2\rho)|(1-\epsilon)(1 - \lambda) < 3 \left| \rho^2 + \rho \frac{2 \lambda - 2 \epsilon \lambda - 2 \epsilon - 1}{3} - \frac{(1 - \epsilon)(1 + \lambda)}{3} \right|; 
    \end{equation}
    
    \begin{equation}
        \label{eq:OGD-b_2}
        |(1 + 2\rho)|(1-\epsilon)(1 - \lambda) < 3 \left| \rho^2 - \rho \frac{2 \lambda - 2 \epsilon \lambda - 2 \epsilon - 1}{3} - \frac{(1 - \epsilon)(1 + \lambda)}{3} \right|. 
    \end{equation}
\end{lemma}

Therefore, \eqref{eq:OGD-b_1} implies that $ |K'(\rho)| (L - \mu)/2 < 1 $, while \eqref{eq:OGD-b_2} gives that $|K'(-\rho)| (L - \mu)/2 < 1$. As a result, \eqref{eq:K-max} implies that $|| K'_{\rho} || < 2/(L - \mu)$, for $\rho = 1 - \frac{2}{3} \epsilon (1 - \epsilon) \lambda $, and the theorem follows from \Cref{theorem:main_theorem}.
\end{proof}

Importantly, the bound we obtained with respect to the learning rate is tight. That is, there exist operators satisfying \Cref{assumption:sector} and \Cref{assumption:unique} such that for $\eta > 2/(3L)$ the $\ogd$ dynamics are unstable. Indeed, this is shown in the following proposition:

\begin{restatable}{proposition}{tightness}
    \label{proposition:OGD-tightness}
    Consider the function $f: \mathbb{R} \ni x \mapsto x^2 - \cos(x)$. For any $\eta > 2/9$, Optimistic Gradient Descent diverges under any initial state such that $(x_0, x_{-1}) \neq (0,0)$. 
\end{restatable}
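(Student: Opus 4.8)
The plan is to reduce the claim to a spectral-instability statement about the linearization of the $\ogd$ map at its fixed point, with the threshold matching the learning rate $2/(3L)$ from \Cref{theorem:OGD}. First I would identify the operator: since $f$ is to be minimized, $F(x) = f'(x) = 2x + \sin x$, so $F'(x) = 2 + \cos x \in [1,3]$ for all $x$. This shows $F$ is $1$-monotone and, being the gradient of a convex function with $3$-Lipschitz derivative, co-coercive with parameter $1/3$; hence \Cref{assumption:sector} holds with $\mu = 1$ and $L = 3$, and since $F(x) = 0 \iff x = 0$, \Cref{assumption:unique} holds with $x^* = 0$. Thus the learning-rate threshold of \Cref{theorem:OGD} is $2/(3L) = 2/9$, and the goal is to certify that any $\eta > 2/9$ destroys convergence.

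Next I would linearize the update \eqref{eq:OGD} around $x^* = 0$. Writing $F(x) = 3x - g(x)$ with $g(x) := x - \sin x$ (so $g(0)=0$, $g'(x) = 1 - \cos x$, and $g$ is higher-order near the origin), the dynamics become $x_{k+1} = (1 - 6\eta) x_k + 3\eta x_{k-1} + 2\eta\, g(x_k) - \eta\, g(x_{k-1})$, i.e.\ a linear recurrence plus a perturbation negligible near $0$. The associated characteristic polynomial is $p(z) = z^2 - (1 - 6\eta) z - 3\eta$. I would then read off its roots: the discriminant equals $(1-6\eta)^2 + 12\eta = 1 + 36\eta^2 > 0$, so the two roots $z_-, z_+$ are real and distinct; their product is $-3\eta < 0$, so they have opposite signs; and $p(-1) = 2 - 9\eta$. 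For $\eta > 2/9$ we have $p(-1) < 0$ while $p(z) \to +\infty$ as $z \to -\infty$, forcing the negative root to satisfy $z_- < -1$. Hence the companion matrix of the linearization has spectral radius $|z_-| > 1$, with the marginal case $z_- = -1$ occurring exactly at $\eta = 2/9$; this already pins down $2/(3L)$ as the stability threshold.

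Finally I would convert spectral instability into divergence of the iterates. Introducing the ``unstable coordinate'' $u_k := x_k - z_+ x_{k-1}$, where $z_+ \in (0,1)$ is the stable root, a direct computation using $z_+ + z_- = 1 - 6\eta$ and $z_+ z_- = -3\eta$ gives $u_{k+1} = z_- u_k + 2\eta\, g(x_k) - \eta\, g(x_{k-1})$, so in the purely linear system $|u_k| = |z_-|^k |u_0| \to \infty$ whenever $u_0 \neq 0$. The main obstacle is the nonlinear remainder: controlling $g$ is what upgrades this to genuine divergence of $\{x_k\}$, and it is also why the phrase ``any initial state $(x_0,x_{-1}) \neq (0,0)$'' is delicate, since a saddle fixed point carries a one-dimensional stable manifold. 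I expect to handle this in one of two ways: (i) invoke the discrete-time stable-manifold / Hartman--Grobman theorem to conclude that every initial condition off a measure-zero stable set escapes any neighborhood of $0$, whence $|x_k| \to \infty$; or (ii) exploit the global structure of this particular $f$, namely that the secant slope $F(x)/x = 2 + \sin(x)/x$ tends to $3$ as $x \to 0$, so the dominant mode stays strictly unstable near the fixed point, to show $|u_k|$ cannot remain bounded once $u_0 \neq 0$. The spectral computation of the second step is routine; the genuine difficulty is this last step, i.e.\ ruling out convergence along the stable direction and making the word ``diverges'' precise.
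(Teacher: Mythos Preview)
Your approach is essentially the same as the paper's: both cast the $\ogd$ update as a map on $\mathbb{R}^2$, linearize at the unique fixed point $(0,0)$, obtain the characteristic polynomial $z^2-(1-6\eta)z-3\eta$ for the Jacobian $\begin{bmatrix}1-6\eta & 3\eta\\ 1 & 0\end{bmatrix}$, and show one root leaves the unit disk precisely when $\eta>2/9$. Your use of the sign of $p(-1)=2-9\eta$ is a cleaner way to locate the unstable root than the explicit quadratic formula the paper writes out, but the content is identical.

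Where you differ is in the last paragraph. You correctly flag that a hyperbolic fixed point with one stable and one unstable eigenvalue has a one-dimensional local stable manifold, so the literal claim ``diverges for every $(x_0,x_{-1})\neq(0,0)$'' is delicate, and you sketch two routes (Hartman--Grobman plus a global argument, or a direct analysis of the unstable coordinate $u_k=x_k-z_+x_{k-1}$). The paper's proof does \emph{not} engage with this at all: it simply observes that the Jacobian has an eigenvalue of modulus larger than $1$, declares the fixed point unstable, and asserts divergence from any nontrivial initial state. In other words, the gap you are worried about is not closed in the paper either; the paper treats linear instability of the fixed point as synonymous with global divergence. So your proposal already matches (and in rigor exceeds) what the paper actually proves; if you want to go beyond the paper you would need to carry out one of your options~(i) or~(ii), and in particular argue why trajectories cannot lie on, or repeatedly return to, the stable manifold.
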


The proof of this proposition is deferred to \Cref{appendix:proof-tightness}. Observe that $f$ has a unique minimum at $0$, while $\nabla f$ has a Lipschitz constant of $3$. Thus, \Cref{theorem:OGD} predicts that under $\eta < 2/9 = 2/(3L)$ the $\ogd$ dynamics are stable; along with \Cref{proposition:OGD-tightness}, this implies that our bound with respect to the learning rate in \Cref{theorem:OGD} is tight. Naturally, this example can be directly extended to the min-max setting by considering the induced \emph{uncoupled} objective function.

Before we conclude this subsection we apply the circle criterion to verify that $\ogd$ converges to the fixed point when $\eta < 2/(3L)$; see \Cref{fig:Nyquist-OGD} for a graphical illustration.

\begin{figure}[!ht]
    \centering
    \includegraphics[scale=0.4]{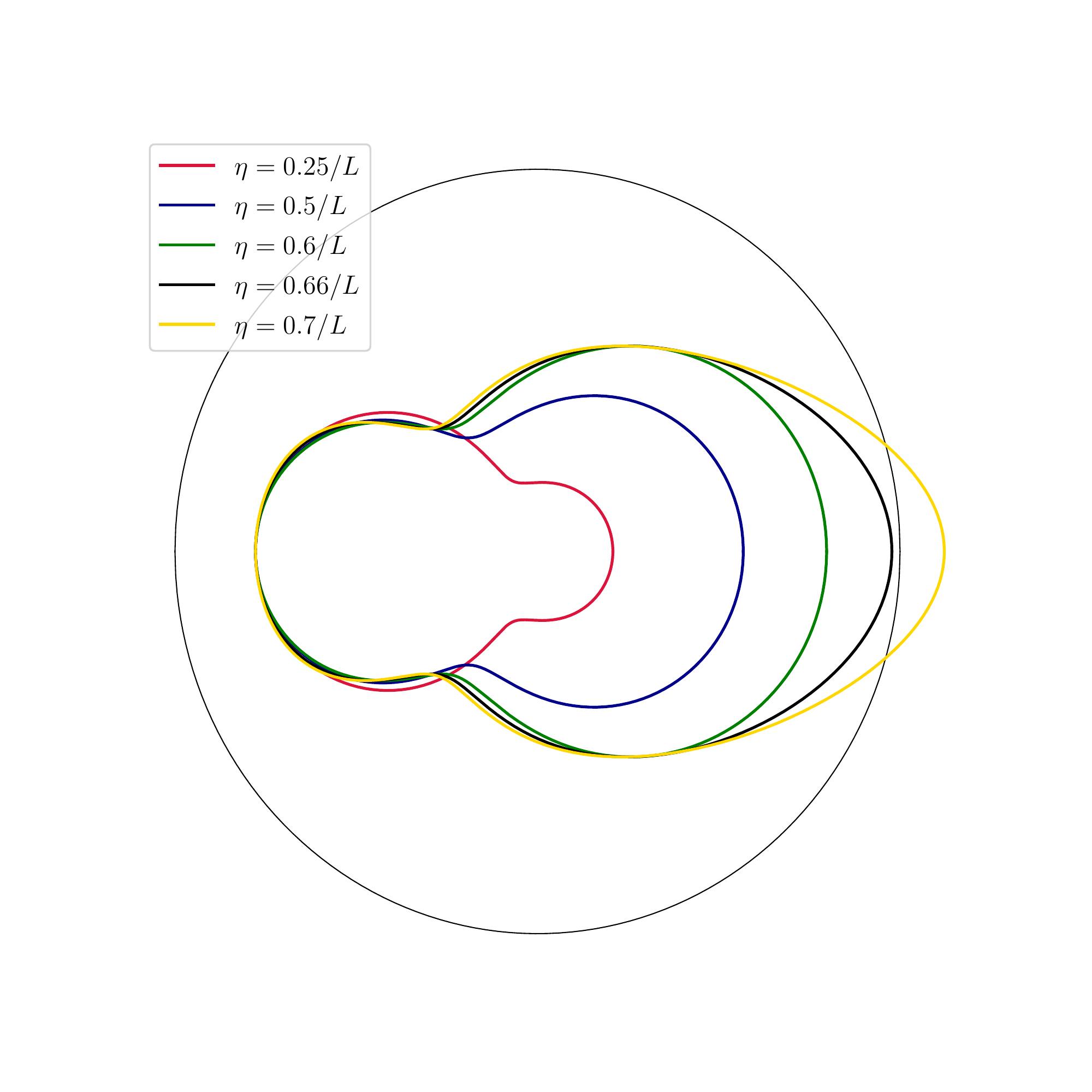}
    \caption{A graphical illustration of the circle criterion for Optimistic Gradient Desent under different values of learning rate: $\eta \in \{0.25/L, 0.5/L, 0.6/L, 0.66/L, 0.7/L\}$. The $x$-axis and $y$-axis represent $\Re\{ K' (e^{j \omega}) \}$ and $\Im \{ K'(e^{j \omega}) \}$ respectively. The circle corresponds to the closed disk $D(a,b)$ for $a = -(L - \mu)/2 = -b$. We can indeed verify that the Nyquist plot of $K'$ lies in the interior of $D(a,b)$ for $\eta <2/(3L)$. We note that these experiments were conducted with $\mu = 0.5$ and $L = 4$.}
    \label{fig:Nyquist-OGD}
\end{figure}

\begin{remark}[Other Single-Call Variants of Extra-Gradient]
\label{remark:extension}
Naturally, the results we establish for $\ogd$ apply for other equivalent and well-studied single-call variants of the Extra-Gradient method, namely Past Extra-Gradient Descent and Reflected Gradient Descent. This equivalence is shown in \Cref{appendix:single_call} using the proposed control-theoretic approach.
\end{remark}

\subsection{Generalized Optimistic Gradient Descent}

We also generalize our analysis for the Optimistic Gradient Descent method, considering the following extended dynamics:

\begin{equation}
    \label{eq:GOGD}
    x_{k+1} = x_k - (\alpha + \beta) F(x_k) + \beta F(x_{k-1}),
\end{equation}
where $\alpha > 0, \beta \geq 0$ are the parameters of the Generalized Optimistic Gradient Descent method (henceforth $\gogd$). Notice that for $\beta = 0$ we recover GD with $\eta = \alpha$, while for $\alpha = \beta$ we recover $\ogd$ with $\eta = \alpha = \beta$. Thus, $\gogd$ interpolates $\gd$ with $\ogd$. Analogously to our analysis for OGD, we can show the following:

\begin{proposition}
    \label{proposition:TF-GOGD}
    The transfer matrix of the Generalized Optimistic Gradient Descent controller can be expressed as $\mathbf{K}(z) = K(z) \mathbf{I}_d$, where $K(z) = (\beta - (\alpha + \beta)z)/(z^2-z)$.
\end{proposition}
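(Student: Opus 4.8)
The plan is to follow exactly the template of \Cref{proposition:TF-OGD}, since $\gogd$ differs from $\ogd$ only in the coefficients attached to the two most recent gradients. First I would cast the update rule \eqref{eq:GOGD} as a linear time-invariant controller in state-space (normal) form with state variables $\xi_k^{(1)}, \xi_k^{(2)} \in \mathbb{R}^d$, namely
\begin{equation*}
\begin{bmatrix} \xi_{k+1}^{(1)} \\ \xi_{k+1}^{(2)} \end{bmatrix} = \begin{bmatrix} \mathbf{I}_d & \beta \mathbf{I}_d \\ \mathbf{0}_d & \mathbf{0}_d \end{bmatrix} \begin{bmatrix} \xi_{k}^{(1)} \\ \xi_{k}^{(2)} \end{bmatrix} + \begin{bmatrix} -(\alpha + \beta) \mathbf{I}_d \\ \mathbf{I}_d \end{bmatrix} v_k, \qquad u_k = \begin{bmatrix} \mathbf{I}_d & \mathbf{0}_d \end{bmatrix} \begin{bmatrix} \xi_{k}^{(1)} \\ \xi_{k}^{(2)} \end{bmatrix},
\end{equation*}
where the only changes relative to \eqref{eq:OGD-normal_form} are that the scalar $\eta$ in the upper-right block is replaced by $\beta$ and the input coefficient $-2\eta$ is replaced by $-(\alpha+\beta)$. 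Substituting $x_k = \xi_k^{(1)} + x^*$ and recalling $v_k = F(u_k + x^*)$ recovers \eqref{eq:GOGD}, confirming that this representation is correct.

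Next I would read off the scalar input/output recursion of the controller. The second state equation gives $\xi_{k}^{(2)} = v_{k-1}$, so the first equation becomes $\xi_{k+1}^{(1)} = \xi_k^{(1)} + \beta v_{k-1} - (\alpha+\beta) v_k$; since $u_k = \xi_k^{(1)}$, this is precisely $u_k = u_{k-1} - (\alpha+\beta) v_{k-1} + \beta v_{k-2}$. Applying the time-delay property of the $Z$-transform then yields $U(z) = z^{-1} U(z) - (\alpha+\beta) z^{-1} V(z) + \beta z^{-2} V(z)$, and solving for the ratio $U(z)/V(z)$ gives $K(z) = (\beta - (\alpha+\beta)z)/(z^2 - z)$, as claimed.

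Since every step is a direct algebraic rewriting with no inequalities or stability arguments involved, there is no genuine obstacle here; the only point requiring care is bookkeeping the coefficients so that the past gradient $F(x_{k-1})$ carries weight $\beta$ and the current gradient $F(x_k)$ carries weight $-(\alpha+\beta)$. As a sanity check I would verify the two specializations noted after \eqref{eq:GOGD}: setting $\beta = 0$ collapses $K(z)$ to $-\alpha z/(z^2-z) = -\alpha/(z-1)$, matching the $\gd$ transfer function of \Cref{proposition:GD-TF} with $\eta = \alpha$, while setting $\alpha = \beta = \eta$ gives $K(z) = \eta(1-2z)/(z^2-z)$, matching \Cref{proposition:TF-OGD}.
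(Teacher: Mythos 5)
Your proposal is correct and matches the paper's intent exactly: the paper states that the proof of \Cref{proposition:TF-GOGD} ``follows identically to \Cref{proposition:TF-OGD},'' which is precisely the state-space-plus-$Z$-transform template you carry out, with the coefficients $-2\eta$ and $\eta$ replaced by $-(\alpha+\beta)$ and $\beta$. The sanity checks against the $\gd$ and $\ogd$ specializations are a nice addition but not something the paper includes in a proof.
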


The proof of this proposition follows identically to \Cref{proposition:TF-OGD}. Thus, we are ready to characterize the behavior of the $\gogd$ method:

\begin{restatable}{theorem}{genogd}
    \label{theorem:GOGD}
    Consider an operator $F$ which satisfies \Cref{assumption:sector} and \Cref{assumption:unique}. Then, if $\alpha = 1/(2L)$ and $\beta = \ell/(2L)$ for some $\ell \in [0,1]$, the Generalized Optimistic Gradient Descent method \eqref{eq:GOGD} converges linearly to the fixed point of $F$ with rate $\rho = 1 - \kappa^{-1}/4$.
\end{restatable}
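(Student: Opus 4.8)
The plan is to follow verbatim the template applied to $\gd$ and $\ogd$, reducing the theorem to a pole-and-gain analysis of the complementary sensitivity function. Beginning from the transfer function $K(z) = (\beta - (\alpha+\beta)z)/(z^2-z)$ of \Cref{proposition:TF-GOGD}, I would form $\mathbf{K}'(z) = \mathbf{K}(z)(\mathbf{I} - h\mathbf{K}(z))^{-1}$ with $h = (L+\mu)/2$, so that $K'(z) = K(z)/(1-hK(z))$, then substitute $\alpha = 1/(2L)$, $\beta = \ell/(2L)$ and set $\lambda = \kappa^{-1} = \mu/L$. Clearing denominators yields the scalar expression
\[
K'(\rho z) = \frac{\tfrac{1}{2L}\bigl(\ell - (1+\ell)\rho z\bigr)}{\rho^2 z^2 + \bigl(\tfrac{(1+\ell)(1+\lambda)}{4} - 1\bigr)\rho z - \tfrac{\ell(1+\lambda)}{4}},
\]
which specializes at $\ell = 1$ to the $\ogd$ controller with $\eta = 1/(2L)$, and degenerates at $\ell = 0$ to plain $\gd$ with $\eta = 1/(2L)$. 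I therefore expect the whole argument to run in parallel with---and in fact subsume---the pure $\ogd$ computation, with $\ell$ interpolating between the two endpoints.

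The first substantive step is stability. The function $K'(\rho z)$ is stable exactly when $\rho$ exceeds the moduli of both roots $w_0, w_1$ of the characteristic quadratic $w^2 + \bigl(\tfrac{(1+\ell)(1+\lambda)}{4} - 1\bigr)w - \tfrac{\ell(1+\lambda)}{4} = 0$. Writing $P := 1 - \tfrac{(1+\ell)(1+\lambda)}{4} \geq 0$, the roots are $w_{0,1} = \tfrac{1}{2}\bigl(P \pm \sqrt{P^2 + \ell(1+\lambda)}\bigr)$, and since $P \geq 0$ the positive root $w_0$ dominates in magnitude. Establishing stability thus reduces to the single surd inequality $\sqrt{P^2 + \ell(1+\lambda)} < 2 - \tfrac{\lambda}{2} - P$, which guarantees $\max\{|w_0|,|w_1|\} < 1 - \lambda/4 = \rho$ for all $\lambda \in (0,1)$, $\ell \in [0,1]$. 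I would isolate this as an auxiliary lemma in the spirit of \Cref{lemma:aux-OGD}; it reduces to a polynomial inequality in $\lambda,\ell$ that should hold with room to spare, recovering the $\sqrt{\lambda^2+2\lambda+5} < 3$ estimate at $\ell = 1$.

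The second step computes the gain via \Cref{theorem:H-infinity}, so that $\|K'_\rho\| = \sup_{\omega \in [-\pi,\pi]} |K'(\rho e^{j\omega})|$. I would expand $\psi(\omega) := |K'(\rho e^{j\omega})|^2$, using $|e^{j\omega}| = 1$, into a ratio $\frac{a_0 + a_1\cos\omega}{b_0 + b_1\cos\omega + b_2\cos 2\omega}$ of trigonometric polynomials with coefficients explicit in $\rho,\lambda,\ell$. The crux is a lemma asserting that $\psi$ attains its maximum on $[-\pi,\pi]$ at an endpoint $\omega \in \{0,\pi\}$, which I would verify by differentiating and ruling out interior maxima (elementary calculus, exactly as in the $\ogd$ proof), giving $\|K'_\rho\| = \max\{|K'(\rho)|, |K'(-\rho)|\}$.

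Finally I would verify the small-gain condition $\|K'_\rho\| < 2/(L-\mu)$, i.e.\ $|K'(\pm\rho)|(L-\mu)/2 < 1$. Substituting $\rho = 1 - \lambda/4$ and using $(L-\mu)/2 = L(1-\lambda)/2$ against the $1/(2L)$ prefactor reduces each to a rational inequality in $\lambda,\ell$ of the same shape as \eqref{eq:OGD-b_1}--\eqref{eq:OGD-b_2}, to be checked for all $\lambda \in (0,1)$, $\ell \in [0,1]$; \Cref{theorem:main_theorem} then delivers linear convergence at rate $\rho = 1 - \kappa^{-1}/4$. I expect the main obstacle to be uniformity in $\ell$: both the endpoint-maximality lemma for $\psi$ and the two final gain inequalities must hold simultaneously across the whole interval $\ell \in [0,1]$, not merely at $\ell = 1$, and the extra parameter enlarges the coefficient expressions enough that the sign and monotonicity arguments, while routine in principle, form the genuinely delicate part of the computation.
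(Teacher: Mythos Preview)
Your proposal is correct and follows essentially the same approach as the paper's own proof: the complementary sensitivity function you compute agrees (your linear coefficient $\tfrac{(1+\ell)(1+\lambda)}{4}-1$ is exactly the paper's $\tfrac{-3+\lambda+\ell+\lambda\ell}{4}$), your surd inequality for stability is the paper's auxiliary lemma rescaled by a factor of $4$, and the endpoint-maximality lemma for $\psi$ together with the two final gain inequalities match the paper's remaining lemmas verbatim. The paper likewise defers the elementary-calculus verifications, so your assessment of where the routine-but-tedious work lies is accurate.
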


The proof is included in \Cref{appendix:proof-GOGD}, as it follows analogously to \Cref{theorem:OGD}. We also provide a characterization under a different regime of parameters:
\begin{theorem}
    \label{theorem:GOGD-2}
    Consider an operator $F$ which satisfies \Cref{assumption:sector} and \Cref{assumption:unique}. Then, if $\alpha = 1/L$ and $\beta = \epsilon/(2L)$ for some $\epsilon \in (0,1)$, the Generalized Optimistic Gradient Descent method \eqref{eq:GOGD} converges linearly to the fixed point of $F$ with rate $\rho = 1 - \epsilon (1-\epsilon) \kappa^{-1}/2$.
\end{theorem}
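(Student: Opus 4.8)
The plan is to follow verbatim the skeleton of \Cref{theorem:OGD}, since $\gogd$ shares the same controller structure as $\ogd$ and only the specific parameters differ. By \Cref{proposition:TF-GOGD} the controller transfer function is $K(z) = (\beta - (\alpha+\beta)z)/(z^2-z)$, so the complementary sensitivity function $K'(z) = K(z)/(1 - hK(z))$, with $h = (L+\mu)/2$, becomes
\begin{equation*}
    K'(z) = \frac{\beta - (\alpha+\beta)z}{z^2 + (h(\alpha+\beta) - 1)z - h\beta}.
\end{equation*}
Writing $\lambda = \kappa^{-1} = \mu/L$ and substituting $\alpha = 1/L$, $\beta = \epsilon/(2L)$ (so that $h\alpha = (1+\lambda)/2$ and $h\beta = \epsilon(1+\lambda)/4$) turns this into an explicit rational function whose numerator is $\frac{1}{2L}(\epsilon - (2+\epsilon)z)$ and whose denominator is a monic quadratic with coefficients depending only on $\lambda$ and $\epsilon$. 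As in the $\ogd$ analysis, it then suffices to (i) verify that $K'(\rho z)$ is stable for $\rho = 1 - \epsilon(1-\epsilon)\lambda/2$, and (ii) bound its gain by $2/(L-\mu)$; linear convergence then follows from \Cref{theorem:main_theorem}.

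For the stability step, I would solve the characteristic equation
\begin{equation*}
    z^2 + \left(\frac{(1+\lambda)(2+\epsilon)}{4} - 1\right)z - \frac{\epsilon(1+\lambda)}{4} = 0
\end{equation*}
explicitly via the quadratic formula, obtaining two real roots $z_0, z_1$ in terms of $\lambda$ and $\epsilon$. Since $K'(\rho z)$ has poles at $z_0/\rho$ and $z_1/\rho$, stability is equivalent to $\rho > \max\{|z_0|, |z_1|\}$. The goal is to show $\max\{|z_0|,|z_1|\} < 1 - \epsilon(1-\epsilon)\lambda/2$, which reduces---exactly as \Cref{lemma:aux-OGD} does for $\ogd$---to two one-sided inequalities bounding the discriminant $\sqrt{((1+\lambda)(2+\epsilon)/4 - 1)^2 + \epsilon(1+\lambda)}$ from above. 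These are polynomial inequalities in $\lambda, \epsilon \in (0,1)$ that can be cleared of radicals by squaring and checked directly.

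For the gain step, \Cref{theorem:H-infinity} reduces the computation to $\sup_{\omega} |K'(\rho e^{j\omega})|$. Writing $z = \rho e^{j\omega}$ and taking squared magnitudes, $|K'(\rho e^{j\omega})|^2$ again takes the form $\psi(\omega) = (a_0 + a_1\cos\omega)/(b_0 + b_1\cos\omega + b_2\cos 2\omega)$, for coefficients $a_i, b_i$ computed from the numerator and denominator above. I would then argue that for $\rho = 1 - \epsilon(1-\epsilon)\lambda/2$ the maximum of $\psi$ over $[-\pi,\pi]$ is attained at an endpoint $\omega \in \{0, \pi\}$, so that $||K'_{\rho}|| = \max\{|K'(\rho)|, |K'(-\rho)|\}$. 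Finally, plugging $z = \pm\rho$ into $K'$ yields two closed-form expressions, and it remains to verify the two scalar inequalities $|K'(\rho)|(L-\mu)/2 < 1$ and $|K'(-\rho)|(L-\mu)/2 < 1$, which again reduce to rational inequalities in $\lambda, \epsilon$.

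The main obstacle is the reduction of $\sup_\omega \psi(\omega)$ to the endpoints $\{0,\pi\}$: although the $\ogd$ proof dismisses this as verifiable by elementary calculus, the argument genuinely requires showing that $\psi'(\omega)$ admits no interior maximizer for the specific $\rho$ at hand, which is a (routine but delicate) sign analysis of a rational function of $\cos\omega$. The subsequent scalar inequalities and the discriminant bounds of the stability step are mechanical once this monotonicity is in place; because the parameter substitution differs from \Cref{theorem:OGD}, each inequality must be re-derived, but no idea beyond those already used for $\ogd$ is needed---this is precisely the modularity the framework advertises.
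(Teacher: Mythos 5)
Your proposal is correct and follows exactly the route the paper intends: the paper omits this proof, stating only that it ``proceeds similarly to \Cref{theorem:OGD} and \Cref{theorem:GOGD},'' and your plan---deriving $K'(z)$ from \Cref{proposition:TF-GOGD}, bounding the roots of the characteristic quadratic below $\rho = 1-\epsilon(1-\epsilon)\lambda/2$, reducing the $\mathcal{H}_\infty$-norm to the endpoints $\omega\in\{0,\pi\}$, and checking the two scalar gain inequalities before invoking \Cref{theorem:main_theorem}---is precisely that template with the new substitutions $h\alpha=(1+\lambda)/2$, $h\beta=\epsilon(1+\lambda)/4$, all of which you have computed correctly. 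Your candid flagging of the endpoint-reduction step as the only genuinely delicate part matches the paper's own treatment, which asserts the analogous lemmas without detailed verification.
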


The proof of this theorem proceeds similarly to \Cref{theorem:OGD} and \Cref{theorem:GOGD}, and it is therefore omitted. Before we conclude this section we illustrate the behavior of the $\gogd$ controller using the circle criterion (\Cref{proposition:generalized_Nyquist}). In particular, \Cref{fig:Nyquist-GOGD} illustrates the transition from $\gd$ to $\ogd$.

\begin{figure}
\centering
\begin{minipage}{.5\textwidth}
  \centering
  \includegraphics[scale=0.33]{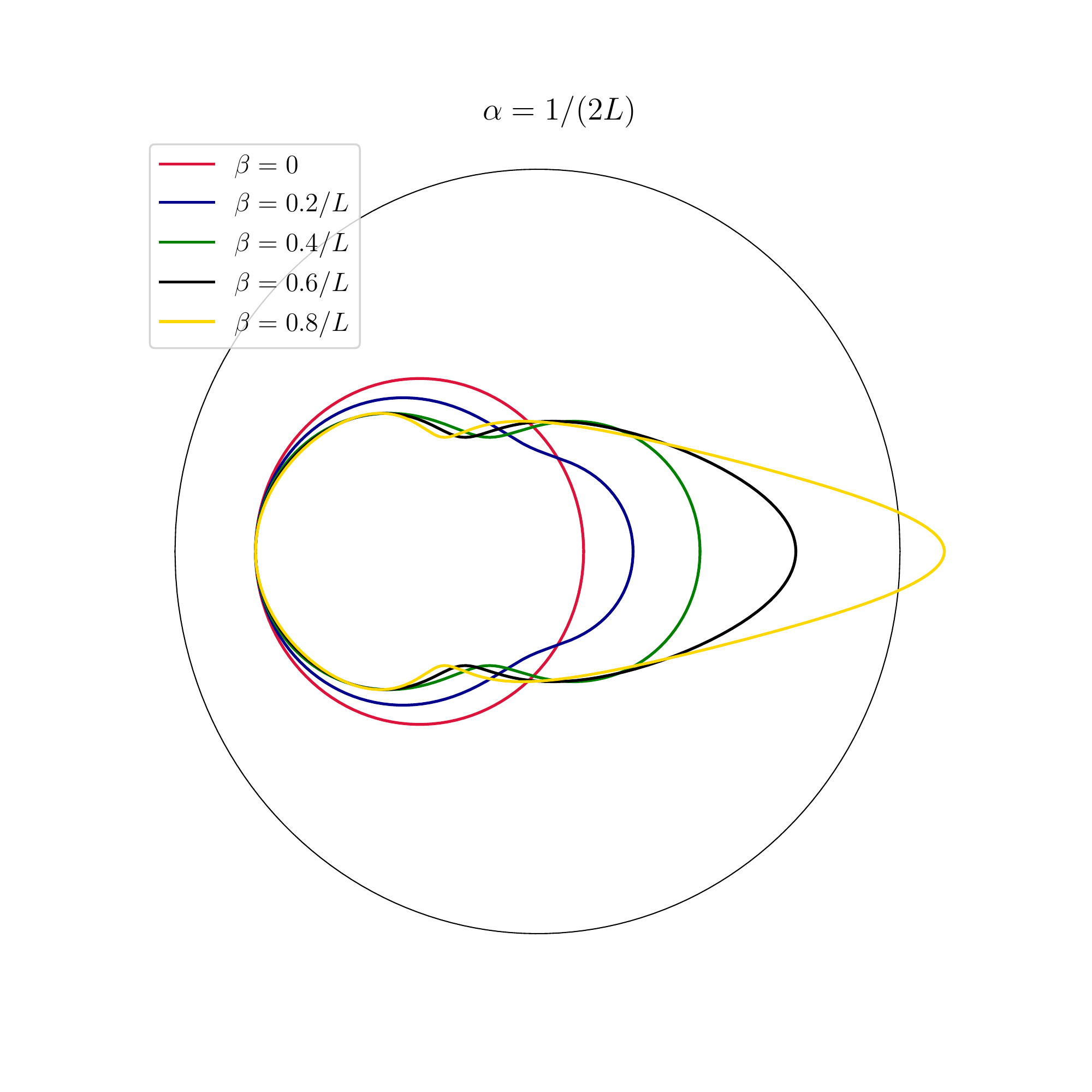}
  %\captionof{figure}{A figure}
  \label{fig:test1}
\end{minipage}%
\begin{minipage}{.5\textwidth}
  \centering
  \includegraphics[scale=0.33]{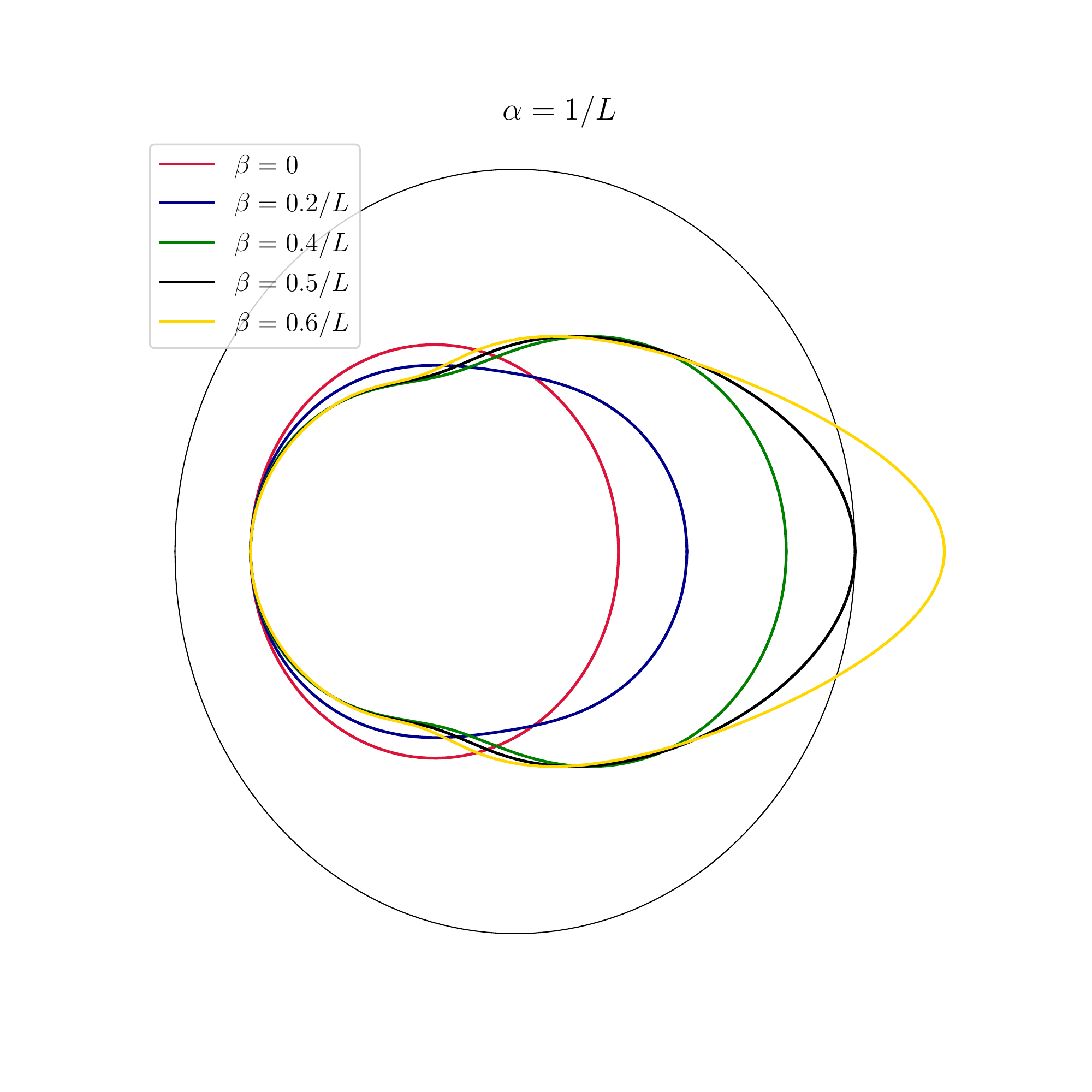}
  %\captionof{figure}{Another figure}
  \label{fig:test2}
\end{minipage}
\caption{A graphical illustration of the circle criterion for the Generalized Optimistic Gradient Descent ($\gogd$) controller for different values of $\alpha$ and $\beta$.}
\label{fig:Nyquist-GOGD}
\end{figure}

\subsection{Proximal Point Method}

The Proximal Point method (PP) is one of the most well-studied \emph{implicit} algorithms in optimizing, commemcing with the seminal work of Rockafellar \cite{doi:10.1137/0314056,10.2307/3689277} (see also \cite{doi:10.1137/1.9781611974997}). For the sake of simplicity, in this section we assume that the operator $F$ corresponds to the gradient of a continuously differentiable, smooth and convex function $f$. In this context, the update of $\pp$ boils down to the following equation:

\begin{equation}
    \label{eq:PP}
    x_{k+1} = \prox_{\eta f} (x_k) = \argmin_{x \in \mathbb{R}^d} \left\{ f(x) + \frac{1}{2\eta} || x - x_k ||^2 \right\}.
\end{equation}

It is easy to see that \eqref{eq:PP} is tantamount to $x_{k+1} = x_k - \eta \nabla f(x_{k+1})$. Thus, the $\pp$ controller can be expressed in the following normal form: 

\begin{subequations}
\label{eq:PP-normal_form}
\begin{align}
    \xi_{k+1} = \xi_k - \eta v_k; \\
    u_k = \xi_k - \eta v_k,
\end{align}
\end{subequations}
where $v_k = \nabla g(u_k + x^*)$, and $\xi_k \in \mathbb{R}^d$ represents the state vector of the controller. Notice that unlike the previously analyzed optimization algorithms, $\mathbf{D}$ is \emph{not} the zero matrix, and subsequently the transfer matrix is \emph{not} strictly proper. Nonetheless, we claim that when $f$ is convex, the induced feedback interconnection is indeed well-posed, and the same holds after we apply the linear shift transformation.

\begin{proposition}
    \label{proposition:PP-TF}
The transfer function of the Proximal Point controller can be expressed as $\mathbf{K}(z) = K(z) \mathbf{I}_d$, where $K(z) = -\eta z/(z-1)$.
\end{proposition}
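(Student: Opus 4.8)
The plan is to follow the same recipe used for $\gd$, $\ogd$, and $\gogd$ (\Cref{proposition:GD-TF,proposition:TF-OGD,proposition:TF-GOGD}): starting from the state-space normal form of the controller, I would extract a scalar input/output recurrence relating the output $u_k$ to the input $v_k$ and their delays, transfer it to the $z$-domain via the time-delay property $\mathcal{Z}\{u_{k-r}\} = z^{-r} U(z)$, and finally solve for the ratio $U(z)/V(z) = K(z)$. Since the dynamics act identically across coordinates, this immediately gives $\mathbf{K}(z) = K(z)\mathbf{I}_d$.

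The crucial observation in deriving the recurrence is that, unlike the explicit methods, the output of the $\pp$ controller coincides with the \emph{next} state rather than the current one. Indeed, \eqref{eq:PP-normal_form} gives $u_k = \xi_k - \eta v_k$, while the state update reads $\xi_{k+1} = \xi_k - \eta v_k$; comparing the two yields $u_k = \xi_{k+1}$, and hence $\xi_k = u_{k-1}$. Substituting this back into the output equation produces the first-order recurrence $u_k = u_{k-1} - \eta v_k$. Taking the $Z$-transform then gives $U(z) = z^{-1} U(z) - \eta V(z)$, and rearranging yields
\begin{equation*}
    U(z) = -\eta \frac{z}{z-1} V(z),
\end{equation*}
which is precisely the claimed transfer function $K(z) = -\eta z/(z-1)$.

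The calculation itself is elementary; the only point that deserves care is the feedthrough term. Observe that $v_k$ enters the output equation \emph{without} a delay, which is exactly what produces the extra factor of $z$ in the numerator relative to the $\gd$ controller $K(z) = -\eta/(z-1)$. Equivalently, the feedthrough matrix $\mathbf{D}$ is nonzero, so $K(\infty) = -\eta \neq 0$ and the transfer function fails to be strictly proper---the signature of an \emph{implicit} method. I expect the main subtlety to lie not in this algebraic derivation but in justifying that the induced feedback interconnection (and its linear-shift transform) remains well-posed despite the non-strict-properness; this is where the convexity of $f$ is invoked, guaranteeing that the implicit update \eqref{eq:PP} is uniquely solvable and that the manipulations above are legitimate.
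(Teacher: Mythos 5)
Your proof is correct and follows essentially the same route as the paper: the paper's proof simply asserts the recurrence $u_k = u_{k-1} - \eta v_k$ and transfers to the $z$-domain, while you additionally justify that recurrence from the normal form via $u_k = \xi_{k+1}$. Your closing remarks on the nonzero feedthrough and well-posedness match the discussion the paper places just before the proposition rather than inside the proof.
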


\begin{proof}
It follows that $u_k = u_{k-1} - \eta v_k$, and transferring to the $z$-space leads to the desired conclusion.
\end{proof}

\begin{theorem}
    Consider a continuously differentiable function $f$ whose gradients satisfy \Cref{assumption:sector} and \Cref{assumption:unique}. Then, if $\eta = 2t/(L + \mu)$ for some $t > 0$, the Proximal Point method converges linearly to $x^*$ with rate $\rho$, for any $\rho > (L + \mu)/(L + \mu + 2\mu t)$.
\end{theorem}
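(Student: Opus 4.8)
The plan is to follow precisely the skeleton used in the warm-up analysis of $\gd$, specializing the three-step recipe---linear shift transformation, stability of the scaled complementary sensitivity function, and gain computation via \Cref{theorem:H-infinity}---to the controller $K(z) = -\eta z/(z-1)$ furnished by \Cref{proposition:PP-TF}. The one genuinely new ingredient is that $K(z)$ is \emph{not} strictly proper, so I would first dispense with the well-posedness of the feedback interconnection before invoking \Cref{theorem:main_theorem}; as asserted in the paragraph preceding \Cref{proposition:PP-TF}, convexity of $f$ guarantees that the loop---and its linear shift---is well-posed, so the small gain theorem remains applicable despite the failure of strict properness.

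First I would form the complementary sensitivity function with $h = (L+\mu)/2$. Writing $1 - hK(z) = ((1 + h\eta)z - 1)/(z-1)$ and cancelling the common factor $(z-1)$ gives $K'(z) = -\eta z/((1 + h\eta)z - 1)$. Substituting $\eta = 2t/(L+\mu)$ produces the convenient simplification $h\eta = t$, so that $K'(z) = -\eta z/((1+t)z - 1)$ and hence $K'(\rho z) = -\eta\rho z/((1+t)\rho z - 1)$. The stability step is then immediate: the single pole of $K'(\rho z)$ sits at $z = 1/((1+t)\rho)$, which lies strictly inside the unit disk precisely when $\rho > 1/(1+t)$.

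For the gain, \Cref{theorem:H-infinity} reduces matters to maximizing $|K'(\rho e^{j\omega})| = \eta\rho/|(1+t)\rho e^{j\omega} - 1|$ over $\omega \in [-\pi,\pi]$. Since the numerator is constant in $\omega$ and the squared denominator equals $(1+t)^2\rho^2 - 2(1+t)\rho\cos(\omega) + 1$, which is minimized at $\omega = 0$, I would conclude that $\|K'_\rho\| = \eta\rho/((1+t)\rho - 1)$, using that stability forces $(1+t)\rho > 1$.

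Finally I would impose the small gain condition $\|K'_\rho\| < 2/(L-\mu)$. Substituting $\eta = 2t/(L+\mu)$ and clearing denominators, the inequality collapses after routine algebra to $(L+\mu) < \rho((L+\mu) + 2\mu t)$, i.e. $\rho > (L+\mu)/((L+\mu) + 2\mu t)$, which is exactly the claimed rate; here the key simplification is the identity $(L+\mu)(1+t) - t(L-\mu) = (L+\mu) + 2\mu t$. A quick sanity check confirms that this threshold exceeds the stability threshold $1/(1+t)$---equivalently $L > \mu$, which holds since $\kappa > 1$---so the gain condition is the binding one and \Cref{theorem:main_theorem} delivers the result. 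I expect the only conceptual obstacle to be the justification of well-posedness in the absence of strict properness; once that is secured, the remainder is entirely mechanical and mirrors the $\gd$ warm-up.
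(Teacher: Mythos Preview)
Your proposal is correct and follows essentially the same route as the paper: form $K'(\rho z) = -\eta\rho z/((1+t)\rho z - 1)$ with $h\eta = t$, read off stability for $\rho > 1/(1+t)$, compute the gain as $\eta\rho/((1+t)\rho - 1)$, and reduce the small gain condition to $\rho > (L+\mu)/((L+\mu)+2\mu t)$. Your treatment is slightly more detailed---you justify the maximizer $\omega = 0$ and explicitly flag the well-posedness issue that the paper handles in the paragraph preceding \Cref{proposition:PP-TF}---but the argument is the same.
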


\begin{proof}
The complementary sensitivity function of the $\pp$ controller reads 

\begin{equation*}
    K'(\rho z) = - \eta \frac{\rho z}{\rho z - 1 + h \eta \rho z} = -\eta \frac{\rho z}{\rho z (1 + t) - 1}.
\end{equation*}

Thus, for any $\rho > (L + \mu)/(L + \mu + 2\mu t) \geq 1/(1 + t)$ it follows that $K'(\rho z)$ is stable. Moreover, it is easy to see that $|| K'_{\rho} || = \eta \rho / (\rho (1+t) - 1)$, and simple calculations imply that 

\begin{equation*}
    || K_{\rho}' || \left( \frac{L - \mu}{2} \right) = \frac{t \rho}{t \rho + \rho -1} \frac{L - \mu}{L + \mu} < 1;
\end{equation*}
thus, the conditions of \Cref{theorem:main_theorem} are satisfied, and the theorem follows.
\end{proof}

Therefore, if the learning rate $\eta$ is sufficiently large, the Proximal Point method converges linearly with any arbitrary rate, while for $\eta \to \infty$ the convergence is \emph{superlinear}; these observations are consistent with the seminal work of Rockafellar \cite{doi:10.1137/0314056}. For completeness, we also provide the following theorem without a proof:

\begin{theorem}
    Consider a continuously differentiable function $f$ whose gradients satisfy \Cref{assumption:sector} and \Cref{assumption:unique}. Then, if $\eta = t/L$ for some $t > 0$, the Proximal Point method converges linearly to $x^*$ with rate $\rho$, for any $\rho > 1/(1 + \kappa^{-1} t) $.
\end{theorem}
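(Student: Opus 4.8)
The plan is to mirror the proof of the preceding Proximal Point theorem almost verbatim, changing only the value of the learning rate. First I would invoke \Cref{proposition:PP-TF}, which gives $K(z) = -\eta z/(z-1)$, and form the complementary sensitivity function $K'(z) = K(z)/(1 - h K(z))$ with $h = (L+\mu)/2$. Substituting $\eta = t/L$ and simplifying the rational expression, I expect to obtain
\[
K'(\rho z) = -\frac{\eta \rho z}{\rho z (1 + h\eta) - 1}, \qquad h\eta = \frac{t(L+\mu)}{2L}.
\]
The denominator is affine in $z$, so there is a single pole at $z_p = 1/\bigl(\rho(1+h\eta)\bigr)$, which lies strictly inside the unit disk precisely when $\rho > 2L/(2L + t(L+\mu))$; this is the stability threshold.

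Next I would compute the gain via \Cref{theorem:H-infinity}. On $|z|=1$ the numerator has constant modulus $\eta\rho$, and since $\rho(1+h\eta) > 1$ is a positive real number, the denominator $|\rho(1+h\eta)e^{j\omega} - 1|$ is minimized at $\omega = 0$. Hence
\[
|| K'_\rho || = \frac{\eta \rho}{\rho(1 + h\eta) - 1} = \frac{2t\rho}{\rho(2L + t(L+\mu)) - 2L}.
\]
I would then impose the gain condition $|| K'_\rho || < 2/(L-\mu)$ from \Cref{theorem:main_theorem}. Clearing denominators, the terms carrying $L$ and $\mu$ combine so that the factor $(L-\mu) - (L+\mu) = -2\mu$ appears, and after cancelling a factor of $2$ the inequality collapses to $L < \rho(L + \mu t)$, i.e.\ exactly $\rho > 1/(1 + \kappa^{-1} t)$.

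Finally, to legitimately apply \Cref{theorem:H-infinity} I must confirm that this gain threshold already enforces stability. Comparing $L/(L+\mu t)$ with the stability threshold $2L/(2L + t(L+\mu))$ reduces, after cross-multiplication and dividing by $Lt$, to the inequality $L \geq \mu$, which holds since $\kappa = L/\mu > 1$. Thus every $\rho > 1/(1+\kappa^{-1}t)$ renders $K'(\rho z)$ stable and satisfies the gain bound, and \Cref{theorem:main_theorem} yields linear convergence with rate $\rho$. The one point requiring care --- as in the preceding theorem --- is that the PP controller is not strictly proper; I would rely on the well-posedness of the (shifted) feedback interconnection for convex $f$ already established around \Cref{proposition:PP-TF}, exactly as in the earlier Proximal Point proof. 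The main obstacle is not any single computation but rather keeping this well-posedness caveat and the stability-versus-gain comparison explicit; everything else is routine algebra identical in structure to the $\eta = 2t/(L+\mu)$ case.
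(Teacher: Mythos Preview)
Your proposal is correct and is exactly the argument the paper intends: the paper explicitly states this theorem ``without a proof,'' deferring to the immediately preceding Proximal Point result, and your write-up follows that preceding proof step for step with only the substitution $\eta = t/L$ in place of $\eta = 2t/(L+\mu)$. The extra care you take---verifying that the gain threshold $1/(1+\kappa^{-1}t)$ dominates the stability threshold $2L/(2L+t(L+\mu))$, and flagging the well-posedness caveat for the non-strictly-proper controller---is in fact slightly more explicit than what the paper records for the companion theorem.
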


\subsection{PID Control}

In this subsection we show that all of the previously analyzed first-order methods are actually instances of $\pid$ control. In particular, first note that in our setting the desired \emph{reference point} corresponds to $F = 0$, and hence, the \emph{error signal} can be expressed as $e_k = 0 - v_k$, where $v_k$ is the output of the nonlinearity (the plant). In this context, the (discrete) $\pid$ controller can be expressed with the following input/output relation:

\begin{equation}
    \label{eq:PID-control}
    u_k = \overbrace{K_P e_k}^{\textrm{Proportional}} + \overbrace{K_I \sum_{i=1}^{\infty} e_{k-i}}^{\textrm{Integral}} + \overbrace{K_D (e_k - e_{k-1})}^{\textrm{Derivative}},
\end{equation}
where $(K_P, K_I, K_D)$ represent the parameters of the controller. Here we should stress that different variants of $\pid$ control arise depending on the implementation of the discrete-time derivative and integration, but the form of \Cref{eq:PID-control} is perhaps the most standard one. Next, if we transform \eqref{eq:PID-control} to the $z$-space, and we use that $e_k = - v_k$, we reach to the following conclusion:

\begin{proposition}
    \label{proposition:PID-TF}
The transfer matrix of the $\pid$ controller can be expressed as $\mathbf{K}(z) = K(z) \mathbf{I}_d$, where

\begin{equation}
    K(z) = - \frac{(K_P + K_D)z^2 + (-K_P + K_I - 2 K_D)z + K_D}{z^2 - z}.
\end{equation}
\end{proposition}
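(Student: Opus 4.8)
The plan is to apply the (bilateral) $Z$-transform directly to the input/output relation \eqref{eq:PID-control}, treating the three constituent terms---proportional, integral, and derivative---separately, and then to read off the scalar transfer function $K(z) = U(z)/V(z)$. Since the claimed transfer matrix is $K(z)\mathbf{I}_d$, i.e.\ it acts identically on every coordinate, it suffices to carry out the computation for the scalar (one-dimensional) controller and tensor with $\mathbf{I}_d$; this mirrors the derivations in \Cref{proposition:GD-TF} and \Cref{proposition:TF-OGD}. Throughout I would use the substitution $e_k = -v_k$ dictated by the choice of reference point $F = 0$, so that $E(z) = -V(z)$.

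The proportional and derivative terms are immediate. The proportional contribution transforms to $K_P E(z)$, and for the derivative term the time-delay property $\mathcal{Z}\{e_{k-1}\} = z^{-1}E(z)$ gives $K_D(e_k - e_{k-1}) \mapsto K_D(1 - z^{-1})E(z)$. The integral term is the one requiring a little care: rather than transforming $\sum_{i=1}^{\infty} e_{k-i}$ term by term, I would set $s_k \define \sum_{i=1}^{\infty} e_{k-i}$ and observe the recursion $s_k = s_{k-1} + e_{k-1}$, whose $Z$-transform reads $S(z) = z^{-1}S(z) + z^{-1}E(z)$ and hence $S(z) = \frac{1}{z-1}E(z)$ (equivalently, summing the geometric series $\sum_{i=1}^{\infty} z^{-i} = z^{-1}/(1 - z^{-1})$ on the appropriate region of convergence). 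Combining the three contributions and using $E(z) = -V(z)$ yields $U(z) = -\left[K_P + \frac{K_I}{z-1} + K_D\frac{z-1}{z}\right]V(z)$.

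The final step is purely algebraic: place the bracketed expression over the common denominator $z(z-1) = z^2 - z$, for which the numerator becomes $K_P z(z-1) + K_I z + K_D(z-1)^2$, and collect powers of $z$ to obtain $(K_P + K_D)z^2 + (-K_P + K_I - 2K_D)z + K_D$, matching the stated formula after the overall sign. I do not anticipate a genuine obstacle here; the only delicate point is the handling of the infinite integral term, where one must justify the $1/(z-1)$ factor either through the recursion above or by specifying the region of convergence of the geometric sum (and implicitly assuming, as is standard, that the error signal is summable / that one works on the unit circle). Everything else reduces to routine bookkeeping of coefficients.
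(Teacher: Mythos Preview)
Your proposal is correct and follows exactly the approach the paper indicates: the paper merely says to transform \eqref{eq:PID-control} to the $z$-space using $e_k = -v_k$, and you carry this out in detail, handling the integral term via the recursion $s_k = s_{k-1} + e_{k-1}$ (equivalently the geometric series) and then collecting over the common denominator $z^2 - z$. There is nothing to add; your derivation is, if anything, more explicit than what the paper provides.
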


As a result, we will use this proposition to conclude that $\pid$ control subsumes all of the previously analyzed first-order methods:

\begin{proposition}
    \label{proposition:PID-GOGD}
    $\pid$ control with $(K_P, K_I, K_D) = (\beta, \alpha, - \beta)$ is equivalent to the Generalized Optimistic Gradient Descent method \eqref{eq:GOGD} with parameters $(\alpha, \beta)$.
\end{proposition}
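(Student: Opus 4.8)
The plan is to reduce the claim to a direct comparison of the two transfer functions that have already been derived, so that no new analysis is required. Recall from \Cref{proposition:TF-GOGD} that the $\gogd$ controller with parameters $(\alpha, \beta)$ has transfer function
\begin{equation*}
    K_{\gogd}(z) = \frac{\beta - (\alpha + \beta)z}{z^2 - z},
\end{equation*}
while \Cref{proposition:PID-TF} gives the $\pid$ controller's transfer function as
\begin{equation*}
    K_{\pid}(z) = -\frac{(K_P + K_D)z^2 + (-K_P + K_I - 2 K_D)z + K_D}{z^2 - z}.
\end{equation*}
Both controllers are one-dimensional LTI systems of the form $\mathbf{K}(z) = K(z)\mathbf{I}_d$, and such a system is completely determined, as an input/output map $U(z) = K(z) V(z)$, by its transfer function. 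Hence it suffices to show that substituting $(K_P, K_I, K_D) = (\beta, \alpha, -\beta)$ into $K_{\pid}$ reproduces $K_{\gogd}$ identically.

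The key step is the coefficient matching for the three monomials in the numerator. First I would compute the quadratic coefficient $K_P + K_D = \beta - \beta = 0$; this is precisely the condition that the proportional and derivative contributions cancel at second order, ensuring the $\pid$ numerator collapses to degree one and can therefore match the (degree-one) numerator of $\gogd$. Next I would evaluate the linear coefficient $-K_P + K_I - 2 K_D = -\beta + \alpha + 2\beta = \alpha + \beta$ and the constant term $K_D = -\beta$. Collecting these and distributing the overall negative sign yields numerator $-\big((\alpha+\beta)z - \beta\big) = \beta - (\alpha+\beta)z$, so that $K_{\pid}(z) = K_{\gogd}(z)$, which establishes the equivalence.

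There is essentially no analytical obstacle here: the entire argument is a substitution followed by elementary arithmetic, and the vanishing of the $z^2$ coefficient is the only structural fact that needs to be flagged. The one point deserving a sentence of care is the justification that equality of transfer functions genuinely certifies \emph{equivalence} of the two methods—since both systems share the denominator $z^2 - z$ (hence the same poles and state dimension) and an identical numerator after substitution, their impulse responses, and therefore their full input/output trajectories for every admissible input, coincide. As an optional self-contained check one could instead argue in the time domain, transforming $K_{\pid}(z)$ back via the time-delay property of the $Z$-transform and verifying that the induced recurrence is exactly \eqref{eq:GOGD}; this is not needed given the transfer-function characterization but serves as a useful sanity check.
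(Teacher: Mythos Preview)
Your proposal is correct and follows exactly the approach the paper intends: the proposition is stated immediately after \Cref{proposition:PID-TF} with the phrase ``As a result, we will use this proposition,'' and the paper gives no further proof, leaving the transfer-function substitution you carry out as the implicit argument. Your coefficient matching is accurate, and the observation that $K_P + K_D = 0$ collapses the numerator to degree one is precisely the point.
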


\begin{proposition}
    \label{proposition:PID-PP}
    $\pid$ control with $(K_P, K_I, K_D) = (\eta, \eta, 0)$ is equivalent to the Proximal Point method \eqref{eq:PP} with learning rate $\eta$.
\end{proposition}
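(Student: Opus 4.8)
The plan is to establish the equivalence at the level of transfer functions, exploiting the fact that a finite-order LTI system is completely characterized by its transfer matrix: if two controllers induce the same rational $K(z)$, then they realize identical input/output maps, and hence the same optimization dynamics once interconnected with the fixed nonlinearity $P$. So the entire task reduces to specializing the general $\pid$ transfer function to the prescribed parameters and matching it against the Proximal Point transfer function.

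First I would invoke \Cref{proposition:PID-TF}, which gives the transfer function of the general $\pid$ controller as
\begin{equation*}
    K(z) = - \frac{(K_P + K_D)z^2 + (-K_P + K_I - 2 K_D)z + K_D}{z^2 - z}.
\end{equation*}
Substituting the prescribed parameters $(K_P, K_I, K_D) = (\eta, \eta, 0)$, the leading coefficient becomes $K_P + K_D = \eta$, the linear coefficient collapses to $-K_P + K_I - 2K_D = -\eta + \eta - 0 = 0$, and the constant term $K_D$ vanishes. Thus the numerator reduces to $\eta z^2$, and the transfer function simplifies to $K(z) = -\eta z^2/(z^2 - z)$.

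Next I would cancel the common factor of $z$ by writing $z^2 - z = z(z-1)$, obtaining $K(z) = -\eta z/(z-1)$. This is precisely the transfer function of the Proximal Point controller derived in \Cref{proposition:PP-TF}. Since both controllers are one-dimensional in the sense that $\mathbf{K}(z) = K(z)\mathbf{I}_d$, and since they share the same scalar transfer function, the two induced LTI systems coincide, which yields the claimed equivalence.

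There is no serious technical obstacle here; the argument is a direct substitution followed by an elementary cancellation, entirely parallel to the reasoning behind \Cref{proposition:PID-GOGD}. The only conceptual point worth flagging is that, as already noted for $\pp$, the resulting transfer function is \emph{not} strictly proper---indeed $K(\infty) = -\eta \neq 0$---which reflects the implicit nature of the Proximal Point update. This is consistent with the choice $K_D = 0$, since the surviving proportional and integral components leave a nonzero direct-feedthrough term; well-posedness of the interconnection then follows from the same convexity argument used in the discussion preceding \Cref{proposition:PP-TF}.
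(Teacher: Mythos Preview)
Your proposal is correct and follows exactly the approach the paper has in mind: the proposition is stated without an explicit proof because it is an immediate consequence of substituting $(K_P,K_I,K_D)=(\eta,\eta,0)$ into the general $\pid$ transfer function of \Cref{proposition:PID-TF} and matching with \Cref{proposition:PP-TF}. Your additional remark about the loss of strict properness is apt and consistent with the paper's discussion of the Proximal Point controller.
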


\begin{remark}[Continuous-Time Dynamics]
All of the results presented regarding discrete-time dynamics are directly applicable for continuous-time dynamics as well. More precisely, the most common discretization in control theory is the bilinear transform, wherein parameter $z$ is associated with the continuous-time frequency $s$ of the Laplace transform, such that $z = (1 - sT/2)/(1 + sT/2)$, where $T$ represents the sampling period. Under this M\"{o}bius transform, the unit-circle in the $z$-plane, $|z| = 1$, is mapped to the imaginary axis in the $s$-plane, $\Re[s] = 0$, and it is well-known that the stability, as well as the gain of the controller are retained. 
\end{remark}

\subsection{Historical Methods}

In this subsection we explain how the approach previously employed can be numerically automated for a generic class of first-order algorithms. Specifically, we consider the following class of optimization methods:

\begin{equation}
    \label{eq:HGD}
    x_{k + T} = x_{k + T - 1} - \eta \sum_{i=1}^T a_i F(x_{k+T - i}),
\end{equation}
where $\eta > 0$ is the learning rate, and $a \in \mathbb{R}^T$ represents a given vector of parameters under some \emph{time horizon} $T \in \mathbb{N}$; you may assume that $||a|| = 1$. This method will be referred to as \emph{historical gradient descent} ($\hgd$). Naturally, this class includes as special cases algorithms such as the optimistic method ($\ogd$).

\begin{proposition}
    \label{proposition:HGD-TF}
The transfer matrix of the $\hgd$ controller can be expressed as $\mathbf{K}(z) = K(z) \mathbf{I}_d$, where

\begin{equation}
    K(z) = - \eta \frac{a_T + a_{T-1} z + \dots + a_1 z^{T-1}}{z^T - z^{T-1}}.
\end{equation}
\end{proposition}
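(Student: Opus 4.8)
The plan is to mirror the derivation of \Cref{proposition:TF-OGD}, reindexing the update rule into backward-shift form and then passing to the $z$-domain via the time-delay property $\mathcal{Z}\{u_{k-r}\} = z^{-r} U(z)$. First, I would cast $\hgd$ as a feedback interconnection with controller output $u_k = \xi_k = x_k - x^*$ and controller input $v_k = F(u_k + x^*) = F(x_k)$, exactly as in the $\gd$ and $\ogd$ cases. Reindexing \eqref{eq:HGD} (replacing the running index $k+T$ by $k$) gives $x_k = x_{k-1} - \eta \sum_{i=1}^T a_i F(x_{k-i})$, and substituting $x_k = u_k + x^*$ yields the purely linear controller recurrence
\[
    u_k = u_{k-1} - \eta \sum_{i=1}^T a_i v_{k-i},
\]
which is a bona fide LTI relation between the signals $(u_k)$ and $(v_k)$, with no residual dependence on the nonlinearity beyond $v$.

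Second, I would take the bilateral $Z$-transform termwise. The left-hand side contributes $U(z)$ and $z^{-1} U(z)$, while each summand $a_i v_{k-i}$ contributes $a_i z^{-i} V(z)$, producing
\[
    U(z)\,(1 - z^{-1}) = -\eta \left( \sum_{i=1}^T a_i z^{-i} \right) V(z).
\]
Solving for $U(z) = K(z) V(z)$ and clearing the negative powers by multiplying numerator and denominator by $z^T$ gives
\[
    K(z) = -\eta \frac{\sum_{i=1}^T a_i z^{T-i}}{z^T - z^{T-1}} = -\eta \frac{a_T + a_{T-1} z + \dots + a_1 z^{T-1}}{z^T - z^{T-1}},
\]
where the final equality merely rewrites $\sum_{i=1}^T a_i z^{T-i}$ in ascending powers of $z$. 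Since each of the $d$ coordinates of the state obeys the identical scalar recurrence, the transfer matrix factors as $\mathbf{K}(z) = K(z) \mathbf{I}_d$, as claimed.

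I do not anticipate any genuine obstacle: this is simply the $T=2$ argument of \Cref{proposition:TF-OGD} carried out for arbitrary $T$, and the only point demanding care is the index bookkeeping — matching the delay $z^{-i}$ to the coefficient $a_i$ so that, after multiplying through by $z^T$, the numerator's coefficients appear in the reversed order stated in the claim. As a sanity check I would verify the special cases: $T=1$ must recover $K(z) = -\eta a_1/(z-1)$, recovering $\gd$, while $T=2$ with $(a_1, a_2) = (2, -1)$ must recover the $\ogd$ transfer function $\eta(1-2z)/(z^2-z)$. Finally, strict properness is immediate, since the numerator has degree $T-1$, strictly below the denominator's degree $T$.
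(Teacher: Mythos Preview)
Your proposal is correct and follows exactly the approach the paper uses for its explicit derivations (\Cref{proposition:GD-TF} and \Cref{proposition:TF-OGD}): write the controller recurrence in backward-shift form, apply the time-delay property of the $Z$-transform termwise, and clear negative powers. The paper states \Cref{proposition:HGD-TF} without proof precisely because it is this routine generalization, and your index bookkeeping and sanity checks are spot on.
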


Now assume that we are given some learning rate $\eta = \eta_0/L$ and a parameter $\rho \in (0,1)$; it will also be assumed that the the parameters of the nonlinearity---a lower bound on $\mu$ and an upper bound on $L$---are known. We will explain how to automatically evaluate whether the conditions of \Cref{theorem:main_theorem} are satisfied. In particular, this process essentially consists of two steps:

\paragraph{Stability of a Polynomial.} First, after we apply the linear shift transformation, the characteristic equation reads

\begin{equation*}
    (\rho z)^T - (\rho z)^{T-1} + \frac{\eta_0}{2} (1 + \kappa^{-1}) (a_T + a_{T-1} (\rho z) + \dots + a_1 (\rho z)^{T-1}) = 0,
\end{equation*}
where $\kappa = L/\mu$. In this context, determining whether the roots of a polynomial with given coefficients lie inside the unit circle in the $z$-plane---i.e. testing the stability of a polynomial---constitutes one of the most well-studied problems in control theory, and can be solved efficiently, for example, via Bistritz's method \cite{1457261,989164} (under discrete-time dynamics), or many other schemes. 

\paragraph{Gain of the Controller.} Assuming that the controller is stable, the next step is to determine its gain, which boils down to maximizing the following function:

$$
    | K'(e^{j \omega}) |^2 = \left\lvert \frac{ a_T + a_{T-1} (\rho e^{j \omega}) + \dots + a_1 (\rho e^{j \omega})^{T-1}}{(\rho e^{j \omega})^T - (\rho e^{j \omega})^{T-1} + \frac{\eta_0}{2} (1 + \kappa^{-1}) (a_T + a_{T-1} (\rho e^{j \omega}) + \dots + a_1 (\rho e^{j \omega})^{T-1})} \right\rvert^2,
$$
for $\omega \in [0, 2\pi]$. In particular, it is easy to see that if we employ the formula $cos ((n_1 - n_2) \omega) = \cos(n_1 \omega) \cos(n_2 \omega) + \sin(n_1 \omega) \sin(n_2 \omega)$, and we use the fact that $\cos(n\omega)$ can be expressed as an $n$-th degree polynomial of $\cos(x)$, it suffices to solve the following optimization problem:

\begin{equation}
    \max_{x \in [-1, 1]} \frac{P(x; \rho, \eta_0, \kappa, T)}{Q(x; \rho, \eta_0, \kappa, T)},
\end{equation}
where $P(x)$ and $Q(x)$ are (univariate) polynomials; this new formulation follows directly from the substitution $x := \cos(\omega)$. Observe that this problem is indeed well-posed as we have ensured---by virtue of stability---that $Q$ does not have any real roots on $[-1, 1]$. Hence, one can efficiently \emph{approximate} the maximum of this rational function using an array of standard methods. Consequently, for a given $\eta_0$ and $\rho$ we can evaluate whether the conditions of \Cref{theorem:main_theorem} are met, and then it suffices to perform a bisection search on the parameters $\eta_0$ and $\rho$ in order to identify a region of stability, as well as a rate of convergence for the optimization method---assuming that such a region indeed exists. 

\begin{remark}
For the sake of simplicity we have focused on algorithms that take the form of \Cref{eq:HGD} (i.e. $\hgd$), but in fact, our reduction can be applied more broadly for the following class of ``historical'' gradient-based methods:

\begin{equation}
    x_{k + T} = \sum_{i=1}^T b_i x_{k + T - i} - \eta \sum_{i=1}^T a_i F(x_{k+T - i}).
\end{equation}
\end{remark}

\subsection{Alternating Optimistic Gradient Descent}

In the context of min-max optimization, the analysis we presented for $\ogd$ applies for the \emph{simultaneous} dynamics, i.e. both players update their strategies at the same time based on the information they have about the current state. However, there are many scenarios in which the dynamics proceed in an \emph{alternating} fashion: The second player takes into account the update of the opponent before performing the optimization step. The purpose of this subsection is to employ a frequency-domain representation in order to quantitatively compare these dynamics. In particular, we will focus solely on the usual \emph{bilinear} setting, i.e. $f(x,y) = x^{\top} \mathbf{A} y$. For simplicity, it will be tacitly assumed that the real matrix $\mathbf{A}$ is $n \times n$ (square) and non-singular; this hypothesis comes without loss of generality since we can always reduce to this case via a linear transformation of the original system. The simultaneous and the alternating dynamics (respectively) take the following form:

\begin{align}
\label{eq:sim-ogd}
\tag{Sim-OGD}
\begin{split}
 x_{k+1} &= x_k - 2\eta \mathbf{A} y_{k} + \eta \mathbf{A} y_{k-1}; \\
 y_{k+1} &= y_k + 2\eta \mathbf{A}^{\top} x_k - \eta \mathbf{A}^{\top} x_{k-1}.
\end{split}
\end{align}

\begin{align}
\label{eq:alt-ogd}
\tag{Alt-OGD}
\begin{split}
 x_{k+1} &= x_k - 2\eta \mathbf{A} y_{k} + \eta \mathbf{A} y_{k-1}; \\
 y_{k+1} &= y_k + 2\eta \mathbf{A}^{\top} x_{k+1} - \eta \mathbf{A}^{\top} x_{k}.
\end{split}
\end{align}

Let us focus on the alternating dynamics. In particular, if we transfer \eqref{eq:alt-ogd} in the $z$-domain, we arrive at the following conclusion (see \Cref{appendix:proof-alt-ogd}):

\begin{restatable}{proposition}{altch}
    \label{proposition:alt-ogd}
    Let $\alpha(z)$ be the characteristic polynomial of matrix $\mathbf{A} \mathbf{A}^{\top}$. Then, the characteristic equation of \eqref{eq:alt-ogd} can be expressed as 
    
    \begin{equation}
        \label{eq:alt-ogd-ch}
        \det( (z-1)^2 \mathbf{I}_n - \eta^2 \mathbf{A} \mathbf{A}^{\top} (2z-1)(-2+z^{-1})) = 0 \iff \alpha \left(-z \left(\frac{1}{\eta} \frac{z-1}{2z-1} \right)^2 \right) = 0.
    \end{equation}
\end{restatable}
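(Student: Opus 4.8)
The plan is to transfer \eqref{eq:alt-ogd} to the frequency domain and then eliminate one of the two signals, reducing the analysis to a single matrix equation whose determinant yields the characteristic equation. The crucial feature distinguishing the alternating from the simultaneous dynamics is the appearance of $x_{k+1}$ in the $y$-update; under the $Z$-transform this becomes a \emph{forward} shift, contributing a factor of $z$ rather than $z^{-1}$, and this is precisely what breaks the symmetry between the two players.

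Concretely, applying the $Z$-transform to the first line of \eqref{eq:alt-ogd} and using the time-delay property gives $(z-1) X(z) = \eta \mathbf{A}(-2 + z^{-1}) Y(z)$, while the second line yields $(z-1) Y(z) = \eta \mathbf{A}^{\top}(2z - 1) X(z)$; note the asymmetry between the scalar multipliers $-2 + z^{-1}$ and $2z - 1$, which is the frequency-domain footprint of alternation. Solving the second relation for $Y(z)$ and substituting into the first, I would obtain $\left( (z-1)^2 \mathbf{I}_n - \eta^2 (2z-1)(-2 + z^{-1}) \mathbf{A}\mathbf{A}^{\top} \right) X(z) = 0$. A nontrivial trajectory therefore exists only when the determinant of this matrix vanishes, which is exactly the left-hand side of \eqref{eq:alt-ogd-ch}.

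It remains to establish the equivalence with the $\alpha(\cdot)$ form. Since $\mathbf{A}\mathbf{A}^{\top}$ is symmetric, I would diagonalize it and write the determinant as $\prod_{i=1}^n \left( (z-1)^2 - \eta^2 (2z-1)(-2 + z^{-1}) \sigma_i^2 \right)$, where the $\sigma_i^2$ are the eigenvalues of $\mathbf{A}\mathbf{A}^{\top}$, equivalently the roots of $\alpha$. This product vanishes precisely when some $\sigma_i^2$ equals $(z-1)^2 / \bigl( \eta^2 (2z-1)(-2+z^{-1}) \bigr)$. The key algebraic simplification is the identity $(2z-1)(-2+z^{-1}) = -(2z-1)^2/z$, which recasts the target value as $-z \bigl( \eta^{-1}(z-1)/(2z-1) \bigr)^2$; since $\alpha(\mu) = \prod_i (\mu - \sigma_i^2)$ vanishes exactly when $\mu$ is one of the eigenvalues, the determinant condition is equivalent to $\alpha\bigl( -z(\eta^{-1}(z-1)/(2z-1))^2 \bigr) = 0$, as claimed.

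The main obstacle is bookkeeping rather than conceptual: one must track the forward shift carefully (to obtain $2z-1$ rather than a symmetric delayed term) and verify the rational identity $(2z-1)(-2+z^{-1}) = -(2z-1)^2/z$ cleanly. Some care is needed so that the elimination step neither introduces nor suppresses spurious roots at the degenerate points $z = 1$ or $z = 1/2$, where the scalar multipliers vanish; a brief separate check at these points confirms that the two forms in \eqref{eq:alt-ogd-ch} agree as an identity of rational functions.
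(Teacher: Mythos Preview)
Your proposal is correct and follows essentially the same approach as the paper's proof: transferring \eqref{eq:alt-ogd} to the $z$-domain to obtain the pair $(z-1)X(z)=\eta\mathbf{A}(-2+z^{-1})Y(z)$ and $(z-1)Y(z)=\eta\mathbf{A}^{\top}(2z-1)X(z)$, then decoupling. The paper's proof is terser (it simply says ``decouple these equations''), whereas you spell out the elimination step, the diagonalization, and the rational identity $(2z-1)(-2+z^{-1})=-(2z-1)^2/z$ that converts the determinant condition into the $\alpha(\cdot)$ form; all of this is correct and fills in exactly what the paper omits.
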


For comparison, the characteristic equation of the simultaneous dynamics takes the following form:

\begin{proposition}[\cite{anagnostides2020robust}]
    \label{proposition:sim-ogd}
    Let $\alpha(z)$ be the characteristic polynomial of matrix $\mathbf{A} \mathbf{A}^{\top}$. Then, the characteristic equation of \eqref{eq:sim-ogd} can be expressed as 
    
    \begin{equation}
        \label{eq:sim-ogd-ch}
        \det( (z-1)^2 \mathbf{I}_n - \eta^2 \mathbf{A} \mathbf{A}^{\top} (2-z^{-1})(-2+z^{-1})) = 0 \iff \alpha \left(- \left(\frac{1}{\eta} \frac{z^2-z}{2z-1} \right)^2 \right) = 0.
    \end{equation}
\end{proposition}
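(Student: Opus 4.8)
The plan is to transport the time-domain recursion \eqref{eq:sim-ogd} into the $z$-domain and eliminate one of the two players' variables. First I would apply the $Z$-transform to both update equations, discarding initial conditions since the characteristic equation concerns only the homogeneous dynamics. Writing $X(z), Y(z)$ for the transforms of $x_k, y_k$ and using the time-delay property $\mathcal{Z}\{u_{k-r}\} = z^{-r}U(z)$, the first line gives $(z-1)X = \eta(-2+z^{-1})\mathbf{A} Y$ and the second gives $(z-1)Y = \eta(2-z^{-1})\mathbf{A}^{\top} X$. The crucial structural point distinguishing this from the alternating scheme is that here both players react to the \emph{current} iterate of the opponent, so the two delay factors $(-2+z^{-1})$ and $(2-z^{-1})$ are symmetric up to sign.

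Next I would eliminate $Y$. Solving the second relation for $Y$ (valid for $z\neq 1$) and substituting into the first yields $(z-1)^2 X = \eta^2(2-z^{-1})(-2+z^{-1})\mathbf{A}\mathbf{A}^{\top} X$, so a nontrivial $X$ exists precisely when $\det\bigl((z-1)^2\mathbf{I}_n - \eta^2(2-z^{-1})(-2+z^{-1})\mathbf{A}\mathbf{A}^{\top}\bigr) = 0$, which is the left-hand side of \eqref{eq:sim-ogd-ch}. The excluded value $z=1$ is checked separately: there both relations force $\mathbf{A}Y = 0$ and $\mathbf{A}^{\top} X = 0$, so nonsingularity of $\mathbf{A}$ rules out a nontrivial solution, consistent with the determinant evaluating to $\eta^{2n}\det(\mathbf{A}\mathbf{A}^{\top})\neq 0$ at $z=1$.

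For the second equivalence I would factor the scalar $c(z) := \eta^2(2-z^{-1})(-2+z^{-1}) = -\eta^2(2-z^{-1})^2$ out of the determinant, writing it as $c(z)^n \det\bigl(\tfrac{(z-1)^2}{c(z)}\mathbf{I}_n - \mathbf{A}\mathbf{A}^{\top}\bigr) = c(z)^n \alpha\bigl(\tfrac{(z-1)^2}{c(z)}\bigr)$, where $\alpha(\mu) = \det(\mu\mathbf{I}_n - \mathbf{A}\mathbf{A}^{\top})$. A short simplification using $2 - z^{-1} = (2z-1)/z$ then shows $\tfrac{(z-1)^2}{c(z)} = -\bigl(\tfrac{1}{\eta}\tfrac{z^2-z}{2z-1}\bigr)^2$, which is exactly the argument of $\alpha$ in \eqref{eq:sim-ogd-ch}.

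The one delicate point is that this factorization is only legitimate where $c(z)\neq 0$ and $z\neq 0,1$; at the pole $z=1/2$ the argument of $\alpha$ is undefined. The clean way to make the ``$\iff$'' rigorous at the degenerate frequencies is to clear all denominators on both sides: multiplying the left-hand determinant by $z^{2n}$ turns it into $\det\bigl((z^2-z)^2\mathbf{I}_n + \eta^2(2z-1)^2\mathbf{A}\mathbf{A}^{\top}\bigr)$ (using $z^2(2-z^{-1})(-2+z^{-1}) = -(2z-1)^2$), while multiplying $\alpha\bigl(-(\tfrac{1}{\eta}\tfrac{z^2-z}{2z-1})^2\bigr)$ by $(-\eta^2(2z-1)^2)^n$ produces the \emph{same} polynomial. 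Since $z^{2n}$ and $(-\eta^2(2z-1)^2)^n$ are nonzero polynomials, the two equations share identical root sets, establishing the claim. I expect this denominator-clearing step, rather than the algebra itself, to be the main thing to handle carefully, as it is what licenses the equivalence at the poles $z=0,1/2$.
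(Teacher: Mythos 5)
Your proposal is correct and takes essentially the same route as the paper, which proves the analogous alternating statement (\Cref{proposition:alt-ogd}) by transferring the dynamics to the $z$-domain via the time-delay property, decoupling the two equations, and reading off the determinant condition (for the simultaneous case itself the paper just cites \cite{anagnostides2020robust}). Your additional denominator-clearing step, verifying that $z \in \{0, 1/2, 1\}$ are not roots and that $z^{2n}$ and $\left(-\eta^2(2z-1)^2\right)^n$ produce the same cleared polynomial $\det\left((z^2-z)^2 \mathbf{I}_n + \eta^2 (2z-1)^2 \mathbf{A}\mathbf{A}^{\top}\right)$, is a sound extra precision that the paper's terse treatment leaves implicit.
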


This characteristic equation should look very reminiscent to the transfer function we derived for $\ogd$ (\Cref{tab:transfer_function}). The behavior of \Cref{eq:alt-ogd-ch} and \Cref{eq:sim-ogd-ch} is depicted in \Cref{fig:spectrum_transformation}. In particular, it follows that \eqref{eq:alt-ogd} is stable if for all the eigenvalues $\Lambda$ of $\mathbf{A} \mathbf{A}^{\top}$, $\eta \sqrt{\lambda} < 2/3, \forall \lambda \in \Lambda$. In turn, this implies the following theorem:

\begin{theorem}
    The alternating Optimistic Gradient Descent dynamics \eqref{eq:alt-ogd} converge linearly to the unique Nash equilibrium $(0, 0)$ if $\eta < 2/(3\gamma)$, where $\gamma$ is the spectral norm of $\mathbf{A}$.
\end{theorem}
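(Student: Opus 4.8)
The plan is to reduce the entire claim to a root-location analysis of a single cubic polynomial, leveraging the characteristic equation already derived in \Cref{proposition:alt-ogd}. Recall that \eqref{eq:alt-ogd} converges linearly to $(0,0)$ precisely when every root $z$ of its characteristic equation lies strictly inside the unit circle, the rate being governed by the largest modulus among these roots. By \Cref{proposition:alt-ogd} the characteristic equation factorizes through the eigenvalues $\lambda$ of $\mathbf{A}\mathbf{A}^{\top}$, so it suffices to ensure that, for each such $\lambda > 0$ (note $\lambda > 0$ since $\mathbf{A}$ is non-singular), every root $z$ of
\[
-z\left(\frac{1}{\eta}\,\frac{z-1}{2z-1}\right)^2 = \lambda
\]
satisfies $|z| < 1$.

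The first step is to clear denominators. Multiplying by $\eta^2(2z-1)^2$ and writing $c^2 := \eta^2 \lambda$ yields the real monic cubic
\[
p(z) = z^3 + (4c^2 - 2)\,z^2 + (1 - 4c^2)\,z + c^2 .
\]
Before proceeding I would check that this clearing introduces no spurious modes: the factor $(2z-1)^2$ vanishes only at $z = 1/2$, where the left-hand side above equals $-1/8 \neq 0$, so $p$ captures exactly the relevant roots and no extraneous one at $z=1/2$ is created.

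The second step is to apply a standard discrete-time stability test—Jury's criterion—to $p$. Evaluating at the two anchoring real points gives $p(1) = c^2 > 0$ (automatic since $\eta, \lambda > 0$) and $p(-1) = 9c^2 - 4$; the odd-degree Jury condition $(-1)^3 p(-1) > 0$ demands $p(-1) < 0$, which is exactly $c^2 < 4/9 \iff \eta\sqrt{\lambda} < 2/3$. The remaining Jury conditions, namely $|a_0| = c^2 < 1$ and the determinantal inequality $1 - c^4 > |4c^4 + 2c^2 - 1|$, must then be verified on the range $c \in (0, 2/3)$; both are implied by the binding constraint, so the cubic is stable if and only if $\eta\sqrt{\lambda} < 2/3$.

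Finally, since $\eta\sqrt{\lambda} < 2/3$ must hold simultaneously across all eigenvalues $\lambda$ of $\mathbf{A}\mathbf{A}^{\top}$, the binding eigenvalue is the largest one, $\lambda_{\max} = \gamma^2$, where $\gamma = \|\mathbf{A}\|$ is the spectral norm; hence the stability region is exactly $\eta\gamma < 2/3$, i.e. $\eta < 2/(3\gamma)$, and the geometric decay of the iterates (linear convergence to the unique Nash equilibrium $(0,0)$) follows from all characteristic roots lying strictly inside the unit disk. The main obstacle I anticipate is the bookkeeping in the determinantal Jury condition: since $4c^4 + 2c^2 - 1$ changes sign inside $(0, 2/3)$, the absolute value splits into two cases, each of which must be checked separately to confirm that $p(-1) < 0$ is genuinely the binding constraint and that no tighter restriction on $c$ is hidden in the higher-order condition.
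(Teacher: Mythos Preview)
Your argument is correct and follows the same high-level reduction as the paper: both start from \Cref{proposition:alt-ogd}, factor the characteristic equation through the eigenvalues of $\mathbf{A}\mathbf{A}^{\top}$, and then check that the induced roots lie inside the unit disk precisely when $\eta\sqrt{\lambda}<2/3$. The difference is in that final verification. The paper does not actually carry out an analytic root-location analysis; it simply asserts the threshold and supports it with the spectrum-transformation figure (\Cref{fig:spectrum_transformation}). You instead clear denominators to the explicit cubic $p(z)=z^3+(4c^2-2)z^2+(1-4c^2)z+c^2$ and apply Jury's criterion, showing that $p(-1)<0$ is the binding constraint while the remaining conditions ($|a_0|<1$ and $1-c^4>|4c^4+2c^2-1|$) are slack on $(0,2/3)$. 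This buys you a fully rigorous proof where the paper relies on a picture, and the ``if and only if'' you obtain is slightly stronger than the one-sided statement of the theorem. One minor slip: $p(1/2)=1/8$, not $-1/8$ (your cubic equals $z(z-1)^2+c^2(2z-1)^2$), but the conclusion that $z=1/2$ is not a spurious root is unaffected.
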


\begin{figure}
    \centering
    \includegraphics[scale=0.4]{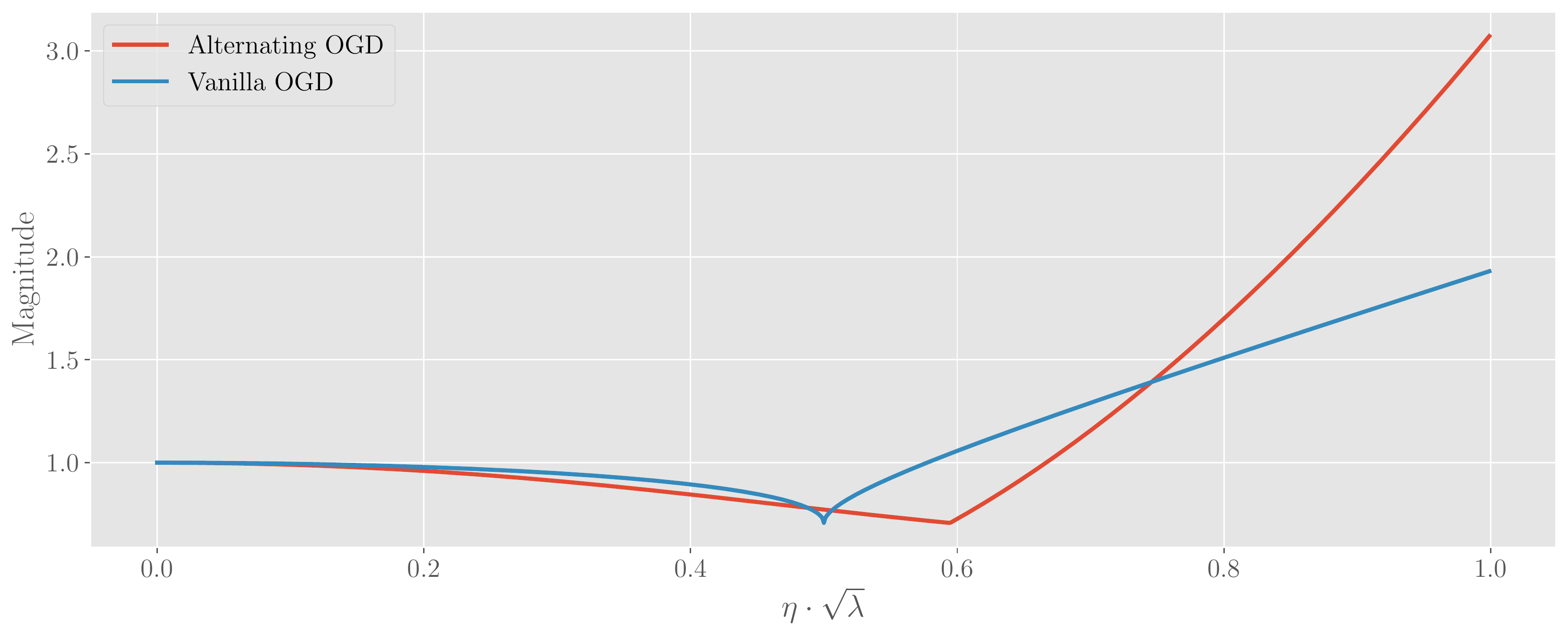}
    \caption{The spectrum transformation of \eqref{eq:alt-ogd} and \eqref{eq:sim-ogd}. More precisely, for a fixed learning rate $\eta$ and a value $\lambda > 0$ such that $\alpha(\lambda) = 0$, this figures illustrates the induced solution in the characteristic equation, as predicted by \Cref{proposition:alt-ogd} and \Cref{proposition:sim-ogd}. Notice that the $x$-axis represents the value of $\eta \sqrt{\lambda}$, while the $y$-axis indicates the maximum magnitude of the induced solutions.}
    \label{fig:spectrum_transformation}
\end{figure}

For the simultaneous dynamics the region of stability corresponds to $\eta < 1/(\sqrt{3} \gamma)$ \cite{anagnostides2020robust}. Interestingly, \Cref{fig:spectrum_transformation} indicates that the alternating dynamics typically exhibit faster convergence, as corroborated by some empirical data (\Cref{fig:exp-alt}).

\begin{figure}[!ht]
    \centering
    \includegraphics[scale=0.4]{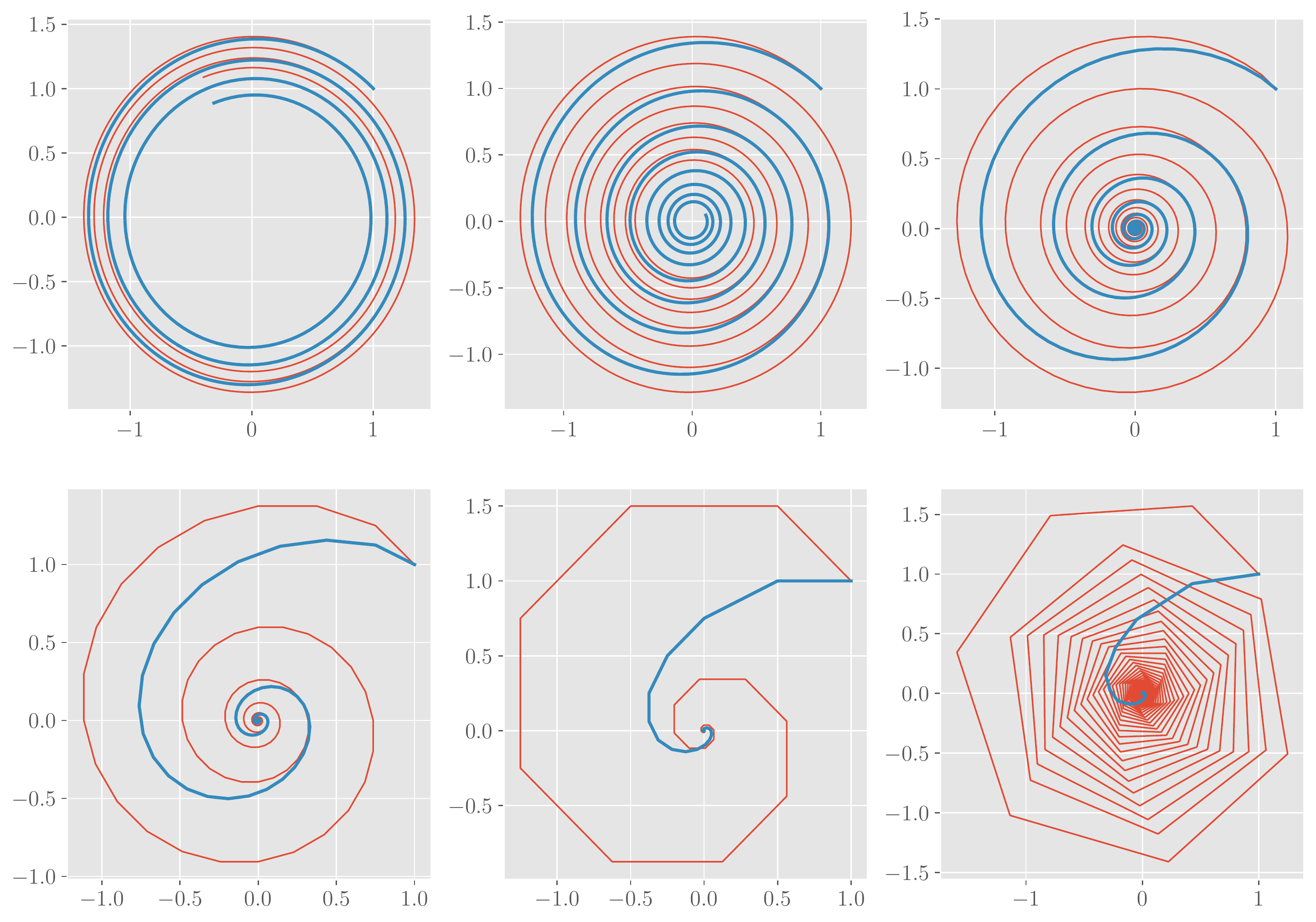}
    \caption{Alternating vs simultaneous dynamics for the one-dimensional objective function $x^{\top} y$ under different learning rates. In particular, these trajectories correspond to $\eta \in \{0.02, 0.05, 0.1, 0.25, 0.5, 0.57 \}$, from top-left, top-middle, until bottom-right respectively. The overall conclusion is that the alternating dynamics converge considerably faster to the equilibrium point.}
    \label{fig:exp-alt}
\end{figure}

\subsection{Noise in the Operator}
\label{section:noise}

Another important advantage of the currently employed framework is that it easily allows to extend the characterization under some noise in the observed value of the operator. In particular, in this section we will posit the \emph{relative deterministic noise} model, wherein instead of observing the value $F(x_k)$, the optimization algorithm has access to $F(x_k) + r_k$, where $||r_k|| \leq \delta || F(x_k)||$ for some noise parameter $\delta > 0$. That is, the magnitude of the noise depends on the norm of the operator, evaluated at some given point. An important feature of this model is that the noise may be completely \emph{adversarial}, subject to satisfying the previous constraint.

We will explain how this particular noise model can be very naturally incorporated within our framework. Specifically, first observe that the noisy observation can be seen as a cascade of the nonlinearity $P$ and an operator $\Delta$ which maps the output $v_k$ of $P$ to a signal $s$ such that $s_k = v_k + r_k$, in turn implying that $||s_k - v_k|| = ||r_k|| \leq \delta ||v_k||$. Thus, we can easily derive the gain of the noisy nonlinearity $P_{\Delta}'$ (after applying the linear shift transform):

\begin{restatable}{claim}{gainoisy}
    \label{claim:gain_noisy}
    The gain of the noisy nonlinearity $P_{\Delta}'$ is at most $(L - \mu)/2 + L \delta$.
\end{restatable}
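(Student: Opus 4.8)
The plan is to treat the noisy nonlinearity $P_{\Delta}'$ as an additive perturbation of the noiseless (shifted) nonlinearity $P'$, and then invoke the subadditivity of the $\ell_2$-gain. First I would isolate the one structural observation that requires care: the linear shift transformation subtracts a multiple $h = (L+\mu)/2$ of the input signal $u$ from the output, and since this correction depends only on $u$ and not on the output of the plant, it commutes with the additive action of $\Delta$. Concretely, writing $v$ for the output of $P$ (so $v_k = F(u_k + x^*)$) and $s_k = v_k + r_k$ for the corrupted signal, the shifted noisy nonlinearity decomposes as $P_{\Delta}'(u) = P'(u) + r$, where $P'$ is the shifted noiseless nonlinearity and $r$ is the noise signal. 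The subtle point here is that the relative-noise constraint $||r_k|| \le \delta ||v_k||$ is stated with respect to the \emph{pre-shift} output $v$, so the perturbation surviving the shift is exactly $r$ measured against $v$, not against $P'(u)$.

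Granting this decomposition, the triangle inequality for the $\ell_2$ signal norm gives, for every square-summable input $u$,
\[
  ||P_{\Delta}'(u)|| \le ||P'(u)|| + ||r||.
\]
The first term is handled by the gain of the noiseless shifted nonlinearity, which under \Cref{assumption:sector} is $(L-\mu)/2$; this is exactly the bound that makes the threshold $2/(L-\mu)$ in \Cref{theorem:main_theorem} the correct small-gain condition, so $||P'(u)|| \le \frac{L-\mu}{2}||u||$ may be invoked directly from the framework.

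It then remains to estimate the noise signal. Summing the pointwise relative bound yields $||r||^2 = \sum_k ||r_k||^2 \le \delta^2 \sum_k ||v_k||^2 = \delta^2 ||v||^2$, so $||r|| \le \delta ||v|| = \delta ||P(u)||$. To convert this into a bound in $||u||$, I would use that co-coercivity implies $F$ is $L$-Lipschitz (as noted after the co-coercivity definition), together with $F(x^*) = 0$ from \Cref{assumption:unique}: this gives $||v_k|| = ||F(u_k + x^*) - F(x^*)|| \le L ||u_k||$, hence $||v|| \le L ||u||$ and therefore $||r|| \le \delta L ||u||$. Combining the three estimates produces $||P_{\Delta}'(u)|| \le \left( \frac{L-\mu}{2} + L\delta \right) ||u||$ for all $u$, which is the claimed gain bound.

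The only genuinely delicate step is the first one, namely verifying that the linear shift transformation leaves the additive noise intact so that $P_{\Delta}'$ splits cleanly as $P' + r$, and then applying the relative bound to the correct (pre-shift) signal $v$ rather than to $P'(u)$. Everything after that is subadditivity of the gain plus an elementary Lipschitz estimate, with no new control-theoretic machinery required. This is precisely the modularity the paper advertises: the controller analysis is reused verbatim, and only the gain of the nonlinear component is re-derived to absorb the adversarial perturbation.
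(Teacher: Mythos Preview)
Your proposal is correct and follows essentially the same argument as the paper: decompose the shifted noisy output as the shifted noiseless output plus the raw noise $r$, apply the triangle inequality, bound the first piece by $(L-\mu)/2$ via the sector bound, and bound the second by $\delta L$ using the relative-noise constraint together with the $L$-Lipschitz property of $F$. The only cosmetic difference is that the paper writes the inequalities pointwise in $k$ whereas you pass directly to the $\ell_2$ signal norm, but the content is identical.
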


See \Cref{appendix:proof_gain_noisy} for the proof. We should note that the bound on the noiseless gain is $(L - \mu)/2$, and as a result, the presence of $\delta$-noise may slightly increase the gain of the nonlinearity; in the context of the circle criterion, observe that as $\delta$ increases the circle gradually shrinks, but importantly, the controller remains invariant. Indeed, our previous analysis regarding the stability and the gain of each controller can be invoked in order to establish the following results:

\begin{theorem}[Gradient Descent under Noise]
    Consider an operator $F$ which satisfies \Cref{assumption:sector} and \Cref{assumption:unique}. Moreover, assume that the observed operator is corrupted with $\delta$-noise such that $\delta < \mu/L$. Then, if $\eta = 1/L$, the Gradient Descent method converges linearly to the fixed point of $F$ with rate $\rho$, for any $\rho > 1 - \kappa^{-1} + \delta$.
\end{theorem}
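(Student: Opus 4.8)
The plan is to reuse the modular structure established for the noiseless Gradient Descent analysis, replacing only the gain bound on the nonlinearity with the noisy bound from \Cref{claim:gain_noisy}. Since the controller $K$ of $\gd$ is unaffected by the noise (the noise enters through the plant $P$, not the optimization rule), the transfer function remains $K(z) = -\eta/(z-1)$ from \Cref{proposition:GD-TF}, and the complementary sensitivity function $K'(\rho z)$ retains exactly the form computed in the noiseless $\eta = 1/L$ theorem. The key conceptual point is that the small gain theorem (\Cref{theorem:main_theorem}) factors the analysis into (i) the controller gain $\|K'_\rho\|$ and (ii) the plant gain; only the latter changes when we introduce $\delta$-noise.

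First I would record that with $\eta = 1/L$, the computation from the noiseless $\eta = 1/L$ proof carries over verbatim to give
\begin{equation*}
    K'(\rho z) = -\frac{\eta}{\rho z - \frac{1}{2}(1 - \kappa^{-1})},
\end{equation*}
so $K'(\rho z)$ is stable for all $\rho > (1 - \kappa^{-1})/2$, and \Cref{theorem:H-infinity} yields
\begin{equation*}
    \|K'_\rho\| = \frac{1}{L}\,\frac{1}{\rho - \frac{1}{2}(1 - \kappa^{-1})}.
\end{equation*}
Next, rather than invoking \Cref{theorem:main_theorem} with the plant gain $(L-\mu)/2$, I would use the noisy plant gain $(L-\mu)/2 + L\delta$ supplied by \Cref{claim:gain_noisy}. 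The small gain condition then becomes $\|K'_\rho\|\cdot\big((L-\mu)/2 + L\delta\big) < 1$, i.e.
\begin{equation*}
    \frac{1}{L}\,\frac{1}{\rho - \frac{1}{2}(1 - \kappa^{-1})}\left(\frac{L - \mu}{2} + L\delta\right) < 1.
\end{equation*}

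The remaining work is the elementary algebra of solving this inequality for $\rho$. Writing $\frac{L-\mu}{2L} = \frac{1}{2}(1 - \kappa^{-1})$, the left-hand side simplifies so that the condition reads $\frac{1}{2}(1-\kappa^{-1}) + \delta < \rho - \frac{1}{2}(1-\kappa^{-1})$, which rearranges to $\rho > 1 - \kappa^{-1} + \delta$. I would also check that this threshold exceeds the stability threshold $(1-\kappa^{-1})/2$, so that $K'(\rho z)$ is indeed stable throughout the claimed range, and confirm that the constraint $\delta < \mu/L = \kappa^{-1}$ guarantees the target rate $\rho = 1 - \kappa^{-1} + \delta$ stays in $(0,1)$, so linear convergence is meaningful.

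I do not anticipate a genuine obstacle here, since the heavy lifting is entirely contained in \Cref{claim:gain_noisy} (whose proof is deferred to the appendix) and in the already-established noiseless $\gd$ analysis. The only point requiring mild care is bookkeeping the factor of $L$ in the noisy gain term against the $1/L$ in the controller gain, to ensure the $\delta$ contribution enters additively as $+\delta$ in the final rate rather than scaled by some spurious factor of $\kappa$. The modularity emphasized throughout the paper is precisely what makes this extension immediate: the controller analysis is untouched, and only the plant side of the small gain inequality absorbs the noise.
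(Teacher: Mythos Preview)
Your proposal is correct and matches exactly the approach the paper intends: the paper explicitly omits the proof, stating that it follows similarly to the previous results, and your argument---reusing the noiseless $\eta = 1/L$ controller analysis verbatim and substituting the noisy plant gain $(L-\mu)/2 + L\delta$ from \Cref{claim:gain_noisy} into the small gain condition---is precisely that modular extension. The algebra you sketch is clean and correct, and your sanity checks on the stability threshold and the role of $\delta < \kappa^{-1}$ are apt.
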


\begin{theorem}[Optimistic Gradient Descent under Noise]
    Consider an operator $F$ which satisfies \Cref{assumption:sector} and \Cref{assumption:unique}. Moreover, assume that the observed operator is corrupted with $\delta$-noise such that $\delta \leq \mu/(3L)$. Then, if $\eta = 1/(2L)$, the Optimistic Gradient Descent method converges linearly to the fixed point of $F$ with rate $\rho$, for any $\rho > 1 - \kappa^{-1}/4$.
\end{theorem}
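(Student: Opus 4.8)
The plan is to exploit the \emph{modularity} of the framework: since the relative deterministic noise is absorbed entirely into the nonlinearity, the controller-side analysis of the noiseless $\ogd$ method carries over verbatim, and only the small-gain \emph{budget} is tightened. Concretely, by \Cref{proposition:TF-OGD} the controller transfer function $K(z) = \eta(1-2z)/(z^2-z)$, and hence its complementary sensitivity $K'(\rho z)$, are unaffected by $\delta$; the sole modification is that \Cref{claim:gain_noisy} replaces the nonlinearity gain $(L-\mu)/2$ by $(L-\mu)/2 + L\delta$. Feeding this gain into the small gain theorem in place of the noiseless one, the conclusion of \Cref{theorem:main_theorem} continues to hold provided $\|K'_\rho\|\big((L-\mu)/2 + L\delta\big) < 1$, i.e. provided the noisy budget $1/((L-\mu)/2 + L\delta)$ still dominates the controller gain.

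First I would reuse the stability analysis of $K'(\rho z)$ for $\eta = 1/(2L)$. Writing $\lambda = \kappa^{-1}$, the poles are the scaled roots of $z^2 + \frac{\lambda-1}{2} z - \frac{1+\lambda}{4} = 0$, namely $z_{0,1} = \frac14\big(1-\lambda \pm \sqrt{\lambda^2+2\lambda+5}\big)$; since $\sqrt{\lambda^2+2\lambda+5} < 3$ on $(0,1)$ one gets $\max\{|z_0|,|z_1|\} < 1 - \lambda/4$, so $K'(\rho z)$ is stable for every $\rho \ge 1 - \lambda/4$. Crucially, this computation does not involve $\delta$ at all, so the stability threshold---and hence the claimed rate $\rho > 1 - \kappa^{-1}/4$---is inherited unchanged from the noiseless case. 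Next I would reuse the gain computation: by \Cref{theorem:H-infinity} the gain equals $\sup_\omega |K'(\rho e^{j\omega})|$, and the same elementary argument showing that the squared magnitude attains its maximum at $\omega \in \{0,\pi\}$ gives $\|K'_\rho\| = \max\{|K'(\rho)|, |K'(-\rho)|\}$ at $\rho = 1-\lambda/4$.

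The remaining---and main---step is to verify the tightened budget $\max\{|K'(\rho)|,|K'(-\rho)|\}\big((L-\mu)/2 + L\delta\big) < 1$. Substituting $\rho = 1 - \lambda/4$ yields the closed forms $|K'(\rho)| = \frac{4(2-\lambda)}{L(4-2\lambda-\lambda^2)}$ and $|K'(-\rho)| = \frac{4(6-\lambda)}{L(20-22\lambda+3\lambda^2)}$, so each of the two conditions reduces, after clearing denominators, to a polynomial inequality in $\lambda$ and $\delta$. The point is that the noiseless versions hold with a strictly positive slack, and the additional term contributed by the noise is exactly $|K'(\pm\rho)| \cdot L\delta$; the whole argument therefore amounts to checking that $L\delta$ never exceeds this slack. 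The hard part---and the place where the threshold $\delta \le \mu/(3L)$ must be used---is to control the \emph{ratio} of the noise term to the available slack uniformly over $\lambda \in (0,1)$, since as $\lambda \to 1$ both the slack and the controller gain degrade and a naive bound becomes lossy; carrying this out reduces to verifying two explicit univariate polynomial inequalities by elementary calculus, in direct analogy with the lemmas invoked in the proof of \Cref{theorem:OGD}. Once both inequalities are established, the noisy small gain theorem delivers linear convergence with the stated rate.
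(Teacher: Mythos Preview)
Your proposal is correct and mirrors exactly the route the paper indicates (the paper omits the proof, stating it ``follows similarly to our previous results''): absorb the noise into the nonlinearity via \Cref{claim:gain_noisy}, leave the $\ogd$ controller analysis with $\eta=1/(2L)$ untouched, and re-run the small gain argument with the tightened budget $(L-\mu)/2 + L\delta$ in place of $(L-\mu)/2$. One small remark: the sentence ``the stability threshold---and hence the claimed rate---is inherited unchanged'' slightly overstates things, since the rate is governed jointly by stability \emph{and} the gain inequality; but you immediately proceed to verify the latter at $\rho = 1-\lambda/4$, so the argument is sound.
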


We omit the proofs of these theorems since they follow similarly to our previous results. Naturally, analogous results apply for the generalized $\ogd$ method along the regime of parameters previously considered. 

\section{Future Directions}

In conclusion, we have employed a simple and robust frequency-domain framework for analyzing ``historical'' gradient-based methods. In terms of future directions, we would be interested to see whether the proposed framework can be extended or modified for the following settings:

\begin{itemize}
    \item ``Two-step'' processes such as Korpelevich's Extra-Gradient method \cite{Korpelevich1976TheEM}, and ``asymmetric'' dynamics such as the alternating updates (beyond linear landscapes).
    \item The boundary of monotonicity; for $\mu = 0$, \Cref{theorem:main_theorem} (and subsequently the small gain theorem) are not applicable. Although one could use an $\epsilon$-monotone regularizer and then take the limit $\epsilon \to 0$ (as in \cite{DBLP:journals/siamjo/LessardRP16}), it is unclear how such an approach can distinguish between $\gd$ and $\ogd$ in the context of zero-sum games.
    \item Time-varying processes: For example, can we provide a characterization for $\eta(t) = \eta_0/t$?
    \item Beyond first-order methods: Can we analyze algorithms such as Newton's method?
\end{itemize}

\paragraph{Acknowledgments.} We are grateful to the anonymous reviewers at SOSA for insightful comments and suggestions. We also thank Guodong Zhang for useful pointers in the literature.

\bibliography{./paper.bib}

\appendix

\section{Background on Nonlinear Control Theory}
\label{section:background}

Let $\ell_{2e}^p$ be the space of sequences $(x_0, x_1, \dots)$ such that $x_i \in \mathbb{R}^p$; the superscript will be typically omitted as it will be clear from the context. For a \emph{square-summable} signal $u \in \ell_2 \subseteq \ell_{2e}$, the $\ell_2$-norm $|| \cdot ||_{\ell_2}$ is defined as

\begin{equation}
    ||u||_{\ell_2} = \sqrt{\sum_{i=0}^{\infty} u_i^{\top} u_i} < \infty.
\end{equation}

We will say that a mapping (or interchangeably a \emph{system}) $H : \ell_{2e}^p \to \ell_{2e}^q$ is \emph{finite-gain} $\ell_2$-stable if there exists a (non-negative) constant $\gamma$ such that 

\begin{equation}
\label{eq:finite_gain}
    || H u ||_{\ell_2} \leq \gamma || u ||_{\ell_2},    
\end{equation}
for all $u \in \ell_2$, where $Hu$ represents the output of the system $H$ under the input $u$. The property of finite-gain stability will always be implied in the $\ell_2$-norm throughout this work, while for notational convenience we will simply (overload) write $||u ||$ to denote the $\ell_2$-norm of a signal $u$. It should be noted that---in the literature of control theory---the definition of finite-gain stability sometimes incorporates a \emph{bias} term $b \geq 0$ in \eqref{eq:finite_gain} (see \cite{Khalil:1173048}). Notice that under our definition a finite-gain stable operator $H$ has to map zero inputs to zero outputs. The minimal $\gamma \geq 0$ which satisfies \eqref{eq:finite_gain}, for all $u \in \ell_2$, will be referred to as the \emph{gain} of the operator $H$, and it will be denoted with $||H||$, assuming that such finite $\gamma < \infty$ exists.

\subsection{Quadratic Sector Bound}

A \emph{quadratic sector bound} (QSB) for a system with input/output pair $(u, v)$ (respectively) is a constraint of the form

\begin{equation}
    \label{eq:qsb-general}
    \langle w_k, \mathbf{Q} w_k \rangle \geq 0, \quad \forall k \geq 0,
\end{equation}
where $w_k^{\top} = [u_k^{\top}, v_k^{\top}]$, and $\mathbf{Q}$ is some fixed symmetric matrix. We will say that a system lies inside the sector $\mathcal{S}(\mathbf{Q})$ when all of its input/output trajectories satisfy \eqref{eq:qsb-general}. Different choices of the $\mathbf{Q}$ matrix capture different properties of dynamical systems, specifying the sector geometry; see \cite{8263814}. In an optimization context, a $\mu$-monotone operator $F$ is associated with the sector $[\mu, + \infty]$, while a co-coercive operator with parameter $1/L$ is associated with the sector $[0, L]$. Naturally, a system which satisfies both $[\mu, \infty]$ and $[0, L]$ sectors lies in the sector $[\mu, L]$. For additional details we refer the interested reader to \cite{Khalil:1173048}.

To be more concrete, let us relate the concept of a (quadratic) sector bound (QSB)\footnote{A sector bound will always refer to a quadratic sector bound in this work.} with standard concepts in convex optimization. The upshot is that the usual convexity and smoothness assumptions can be cast as a particular QSB. To be more precise, let us first recall the following definitions:

\begin{definition}[Smoothness]
A continuously differentiable function $f: \mathbb{R}^d \to \mathbb{R}$ is $L$-smooth if for any $x, x' \in \mathbb{R}^d$, 

\begin{equation}
    ||\nabla f(x) - \nabla f(x') || \leq L || x - x'||.
\end{equation}
\end{definition}

\begin{definition}[Convexity]
A continuously differentiable function $f: \mathbb{R}^d \to \mathbb{R}$ is $\mu$-strongly convex if for any $x, x' \in \mathbb{R}^d$,

\begin{equation}
    f(x) \geq f(x') + \nabla f(x')^{\top} (x - x') + \frac{\mu}{2} || x - x'||^2.
\end{equation}
\end{definition}

Notice that for $\mu = 0$ this definition recovers the usual definition of convexity. Now let us denote with $\mathcal{F}(\mu, L)$ the set of continuously differentiable functions which are both $L$-smooth and $\mu$-strongly convex. For a function $f: \mathbb{R}^d \to \mathbb{R}$ such that  $f \in \mathcal{F}(\mu, L)$ it follows that for any $x, x' \in \mathbb{R}^d$, 

\begin{equation*}
    \mu ||x - x'||^2 \leq (\nabla f(x) - \nabla f(x'))^{\top} (x - x') \leq L || x - x'||^2.
\end{equation*}

The ratio $\kappa \define L/\mu \geq 1$ is called the \emph{condition ratio} of $f$; this terminology is used to distinguish the condition ratio of a function from the related notion of the \emph{condition number} of a matrix.\footnote{The connection is that if $f$ is twice differentiable, $\text{cond}(\nabla^2 f(x)) \leq \kappa$, for all $x \in \mathbb{R}^d$.} The following lemma reveals the connection with the concept of a QSB:

\begin{lemma}[QSB for Smooth-Convex Functions, \cite{DBLP:conf/cdc/BoczarLR15}]
\label{lemma:QSB}
Consider a continuously differentiable function $f : \mathbb{R}^d \to \mathbb{R}$ such that $f \in \mathcal{F}(\mu, L)$. Then, for any $x, x' \in \mathbb{R}^d$ the following property holds:

\begin{equation}
    \label{eq:QSB}
    \begin{bmatrix}
    x - x' \\
    \nabla f(x) - \nabla f(x')
    \end{bmatrix}^{\top}
    \begin{bmatrix}
    -2 \mu L \mathbf{I}_d & (L + \mu) \mathbf{I}_d \\
    (L + \mu) \mathbf{I}_d & -2\mathbf{I}_d
    \end{bmatrix}
    \begin{bmatrix}
    x - x'\\
    \nabla f(x) - \nabla f(x')
    \end{bmatrix}
    \geq 0.
\end{equation}
\end{lemma}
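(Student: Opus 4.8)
The plan is to unfold the block matrix quadratic form into a scalar inequality, and then reduce the claim to the well-known co-coercivity of the gradient of a convex and smooth function. Writing $p := x - x'$ and $q := \nabla f(x) - \nabla f(x')$, and expanding the quadratic form in \eqref{eq:QSB}, the left-hand side equals $-2\mu L \, ||p||^2 + 2(L+\mu) \langle p, q \rangle - 2 ||q||^2$. Dividing by $2$, the claimed inequality is therefore equivalent to
\begin{equation*}
(L + \mu) \langle p, q \rangle \geq \mu L \, ||p||^2 + ||q||^2 .
\end{equation*}

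Next I would reduce to the purely convex case via a quadratic shift. Define $g(x) := f(x) - \frac{\mu}{2} ||x||^2$. Since $f$ is $\mu$-strongly convex, $g$ is convex; and since $\nabla g(x) = \nabla f(x) - \mu x$, the gradient of $g$ is $(L-\mu)$-Lipschitz, so $g$ is $(L-\mu)$-smooth. The boundary case $\mu = L$ should be dispatched separately: there $g$ is affine, whence $q = \mu p$ and the target inequality holds with equality. Thus in what follows one may assume $\mu < L$, so that the smoothness parameter $L - \mu$ is strictly positive.

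The key step is to invoke the co-coercivity of the gradient of a convex, $(L-\mu)$-smooth function, namely
\begin{equation*}
\langle \nabla g(x) - \nabla g(x'), x - x' \rangle \geq \frac{1}{L - \mu} \, ||\nabla g(x) - \nabla g(x')||^2 .
\end{equation*}
If a self-contained derivation is preferred over citing this standard fact, it follows from the descent lemma applied to the convex function $y \mapsto g(y) - \langle \nabla g(x'), y \rangle$, which is minimized at $x'$: comparing its value at the minimizer against the smoothness upper bound evaluated at $x$ yields $g(x) - g(x') \geq \langle \nabla g(x'), p \rangle + \frac{1}{2(L-\mu)} ||\nabla g(x) - \nabla g(x')||^2$, and adding this to the same inequality with $x$ and $x'$ interchanged produces the displayed co-coercivity bound.

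Finally, substituting $\nabla g(x) - \nabla g(x') = q - \mu p$ into the co-coercivity inequality, clearing the denominator $L - \mu > 0$, and collecting terms gives precisely $(L+\mu)\langle p, q \rangle \geq \mu L \, ||p||^2 + ||q||^2$, which is the reformulated claim. I expect the only nontrivial ingredient to be the co-coercivity bound; the expansion of the quadratic form, the shift to $g$, and the final algebraic substitution are all routine bookkeeping. Hence the main obstacle is establishing Baillon–Haddad-type co-coercivity if one insists on a self-contained proof, although this is classical and may simply be cited.
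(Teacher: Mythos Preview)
Your proposal is correct and follows essentially the same route as the paper: define $g(x)=f(x)-\tfrac{\mu}{2}\|x\|^2$, apply co-coercivity of $\nabla g$ for the convex $(L-\mu)$-smooth function $g$, and substitute back to recover the quadratic sector bound. The only differences are cosmetic---you expand the block quadratic form explicitly, treat the degenerate case $\mu=L$, and sketch a self-contained derivation of co-coercivity---whereas the paper simply cites co-coercivity and states that rearranging yields \eqref{eq:QSB}.
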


\begin{proof}
First, let $g(x)$ be some convex and $L$-smooth continuously differentiable function. It is well-known that for all $x, x' \in \mathbb{R}^d$,

\begin{equation*}
    \label{eq:co-coercivity}
    (\nabla g(x) - \nabla g(x'))^{\top} (x - x') \geq \frac{1}{L} || \nabla g(x) - \nabla g(x') ||^2;
\end{equation*}
that is, $\nabla g$ is co-coercive with parameter $1/L$. In particular, let $g(x) \define f(x) - \frac{\mu}{2} x^{\top} x$, where $f \in \mathcal{F}(\mu, L)$. Then, it follows that $g$ is convex and $(L-\mu)$-Lipschitz continuous, and hence,  the co-coercivity property (\Cref{eq:co-coercivity}) implies that 

\begin{equation*}
    (\nabla f(x) - \nabla f(x'))^{\top} (x - x') - \mu || x - x'||^2 \geq \frac{1}{L - \mu} || (\nabla f(x) - \nabla f(x') - \mu ( x - x') ||^2.
\end{equation*}
Finally, rearranging this equation gives \eqref{eq:QSB}, concluding the proof.
\end{proof}

Let us denote with $\mathcal{S}(\mu, L)$ the set of continuously differentiable functions satisfying \eqref{eq:QSB}. \Cref{lemma:QSB} implies that this class of functions includes as a subset the class $\mathcal{F}(\mu, L)$, but it is important to stress that this inclusion is strict \cite{hu2017control}. For example, $\mathcal{S}(\mu, L)$ includes smooth and \emph{restricted strongly convex} (RSC) functions \cite{karimi2020linear}.

\subsection{Feedback Interconnection Loop}

The \emph{feedback interconnection} loop with external inputs $r \in \ell_{2e}^{p}$ and $e \in \ell_{2e}^q$ is depicted in \Cref{fig:feedback_interconnection}; it can be described with the following equations: 

\[ \begin{cases} 
      v = G_1 u + e; \\
      u = G_2 v + r,
   \end{cases}
\]
where $G_1: \ell_{2e}^p \to \ell_{2e}^{q}$ and $G_2: \ell_{2e}^{q} \to \ell_{2e}^{p}$. The feedback interconnection of \Cref{fig:feedback_interconnection} is said to be \emph{well-posed} if for every pair of inputs $(r, e)$ there exist unique outputs $u, v \in \ell_{2e}$. Moreover, the interconnection is said to be finite-gain stable if there exists a constant $\gamma \geq 0$ such that

\begin{equation}
    ||u|| + ||v|| \leq \gamma (||r|| + ||e|| ),
\end{equation}
for any square-summable inputs $r$ and $e$. Most of our results are based on the following central theorem: 

\begin{figure}[!ht]
    \centering
    \includegraphics[scale=0.6]{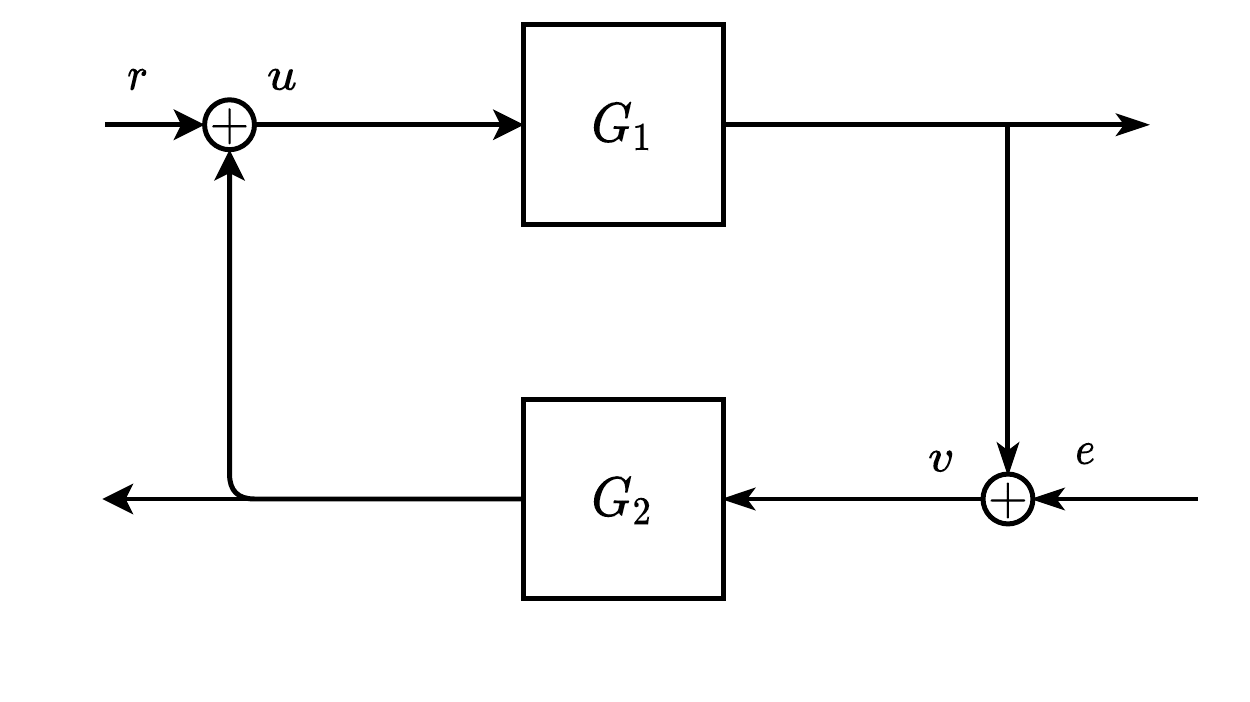}
    \caption{A feedback interconnection loop with external inputs $(r, e)$.}
    \label{fig:feedback_interconnection}
\end{figure}

\begin{theorem}[Small Gain Theorem, \cite{Zames1966OnTI}; see also \cite{88c5607c1675458893365b28433cb735,Mareels,536496}]
    \label{theorem-sgt}
Let $G_1$ and $G_2$ be finite-gain stable operators such that the feedback interconnection $[G_1, G_2]$ is well-posed. If $||G_1|| ||G_2|| < 1$, then the feedback interconnection $[G_1, G_2]$ is finite-gain stable.
\end{theorem}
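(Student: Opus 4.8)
The plan is to combine the triangle inequality with the defining gain bounds of $G_1$ and $G_2$ to set up a coupled pair of norm inequalities, and then solve this $2 \times 2$ system using the hypothesis $\|G_1\|\,\|G_2\| < 1$. Writing $\gamma_1 = \|G_1\|$ and $\gamma_2 = \|G_2\|$, the defining relations $v = G_1 u + e$ and $u = G_2 v + r$, together with $\|G_1 u\| \le \gamma_1 \|u\|$ and $\|G_2 v\| \le \gamma_2 \|v\|$, yield—at least formally—
\begin{align*}
\|u\| &\le \gamma_2 \|v\| + \|r\|, \\
\|v\| &\le \gamma_1 \|u\| + \|e\|.
\end{align*}
Substituting the second into the first gives $(1 - \gamma_1\gamma_2)\|u\| \le \gamma_2\|e\| + \|r\|$, and symmetrically $(1 - \gamma_1\gamma_2)\|v\| \le \gamma_1\|r\| + \|e\|$. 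Since $\gamma_1\gamma_2 < 1$, dividing through and adding produces the desired bound $\|u\| + \|v\| \le \gamma(\|r\| + \|e\|)$ with $\gamma = (1 + \max\{\gamma_1,\gamma_2\})/(1 - \gamma_1\gamma_2)$.

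The catch—and the step I expect to be the main obstacle—is that well-posedness only guarantees that the internal signals $u, v$ lie in the extended space $\ell_{2e}$, so the quantities $\|u\|, \|v\|$ appearing above need not be finite a priori, whereas the gain bounds $\|G_i w\| \le \gamma_i \|w\|$ are assumed only for genuinely square-summable $w \in \ell_2$. To make the argument rigorous I would run it on finite time horizons. For each $\tau \ge 0$ let $P_\tau$ denote the truncation that zeroes out all coordinates with index exceeding $\tau$; note that $P_\tau$ is a norm-nonincreasing projection and that $P_\tau w \in \ell_2$ for every $w \in \ell_{2e}$. Exploiting causality of $G_1, G_2$ (which holds for the forward-evolving dynamical systems considered here, so that $P_\tau G_i = P_\tau G_i P_\tau$), I would estimate $\|P_\tau G_2 v\| = \|P_\tau G_2 P_\tau v\| \le \gamma_2 \|P_\tau v\|$ and likewise for $G_1$, obtaining the truncated inequalities
\begin{align*}
\|P_\tau u\| &\le \gamma_2 \|P_\tau v\| + \|r\|, \\
\|P_\tau v\| &\le \gamma_1 \|P_\tau u\| + \|e\|,
\end{align*}
where on the right I used $\|P_\tau r\| \le \|r\|$ and $\|P_\tau e\| \le \|e\|$.

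Solving this truncated system exactly as in the formal computation yields $(1-\gamma_1\gamma_2)\|P_\tau u\| \le \gamma_2\|e\| + \|r\|$, and the crucial point is that the right-hand side is independent of $\tau$. Hence $\sup_\tau \|P_\tau u\| < \infty$, which is precisely the assertion that $u \in \ell_2$ with a norm bound matching the formal estimate; the same argument applies to $v$. Finally I would let $\tau \to \infty$, invoking monotone convergence $\|P_\tau u\|^2 \uparrow \|u\|^2$ to transfer the truncated estimates to the full signals, thereby obtaining $\|u\| + \|v\| \le \gamma(\|r\| + \|e\|)$ and establishing finite-gain stability of $[G_1, G_2]$. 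The only genuinely nontrivial ingredient is this bootstrap from $\ell_{2e}$ to $\ell_2$ via truncation and causality; once the internal signals are known to be square-summable, the algebra of solving the $2\times 2$ system is entirely routine.
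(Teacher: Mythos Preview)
The paper does not prove this theorem; it is quoted as a classical result with citations to Zames and standard textbooks, and is used as a black box throughout. So there is no ``paper's own proof'' to compare against.

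That said, your argument is the standard textbook proof and is essentially correct. One point worth flagging: the theorem as stated in the paper does not explicitly list causality of $G_1, G_2$ among its hypotheses, yet your truncation step relies on $P_\tau G_i = P_\tau G_i P_\tau$. You acknowledge this parenthetically (``which holds for the forward-evolving dynamical systems considered here''), and in the context of this paper that is entirely justified---every operator that appears is causal---but if you were writing this up as a standalone proof you would want to either add causality as an explicit hypothesis or note that the result as stated (finite-gain bounds only on $\ell_2$, no causality) is not quite enough to run the bootstrap. The references the paper cites (e.g.\ Khalil, Vidyasagar) all build causality into the setup from the outset.
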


\subsection{Linear Shift Transformation}
\label{appendix:lst}

However, it is often the case that the systems comprising the feedback loop fail to satisfy the hypotheses of the small gain theorem (\Cref{theorem-sgt}). As it happens, in many such cases the stability of the closed loop can be established based on a modified form of the feedback interconnection system which shares the same stability properties. Specifically, for our purposes we will employ the standard \emph{linear shift transformation}, illustrated in \Cref{fig:transformation}. \Cref{lemma:linear_shift} implies the equivalence between the two formulations.

\begin{figure}[!ht]
    \centering
    \includegraphics[scale=0.6]{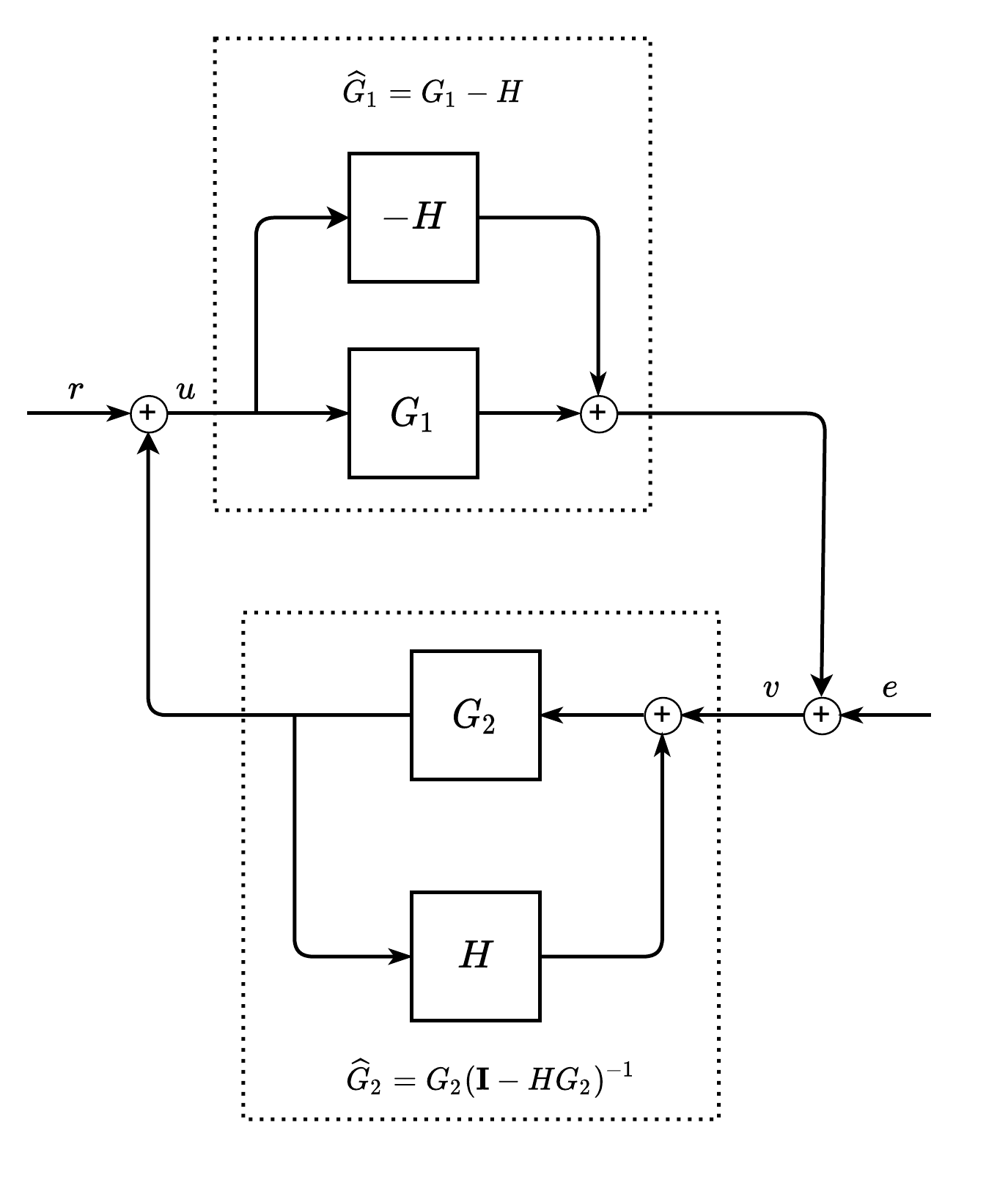}
    \caption{The linear shift transformation.}
    \label{fig:transformation}
\end{figure}

\begin{lemma}[\cite{10.5555/191301}, Lemma 3.5.3]
    \label{lemma:linear_shift}
Consider the feedback loops shown in \Cref{fig:feedback_interconnection} and \Cref{fig:transformation}, with $G_1, G_2 : \ell_{2e} \to \ell_{2e}$ and $H : \ell_{2e} \to \ell_{2e}$ being a linear system such that the operator $G_2(\mathbf{I} - H G_2)^{-1} : \ell_{2e} \to \ell_{2e}$ is well-defined. If $H$ and $G_2(\mathbf{I} - H G_2)^{-1}$ are finite-gain stable, then the loop in \Cref{fig:feedback_interconnection} is finite-gain stable if and only if the loop in \Cref{fig:transformation} is finite-gain stable.
\end{lemma}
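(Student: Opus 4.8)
The plan is to exhibit an explicit, bounded and boundedly-invertible change of variables that carries every input/output trajectory of the loop in \Cref{fig:feedback_interconnection} to one of the loop in \Cref{fig:transformation}, and conversely, so that finite-gain stability of one loop becomes equivalent to that of the other. Writing the transformed operators as $\tilde G_1 = G_1 - H$ and $\tilde G_2 = G_2(\mathbf I - H G_2)^{-1}$, I would first record the two systems: the original $v = G_1 u + e$, $u = G_2 v + r$, and the transformed $\tilde v = \tilde G_1 \tilde u + \tilde e$, $\tilde u = \tilde G_2 \tilde v + \tilde r$. The candidate correspondence on internal signals is $\tilde u = u$ and $\tilde v = v - H u$, and on external inputs $\tilde e = e$ and $\tilde r = (\mathbf I - G_2 H)^{-1} r$.

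First I would verify that this correspondence is a genuine solution map. Substituting $\tilde v = v - Hu$ into $v = G_1 u + e$ gives $\tilde v = (G_1 - H)u + e = \tilde G_1 \tilde u + \tilde e$, which is the first transformed equation. For the second, substituting $v = \tilde v + H\tilde u$ into $u = G_2 v + r$ yields $(\mathbf I - G_2 H)\tilde u = G_2 \tilde v + r$, and the push-through identity $(\mathbf I - G_2 H)^{-1} G_2 = G_2(\mathbf I - HG_2)^{-1} = \tilde G_2$ then gives $\tilde u = \tilde G_2 \tilde v + (\mathbf I - G_2 H)^{-1} r = \tilde G_2 \tilde v + \tilde r$. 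The well-posedness hypothesis is exactly what guarantees that these operator inverses are well-defined on $\ell_{2e}$, so the manipulations are legitimate. Running the same computation with the shift $-H$ shows the transformation is invertible: applied to the transformed loop it returns $\tilde G_1 + H = G_1$ and $\tilde G_2(\mathbf I + H\tilde G_2)^{-1} = G_2$, again by push-through.

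Next I would establish boundedness of both correspondences on $\ell_2$. The signal map $(u,v)\mapsto(u,\,v-Hu)$ and its inverse $(\tilde u,\tilde v)\mapsto(\tilde u,\,\tilde v+H\tilde u)$ are finite-gain stable because $H$ is. For the input map I would rewrite $\tilde r = (\mathbf I - G_2 H)^{-1} r = (\mathbf I + \tilde G_2 H) r$ (the identity $(\mathbf I - G_2 H)^{-1} = \mathbf I + \tilde G_2 H$ following once more from push-through), which is finite-gain stable since both $\tilde G_2$ and $H$ are, with inverse realized by $\mathbf I - G_2 H = (\mathbf I + \tilde G_2 H)^{-1}$. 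Thus the closed-loop map $(r,e)\mapsto(u,v)$ of the original loop and the map $(\tilde r,\tilde e)\mapsto(\tilde u,\tilde v)$ of the transformed loop differ only by pre- and post-composition with these bounded, boundedly-invertible linear maps; hence one is bounded on $\ell_2$ if and only if the other is, which is the asserted equivalence.

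The main obstacle will be the careful treatment of the operator inverses, and in particular of the input transformation, on $\ell_2$ rather than merely on $\ell_{2e}$: the expressions $(\mathbf I - G_2 H)^{\pm 1}$ involve the bare operator $G_2$, which is \emph{not} assumed finite-gain stable, so they cannot be bounded directly. The crux is to express every operator whose boundedness we need purely in terms of the hypothesised finite-gain-stable operators $H$ and $\tilde G_2 = G_2(\mathbf I - HG_2)^{-1}$ — precisely the role of the push-through identity, which converts $(\mathbf I - G_2 H)^{-1}$ into $\mathbf I + \tilde G_2 H$. Making the norm estimates fully rigorous then requires the standard extended-space/truncation formalism (working with finite truncations $P_N$ and exploiting causality of all operators before letting $N\to\infty$), which is where well-posedness and the finite-gain stability of $H$ and $\tilde G_2$ are used in tandem.
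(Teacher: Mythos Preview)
The paper does not supply its own proof of this lemma; it is quoted verbatim from Vidyasagar's textbook. So there is nothing to compare against, and the question is simply whether your argument is complete.

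Your correspondence and algebra are correct, and the ``if'' direction (transformed stable $\Rightarrow$ original stable) goes through exactly as you describe: with $\Phi$ and $\tilde\Phi$ the two closed-loop maps, $S:(u,v)\mapsto(u,v-Hu)$ the signal map, and $T:(r,e)\mapsto((\mathbf I+\tilde G_2 H)r,e)$ the input map, the identity $\tilde\Phi\circ T=S\circ\Phi$ together with boundedness of $S^{-1}$, $T$, and $\tilde\Phi$ gives boundedness of $\Phi$.

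The gap is in the ``only if'' direction. You assert that $T$ is ``boundedly-invertible'', but $T^{-1}(\tilde r,\tilde e)=((\mathbf I-G_2 H)\tilde r,\tilde e)$ involves the bare $G_2$, which is \emph{not} assumed finite-gain stable. Push-through rewrites $(\mathbf I-G_2 H)^{-1}$ as $\mathbf I+\tilde G_2 H$; it does \emph{not} rewrite $\mathbf I-G_2 H$ itself in terms of $H$ and $\tilde G_2$ alone---any attempt leads to $G_2=\tilde G_2(\mathbf I+H\tilde G_2)^{-1}$, and the inner inverse is again not controlled by the hypotheses. So from $\tilde\Phi=S\circ\Phi\circ T^{-1}$ you cannot conclude boundedness of $\tilde\Phi$ from that of $\Phi$. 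Your final paragraph correctly flags the danger but then claims push-through dissolves it; it only dissolves half of it.

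The standard fix is to use a \emph{different} embedding for this direction, moving the shift to the other external input. Set $w=(\mathbf I-HG_2)^{-1}\tilde v\in\ell_{2e}$; then $\tilde u=\tilde G_2\tilde v+\tilde r=G_2 w+\tilde r$, and substituting into $(\mathbf I-HG_2)w=\tilde v=(G_1-H)\tilde u+\tilde e$ gives $w=G_1\tilde u+(\tilde e-H\tilde r)$. Thus $(\tilde u,w)$ solves the \emph{original} loop with inputs $(\tilde r,\tilde e-H\tilde r)$, both of which lie in $\ell_2$ with norm controlled by $\|\tilde r\|+\|\tilde e\|$ since $H$ is bounded. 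Finite-gain stability of the original loop now bounds $\tilde u$ and $w$, and $\tilde v=(\mathbf I-HG_2)w=w-H(\tilde u-\tilde r)$ recovers a bound on $\tilde v$ using only $\|H\|$.
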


\subsection{Exponential Stability}

In the sequel we will consider the feedback interconnection $[P, K]$ of \Cref{fig:1}, such that 

\begin{itemize}
    \item $P : \ell_{2e} \to \ell_{2e}$ represents a finite-gain stable \emph{static nonlinearity}, associated with the operator $F$;
    \item $K$ is a \emph{linear time-invariant} (LTI) system with transfer function $\mathbf{K}(z)$, associated with the optimization algorithm.
\end{itemize}

This formulation constitutes arguably the most well-studied class of feedback interconnection loops in nonlinear control theory, and deriving sufficient conditions for the absolute stability of such systems is referred to as \emph{Lur'e problem}. Indeed, we will explain how this particular formulation suffices in order to characterize the behavior of many well-studied optimization algorithms. In this context, we will say that the induced interconnection $[P, K]$ is \emph{exponentially stable} if there exists some $\rho \in (0,1)$ such that if $r = 0$ and $e = 0$, the \emph{state} $\xi_k$ of $K$ will decay exponentially with rate $\rho$; that is, 

\begin{equation}
    || \xi_k || \leq C \rho^{k} ||\xi_0||, 
\end{equation}
under any initial state $\xi_0$ and a constant $C \geq 0$ independent on $k$. We will use the following standard fact regarding the well-posedness of the induced feedback interconnection loop:

\begin{fact}
    \label{fact:well_posed}
If the transfer matrix $\mathbf{K}(z)$ is strictly proper, then the feedback interconnection $[P, K]$ is well-posed, i.e. the feedback interconnection has a well-defined state model.
\end{fact}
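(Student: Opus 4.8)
The plan is to reduce well-posedness to the absence of an \emph{algebraic loop} in the interconnection, and to show that strict properness of $\mathbf{K}(z)$ precisely eliminates such a loop. First I would write the controller $K$ in its state-space (normal) form
\begin{align*}
\xi_{k+1} &= \mathbf{A}\xi_k + \mathbf{B}v_k, \\
u_k &= \mathbf{C}\xi_k + \mathbf{D}v_k,
\end{align*}
so that its transfer matrix reads $\mathbf{K}(z) = \mathbf{C}(z\mathbf{I} - \mathbf{A})^{-1}\mathbf{B} + \mathbf{D}$. Since $(z\mathbf{I}-\mathbf{A})^{-1} \to \mathbf{0}$ as $z \to \infty$, the feedthrough term satisfies $\mathbf{K}(\infty) = \mathbf{D}$; hence strict properness, $\mathbf{K}(\infty) = \mathbf{0}$, is exactly equivalent to $\mathbf{D} = \mathbf{0}$. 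This is the single structural fact the whole argument will hinge on.

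Next I would combine the controller equations with the nonlinearity $v_k = F(u_k + x^*) + e_k$ and the external input $r_k$ entering the controller output, producing the instantaneous constraint
\begin{equation*}
v_k = F(\mathbf{C}\xi_k + \mathbf{D}v_k + r_k + x^*) + e_k.
\end{equation*}
With $\mathbf{D} = \mathbf{0}$ this collapses to $v_k = F(\mathbf{C}\xi_k + r_k + x^*) + e_k$, whose right-hand side depends only on quantities already fixed at step $k$, namely the current state $\xi_k$ and the external inputs; the implicit self-dependence of $v_k$ has been severed, and the loop is broken.

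With the algebraic loop removed, I would close the argument by induction on $k$. Starting from the initial state $\xi_0$, the displayed constraint determines $v_0$ uniquely, then $u_0 = \mathbf{C}\xi_0 + r_0$ follows, and the update $\xi_1 = \mathbf{A}\xi_0 + \mathbf{B}v_0$ produces the next state; iterating yields unique sequences $(u,v) \in \ell_{2e}$ for every pair of inputs $(r,e)$, which is precisely a well-defined state model and hence well-posedness. The only genuine subtlety---and the reason the statement restricts to strictly proper $\mathbf{K}(z)$---is the solvability of the instantaneous constraint: when $\mathbf{D} \neq \mathbf{0}$ (as for the Proximal Point controller) it becomes a true fixed-point equation in $v_k$ that need not admit a unique solution without extra structure such as monotonicity of $F$. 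Thus the crux is simply to verify that $\mathbf{D} = \mathbf{0}$ turns this equation from implicit into explicit, after which the inductive construction is routine.
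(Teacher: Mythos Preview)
Your argument is correct and is exactly the standard justification for this fact. Note, however, that the paper does not actually prove \Cref{fact:well_posed}: it is stated as a ``standard fact'' and the only content the paper adds is the remark immediately following it, namely that strict properness is equivalent to $\mathbf{D} = \mathbf{0}$ in any state-space realization. Your proposal supplies precisely the details the paper leaves implicit---the $\mathbf{K}(\infty) = \mathbf{D}$ identification, the observation that $\mathbf{D} = \mathbf{0}$ severs the algebraic loop, and the inductive construction of the unique trajectory---so there is nothing to compare beyond saying that you have written out what the paper takes for granted. Your closing remark about the Proximal Point controller (where $\mathbf{D} \neq \mathbf{0}$ and one must appeal to convexity/monotonicity for solvability) is also consistent with the paper's treatment in the Proximal Point subsection.
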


Recall that a rational transfer function is called \emph{strictly proper} if the degree of the numerator is (strictly) less than the degree of the denominator. That is, $\mathbf{K}(z)$ is strictly proper if $\mathbf{K}(\infty) = \mathbf{0}$. Equivalently, if $(\mathbf{A}, \mathbf{B}, \mathbf{C}, \mathbf{D})$ represent the \emph{state matrices} of $K$ (see \Cref{subsection:dynamical_systems}), the feedback interconnection $[P, K]$ is well-posed if $\mathbf{D} = \mathbf{0}$.

We will use a very elegant result due to Boczar, Lessard, and Recht \cite{DBLP:conf/cdc/BoczarLR15} which essentially reduces the exponential stability of a feedback interconnection to the finite-gain stability of a ``gap-introducing'' transformed system. More precisely, let us define the operators $\rho_{+}$ and $\rho_{-}$ as the time-domain multipliers $\rho^{k}$ and $\rho^{-k}$ respectively, for some fixed parameter $\rho \in (0,1)$; these operators derive from the theory of \emph{stability multipliers} \cite{877001}. The following result provides a sufficient condition for certifying exponential stability:

\begin{lemma}[\cite{DBLP:conf/cdc/BoczarLR15}, Proposition 5]
    \label{lemma:exponential_reduction}
If the interconnection system of \Cref{fig:2} is finite-gain stable, then the interconnection system of \Cref{fig:1} is exponentially stable.
\end{lemma}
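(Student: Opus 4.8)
The plan is to exploit the elementary observation that scaling a signal by $\rho^{-k}$ converts exponential decay at rate $\rho$ into plain square-summability: if $(\rho^{-k}\xi_k)_k \in \ell_2$, then each term $\rho^{-2k}\|\xi_k\|^2$ is dominated by the squared $\ell_2$-norm of the whole sequence, whence $\|\xi_k\| \le C\rho^k\|\xi_0\|$ for a suitable constant $C$, which is precisely the definition of exponential stability. Thus the entire task reduces to showing that applying the multiplier $\rho_-$ (which multiplies the $k$-th sample by $\rho^{-k}$) to all internal signals of \Cref{fig:1} produces square-summable signals whenever the transformed interconnection of \Cref{fig:2} is finite-gain stable.

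To make this precise, I would first write the LTI controller $K$ in its state-space form $\xi_{k+1} = \mathbf{A}\xi_k + \mathbf{B}v_k$, $u_k = \mathbf{C}\xi_k$ (recall $\mathbf{D} = \mathbf{0}$ by \Cref{fact:well_posed}), and introduce the scaled state and signals $\hat{\xi}_k = \rho^{-k}\xi_k$, $\hat{u}_k = \rho^{-k}u_k$, $\hat{v}_k = \rho^{-k}v_k$. A direct substitution gives $\hat{\xi}_{k+1} = \rho^{-1}\mathbf{A}\hat{\xi}_k + \rho^{-1}\mathbf{B}\hat{v}_k$ and $\hat{u}_k = \mathbf{C}\hat{\xi}_k$; in the frequency domain this is exactly the realization of the rescaled transfer matrix $\mathbf{K}(\rho z)$, the very object that appears in \Cref{theorem:main_theorem}. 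On the nonlinearity side, the scaled pair satisfies $\hat{v} = \rho_- P \rho_+ \hat{u}$, and since the quadratic sector bound \eqref{eq:qsb-general} is a pointwise and homogeneous constraint on $(u_k, v_k)$, multiplying both entries by the common factor $\rho^{-k}$ leaves it invariant; hence $\rho_- P \rho_+$ lies in the same sector as $P$. Together these two facts identify the scaled loop with the transformed interconnection of \Cref{fig:2}.

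It then remains to transfer the stability conclusion quantitatively. The nonzero initial state $\xi_0$ of \Cref{fig:1} is handled by the standard device of encoding it as a finite-energy external input: the recursion with initial condition $\xi_0$ coincides with the zero-initial-state recursion driven by an impulsive input whose $\ell_2$-norm is proportional to $\|\xi_0\|$. Feeding this impulse into \Cref{fig:2} and invoking its finite-gain stability yields $\|\hat{u}\| + \|\hat{v}\| = O(\|\xi_0\|)$, so $\hat{u}, \hat{v} \in \ell_2$. Because the scaled controller $(\rho^{-1}\mathbf{A}, \rho^{-1}\mathbf{B}, \mathbf{C})$ is a stable realization—its poles are those of $\mathbf{K}(\rho z)$, whose stability is forced by the finite-gain stability of the transformed loop—the state $\hat{\xi}$ is obtained from $\hat{v}$ through a Schur-stable filter and therefore also lies in $\ell_2$ with norm $O(\|\xi_0\|)$. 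Unscaling via $\|\xi_k\| = \rho^k\|\hat{\xi}_k\| \le \rho^k\|\hat{\xi}\|_{\ell_2}$ delivers the desired bound $\|\xi_k\| \le C\rho^k\|\xi_0\|$.

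The step I expect to be the main obstacle is the bookkeeping that links the two loops: namely, verifying (i) that the gap-introducing transformation genuinely maps the Lur'e interconnection of \Cref{fig:1} onto that of \Cref{fig:2}, and in particular that the sector bound survives the conjugation by $\rho_\pm$, and (ii) that a nonzero initial condition can be represented faithfully as a square-summable external input of the transformed system without inflating its energy beyond $O(\|\xi_0\|)$. Once these structural identifications are established, the quantitative conclusion follows mechanically from the definition of finite-gain stability together with the stability of the scaled realization. This is, in essence, the content of Proposition~5 of \cite{DBLP:conf/cdc/BoczarLR15}.
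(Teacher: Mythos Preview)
The paper does not actually prove this lemma: it is stated with a citation to \cite{DBLP:conf/cdc/BoczarLR15} (Proposition~5) and used as a black box in the proof of \Cref{theorem:main_theorem}. So there is no in-paper argument to compare against.

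That said, your sketch is the standard route to this result and is sound. The two structural identifications you flag as the ``main obstacle'' are exactly what the paper isolates elsewhere: point~(i), the invariance of the sector bound under conjugation by $\rho_\pm$, is precisely \Cref{claim:transformed-sector_bound}, and the fact that the scaled controller has transfer matrix $\mathbf{K}(\rho z)$ is \Cref{claim:K_r}. Your treatment of the nonzero initial state via an impulsive exogenous input of energy $O(\|\xi_0\|)$, followed by the observation that a Schur-stable realization maps $\ell_2$ inputs to $\ell_2$ states, is the standard way to close the argument and is what underlies the cited proposition. One small caveat: in your final line you invoke stability of the scaled realization as ``forced by the finite-gain stability of the transformed loop''; strictly speaking what you need is that $\mathbf{K}(\rho z)$ is stable (equivalently, $\rho^{-1}\mathbf{A}$ is Schur), which in the paper's applications is verified directly rather than deduced from closed-loop stability. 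This is a minor point of presentation rather than a gap.
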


\begin{figure}[!ht]
\centering
\begin{minipage}{.5\textwidth}
  \centering
  \includegraphics[scale=0.6]{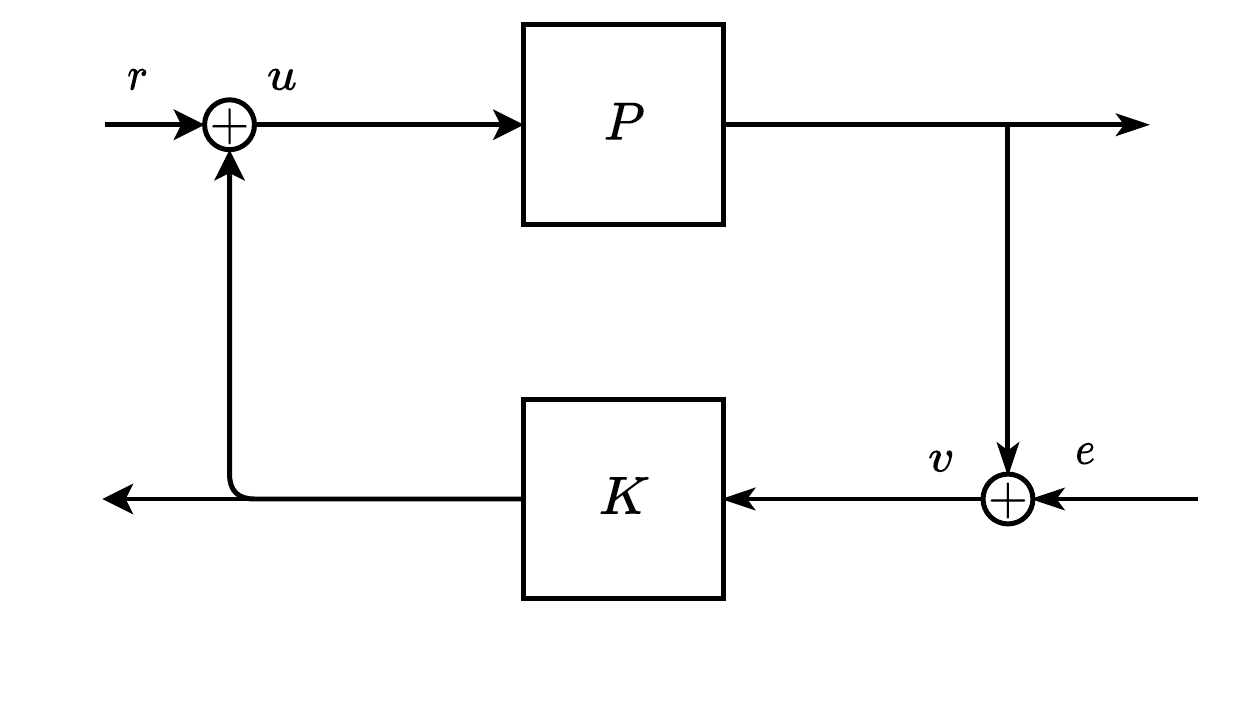}
  \captionof{figure}{The $[P, K]$ interconnection.}
  \label{fig:1}
\end{minipage}%
\begin{minipage}{.5\textwidth}
  \centering
  \includegraphics[scale=0.6]{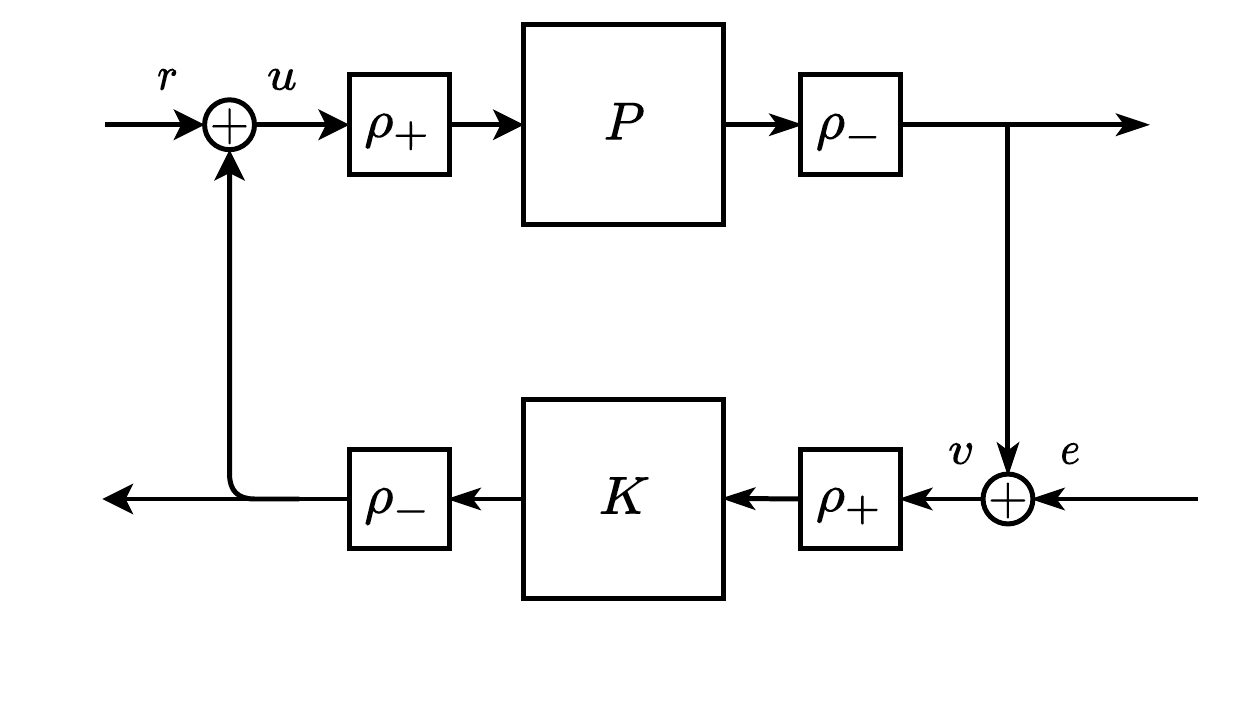}
  \captionof{figure}{The transformed interconnection.}
  \label{fig:2}
\end{minipage}
\end{figure}

We remark that a slightly different definition of stability is used in \cite{DBLP:conf/cdc/BoczarLR15}, but this does not alter the result. Therefore, one way to establish exponential stability is to apply the small gain theorem, but for the transformed system of \Cref{fig:2}. To this end, we will use the following basic observations:

\begin{claim}[\cite{DBLP:conf/cdc/BoczarLR15}, Remark 4]
    \label{claim:K_r}
If $\mathbf{K}(z)$ is the transfer matrix of $K$, then the system $\rho_+ \circ K \circ \rho_-$ has transfer matrix $\mathbf{K}(\rho z)$.
\end{claim}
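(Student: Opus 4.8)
The plan is to prove this by a direct computation in the $Z$-domain, since the statement concerns only how exponential time-domain weightings act on a transfer matrix. The single tool required is the scaling (modulation) property of the bilateral $Z$-transform: starting from the definition $U(z) = \sum_k u_k z^{-k}$, if a signal is reweighted as $\tilde{u}_k = c^k u_k$, then its transform becomes $\tilde{U}(z) = \sum_k c^k u_k z^{-k} = U(z/c)$. Specializing to $c = \rho$ and $c = \rho^{-1}$ gives the two facts I will use: the operator $\rho_+$ (weighting by $\rho^{k}$) substitutes $z \mapsto z/\rho$ in the transform, while $\rho_-$ (weighting by $\rho^{-k}$) substitutes $z \mapsto \rho z$.

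Concretely, I would track an arbitrary input $u \in \ell_{2e}$ through the three blocks of the cascade, taking $Z$-transforms at each stage: the LTI relation $Y(z) = \mathbf{K}(z) W(z)$ governs the middle block (where $W$ and $Y$ are the transforms entering and leaving $K$), and the scaling property governs the two surrounding multipliers. These multipliers rescale the transform on either side of the LTI relation; composing their substitutions, the rescaling applied on one side is undone on the input signal, while $\mathbf{K}$ is left evaluated at the net-rescaled argument $\rho z$. This yields $V(z) = \mathbf{K}(\rho z)\, U(z)$ for every input, which, by the definition of a transfer matrix (the relation $V(z) = \mathbf{K}'(z) U(z)$), is exactly the assertion that $\rho_+ \circ K \circ \rho_-$ has transfer matrix $\mathbf{K}(\rho z)$. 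I would also record that, since $\rho \in (0,1)$ and the signals live in the extended space $\ell_{2e}$, each reweighting maps $\ell_{2e}$ into itself, so the cascade is well-defined and its input--output map remains LTI; hence it is legitimate to speak of its transfer matrix at all.

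There is no deep obstacle: the computation is essentially one line once the scaling property is in hand. The only point demanding genuine care is the bookkeeping of the scaling \emph{direction}---one must check that $\rho_-$ contributes $z \mapsto \rho z$ and $\rho_+$ contributes $z \mapsto z/\rho$ (and not the reverse), and that they are composed in the order that makes the two substitutions combine to the net map $z \mapsto \rho z$ acting on $\mathbf{K}$, so that the result is $\mathbf{K}(\rho z)$ rather than $\mathbf{K}(z/\rho)$. Fixing this convention correctly is precisely what ties the reduction to the intended notion of $\rho$-exponential decay, and it is what makes the downstream stability analyses coherent---for instance the $K'(\rho z)$ computations for $\gd$ and $\ogd$, where stability is read off from requiring $\rho$ to exceed the magnitudes of the poles of $K'$.
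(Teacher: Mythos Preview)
Your proposal is correct and is essentially the same argument the paper gives: the paper's proof is the single line ``apply the $Z$-transform and use $\mathcal{Z}\{u_k \rho^k\}(z) = \mathcal{Z}\{u_k\}(z/\rho)$,'' and you have simply unpacked this, tracking the substitution $z \mapsto z/\rho$ from $\rho_+$ and $z \mapsto \rho z$ from $\rho_-$ through the cascade (in the paper's left-to-right composition convention $(f \circ g)(u) = g(f(u))$) to arrive at $V(z) = \mathbf{K}(\rho z) U(z)$. Your additional remark that the composite is genuinely LTI---so that speaking of its transfer matrix is legitimate---is a nice point the paper leaves implicit.
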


This claim follows by simply applying the z-transform, and using the property $\mathcal{Z} \{ u_k \rho^k \} (z) = \mathcal{Z} \{u_k \} (z/\rho)$. We should note that the notation $\circ$ stands for the usual composition of systems, such that $(f \circ g) (u) \equiv g(f(u))$, for a signal $u$.

\begin{claim}
    \label{claim:transformed-sector_bound}
If the operator $P$ lies in the sector $\mathcal{S}(\mathbf{Q})$, for some symmetric matrix $\mathbf{Q}$, then the operator $P_{\rho} = \rho_+ \circ P \circ \rho_-$ also lies in $\mathcal{S}(\mathbf{Q})$.
\end{claim}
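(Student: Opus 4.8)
The plan is to unwind the composition $P_\rho = \rho_+ \circ P \circ \rho_-$ at the level of individual trajectories and then exploit the fact that a quadratic sector bound is a homogeneous quadratic condition. Fix an arbitrary input/output pair $(u, v)$ of $P_\rho$. Using the composition convention $(f \circ g)(u) \equiv g(f(u))$, evaluating $P_\rho$ on $u$ amounts to first passing $u$ through $\rho_+$, then through $P$, and finally through $\rho_-$. Writing $\bar u$ for the output of $\rho_+$ and $\bar v = P(\bar u)$ for the corresponding output of $P$, this gives the componentwise relations $\bar u_k = \rho^k u_k$, $\bar v = P(\bar u)$, and $v_k = \rho^{-k} \bar v_k$ for every $k \geq 0$.

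The key observation is that $(\bar u, \bar v)$ is itself a legitimate input/output trajectory of $P$. Hence, since $P$ lies in $\mathcal{S}(\mathbf{Q})$, the quadratic sector bound \eqref{eq:qsb-general} applies to it: setting $\bar w_k^{\top} = [\bar u_k^{\top}, \bar v_k^{\top}]$, we have $\langle \bar w_k, \mathbf{Q} \bar w_k \rangle \geq 0$ for all $k \geq 0$. To finish, I would relate $\bar w_k$ to the pair $w_k^{\top} = [u_k^{\top}, v_k^{\top}]$ of $P_\rho$: from $\bar u_k = \rho^k u_k$ and $\bar v_k = \rho^k v_k$ (the latter being a restatement of $v_k = \rho^{-k} \bar v_k$) we obtain $\bar w_k = \rho^k w_k$. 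Substituting this into the bound and pulling the scalar out of the quadratic form yields $\langle \bar w_k, \mathbf{Q} \bar w_k \rangle = \rho^{2k} \langle w_k, \mathbf{Q} w_k \rangle \geq 0$. Since $\rho^{2k} > 0$, this forces $\langle w_k, \mathbf{Q} w_k \rangle \geq 0$ for every $k$, i.e. $(u, v)$ satisfies the sector bound; as $(u,v)$ was arbitrary, $P_\rho \in \mathcal{S}(\mathbf{Q})$.

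There is essentially no analytic obstacle here -- the whole content is bookkeeping -- so the one point deserving care is tracking the order of composition, so that the factor $\rho^k$ introduced by $\rho_+$ and the factor $\rho^{-k}$ introduced by $\rho_-$ act on the input and output of $P$ in a matched way: both the transformed input $\bar u_k$ and transformed output $\bar v_k$ at time $k$ carry the same power $\rho^k$. This matching hinges on $P$ being \emph{static}, so that its output time index coincides with its input time index and contributes no shift between $\bar u$ and $\bar v$; the homogeneity of the quadratic form then does the remaining work. This is the exact time-domain analogue of \Cref{claim:K_r}, where the same pair of multipliers produces the frequency-domain substitution $z \mapsto \rho z$.
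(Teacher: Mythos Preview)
Your proof is correct and follows the same argument as the paper: identify the intermediate pair $(\bar u,\bar v)=(u',v')$ as a trajectory of $P$, use the sector bound there, and then note that $\bar w_k=\rho^k w_k$ so the scalar $\rho^{2k}>0$ factors out of the quadratic form. The paper compresses the last step into ``substituting \ldots\ leads to the desired conclusion,'' while you spell it out explicitly and add the (correct) remark that the matching of powers relies on $P$ being static.
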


\begin{proof}
Let us consider the notation of \Cref{fig:transf_P}. By assumption, we can infer that for all $k \geq 0$,

\begin{equation*}
    \label{eq:P-sector}
    \begin{bmatrix}
    (u_k')^{\top} & (v_k')^{\top}
    \end{bmatrix}
    \mathbf{Q}
    \begin{bmatrix}
    u_k' \\
    v_k'
    \end{bmatrix} \geq 0.
\end{equation*}
But, we also know that $u_k' = \rho^k u_k$, and $v_k = \rho^{-k} v_k' \iff v_k' = \rho^k v_k$. Thus, substituting in \eqref{eq:P-sector} leads to the desired conclusion.
\end{proof}

\begin{figure}[!ht]
    \centering
    \includegraphics[scale=0.7]{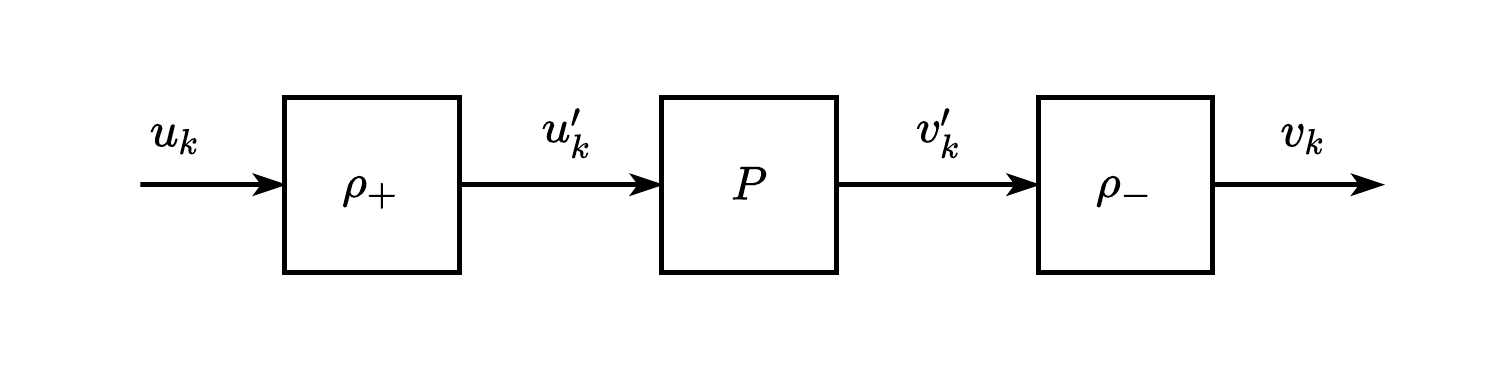}
    \caption{The operator $P_{\rho} = \rho_+ \circ P \circ \rho_-$.}
    \label{fig:transf_P}
\end{figure}

\subsection{Bounding the Gains}

In this subsection we explain how one can bound the gains of the operators involved in the feedback interconnection, which is essentially the crux in applying the small gain theorem (\Cref{theorem-sgt}). In particular, first let $P$ be defined with the input/output relation $v_k = F(u_k + x^*)$, where $F$ is an operator satisfying \Cref{assumption:sector} and \Cref{assumption:unique}. Then, we know that

\begin{equation}
    \label{eq:non_diagonal-SB}
    \begin{bmatrix}
    u_k \\
    v_k
    \end{bmatrix}^{\top}
    \begin{bmatrix}
    -2 \mu L \mathbf{I}_d & (L + \mu) \mathbf{I}_d \\
    (L + \mu) \mathbf{I}_d & -2\mathbf{I}_d
    \end{bmatrix}
    \begin{bmatrix}
    u_k \\
    v_k
    \end{bmatrix}
    \geq 0.
\end{equation}

That is, $P$ lies in the sector $[\mu, L]$. We will show how the linear shift transformation (\Cref{fig:transformation}) previously introduced can \emph{diagonalize} this QSB. First, we let $H$ be the operator which simply multiplies the input with a scalar value $h$; observe that $H$ is indeed linear and stable, as required for \Cref{lemma:linear_shift}. For $h = (L + \mu)/2$ we will show that the induced sector bound is diagonal, which immediately implies a bound for the gain.

\begin{lemma}
    \label{lemma:P-gain}
    The system $P' = P - (L + \mu)/2$ satisfies the following diagonal sector bound:
    
    \begin{equation}
        \label{eq:diagonal_SB}
    \begin{bmatrix}
    u_k \\
    v_k
    \end{bmatrix}^{\top}
    \begin{bmatrix}
    \frac{(L - \mu)^2}{2} \mathbf{I}_d & \mathbf{0}_d \\
     \mathbf{0}_d & -2\mathbf{I}_d
    \end{bmatrix}
    \begin{bmatrix}
    u_k \\
    v_k
    \end{bmatrix}
    \geq 0,
\end{equation}
where $(u, v)$ represents the input/output pair (respectively) of $P'$. In particular, this implies that $||P'|| \leq (L - \mu)/2$.
\end{lemma}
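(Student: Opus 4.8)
The plan is to derive the diagonal sector bound \eqref{eq:diagonal_SB} directly from the non-diagonal bound \eqref{eq:non_diagonal-SB} by a single substitution, exploiting the specific choice $h = (L+\mu)/2$ to annihilate the off-diagonal coupling. First I would fix notation to keep the two outputs distinct: write $w_k = F(u_k + x^*)$ for the output of the original nonlinearity $P$, so that the output of the shifted system $P' = P - h$ is $v_k = w_k - h u_k$, while the input $u_k$ is left unchanged. By \eqref{eq:non_diagonal-SB} the pair $(u_k, w_k)$ satisfies $-2\mu L \|u_k\|^2 + 2(L+\mu)\langle u_k, w_k \rangle - 2\|w_k\|^2 \geq 0$ for every $k$, and this is the only hypothesis the argument needs.

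Next I would substitute $w_k = v_k + h u_k$ into this inequality and expand the quadratic form. The cross term $\langle u_k, v_k \rangle$ then appears with coefficient $2(L+\mu) - 4h$, which vanishes precisely because $h = (L+\mu)/2$; this cancellation is the crux of the argument and is exactly what diagonalizes the form. The coefficient of $\|u_k\|^2$ collects to $-2\mu L + 2(L+\mu)h - 2h^2$, which upon inserting $h = (L+\mu)/2$ telescopes (a completion-of-squares identity) to $(L-\mu)^2/2$, while $\|v_k\|^2$ retains coefficient $-2$. Reassembling the terms reproduces \eqref{eq:diagonal_SB} verbatim.

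Finally, the gain estimate falls out of the diagonal bound: rearranging gives the pointwise inequality $\|v_k\|^2 \leq \tfrac{(L-\mu)^2}{4}\|u_k\|^2$, i.e. $\|v_k\| \leq \tfrac{L-\mu}{2}\|u_k\|$ for each $k$. Squaring and summing over $k$ (equivalently, applying the definition of the $\ell_2$-norm) yields $\|P' u\| \leq \tfrac{L-\mu}{2}\|u\|$ for every square-summable input $u$, so $\|P'\| \leq (L-\mu)/2$ as claimed.

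I do not anticipate a genuine obstacle, since the computation is a routine algebraic substitution together with an elementary completion of squares. The only points requiring care are bookkeeping ones: the symbol $v_k$ in the statement denotes the output of the \emph{transformed} system $P'$ rather than the raw operator value $F(u_k + x^*)$, so the substitution $w_k = v_k + h u_k$ must be applied with the correct sign; and one must note that the per-coordinate bound transfers to the $\ell_2$-gain only because the inequality holds for every $k$ simultaneously.
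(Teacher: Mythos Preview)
Your proposal is correct and follows essentially the same approach as the paper: the paper performs the identical diagonalization via the matrix congruence $\begin{bmatrix} \mathbf{I} & h\mathbf{I} \\ \mathbf{0} & \mathbf{I} \end{bmatrix}^{\!\top} \mathbf{Q} \begin{bmatrix} \mathbf{I} & h\mathbf{I} \\ \mathbf{0} & \mathbf{I} \end{bmatrix}^{-1}$ induced by the substitution $w_k = v_k + h u_k$, whereas you carry out the same computation in scalar form. The gain bound is then read off from the diagonalized inequality exactly as you describe.
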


\begin{proof}
Let $v_k' = P u_k$ and $v_k = P' u_k = v_k' - h u_k$, where $h = (L+ \mu)/2$. We know that the input/output pair $(u, v')$ of $P$ satisfies the quadratic sector bound of \Cref{eq:non_diagonal-SB}, while the structure of the linear shift transformation implies the following linear transformation:

\begin{equation*}
    \begin{bmatrix}
    u_k \\
    v_k' 
    \end{bmatrix}
    = 
    \begin{bmatrix}
    \mathbf{I}_d & \mathbf{0}_d \\
    h \mathbf{I}_d & \mathbf{I}_d 
    \end{bmatrix}
    \begin{bmatrix}
    u_k \\
    v_k
    \end{bmatrix}.
\end{equation*}
Thus, we obtain that 

\begin{equation*}
    \begin{bmatrix}
    u_k \\
    v_k 
    \end{bmatrix}^{\top}
    \begin{bmatrix}
    \mathbf{I}_d & \frac{L + \mu}{2} \mathbf{I}_d \\
    \mathbf{0}_d & \mathbf{I}_d 
    \end{bmatrix}
    \begin{bmatrix}
    -2 \mu L \mathbf{I}_d & (L + \mu) \mathbf{I}_d \\
    (L + \mu) \mathbf{I}_d & -2\mathbf{I}_d
    \end{bmatrix}
    \begin{bmatrix}
    \mathbf{I}_d & \mathbf{0}_d \\
    \frac{L + \mu}{2} \mathbf{I}_d & \mathbf{I}_d 
    \end{bmatrix}
    \begin{bmatrix}
    u_k \\
    v_k
    \end{bmatrix}
    \geq 0.
\end{equation*}
As a result, performing the matrix multiplications yields the claimed diagonal sector bound. Finally, \Cref{eq:diagonal_SB} implies that

\begin{equation}
    \frac{(L - \mu)^2}{2} || u_k ||^2 - 2 || v_k ||^2 \geq 0 \iff || v_k ||^2 \leq \left( \frac{L - \mu}{2} \right)^2 || u_k ||^2,
\end{equation}
for all $k \geq 0$. Thus, summing over all $k$ (and assuming that $u \in \ell_2$) yields that $||P|| \leq (L - \mu)/2$, concluding the proof.
\end{proof}

\begin{remark}
It is not hard to show that for standard convex optimization the gain of the nonlinearity is in fact \emph{strictly} less than $(L - \mu)/2$. This implies that (unsurprisingly) all of the analyzed optimization methods will remain stable even in the boundary of convexity $\mu = 0$---although the convergence is clearly not linear in this case. However, it is well-know by now that this is not the case for min-max optimization \cite{DBLP:conf/iclr/DaskalakisISZ18}.
\end{remark}

The nonlinearity $P$---and hence its gain---will remain invariant across the analysis of different algorithms; the structure of each optimization method will be captured through the LTI system $K$. In particular, we will use the following fundamental result, which can be found in any textbook on robust control theory: 

\Hinfinity*

\subsection{Exponential Convergence of Optimization Algorithms}

Finally, the previous ingredients are combined in order to provide a sufficient condition for the exponential convergence of an optimization algorithm, described with the LTI system $K$:

\maintheorem*

\begin{proof}
Consider the linear feedback transformation $[P_{\rho}', K_{\rho}']$ of \Cref{fig:2}, for some $\rho \in (0,1)$, with $P_{\rho}' = P_{\rho} - h$ and $\mathbf{K}_{\rho}'(z) = \mathbf{K}'(\rho z)$ (\Cref{claim:K_r}). \Cref{lemma:P-gain} and \Cref{claim:transformed-sector_bound} imply that $||P_{\rho}'|| \leq (L - \mu)/2$. Moreover, by assumption it follows that $||P_{\rho}'|| || K_{\rho}'|| < 1$; also notice that the feedback interconnection $[P_{\rho}', K_{\rho}']$ is well-posed given that by assumption the controller $\mathbf{K}(z)$ (and hence $\mathbf{K}'(\rho z)$) is strictly proper. Thus, the small gain theorem implies that the feedback interconnection $[P_{\rho}', K_{\rho}']$ is finite-gain stable. By \Cref{lemma:linear_shift} this also implies that $[P_{\rho}, K_{\rho}]$ is finite-gain stable; notice that the operator $H$ is trivially linear and finite-gain stable. As a result, we can apply \Cref{lemma:exponential_reduction} to deduce that $|| x_k - x^*|| \leq C \rho^{k} || x_0 ||$, for any initial state $x_0 \in \mathbb{R}^d$. This concludes the proof.
\end{proof}

We should note that the matrix $\mathbf{K}'(z)$ is usually referred to as the \emph{complementary sensitivity matrix} of $\mathbf{K}(z)$.

\subsection{Linear Dynamical Systems}
\label{subsection:dynamical_systems}

A linear dynamical system is a set of recursive linear equations of the following form:

\begin{subequations}
\label{eq:dynamical_system}
\begin{align}
    \xi_{k+1} = \mathbf{A} \xi_k + \mathbf{B} v_k; \\
    u_k = \mathbf{C} \xi_k + \mathbf{D} v_k.
\end{align}
\end{subequations}

At every time-step $k = 0, 1, \dots$, $v_k$ represents the input, $u_k$ the output, while $\xi_k$ corresponds to the \emph{state} of the system. The linear dynamical system described with \eqref{eq:dynamical_system} is usually expressed more succinctly in the following block notation:

\begin{equation}
    \left[ 
\begin{array}{c|c} 
  \mathbf{A} & \mathbf{B} \\ 
  \hline 
  \mathbf{C} & \mathbf{D} 
\end{array}
\right].
\end{equation}

Importantly, first-order optimization algorithms, such as Gradient Descent, can be expressed in the form of \Cref{eq:dynamical_system}, where the input signal $v$ is associated, for example, with the min-max gradients of the underlying objective function.

\subsection{The Circle Criterion}

An alternative way of analyzing the stability of the feedback interconnection loop, besides the small gain theorem, is the so-called \emph{circle criterion}, which also offers a way of visualizing different optimization methods. Specifically, let us consider the stability of the standard feedback interconnection system of \Cref{fig:feedback_interconnection} with zero external inputs, i.e. the \emph{unforced} system. We will say that the system is \emph{absolutely stable} if it has a globally uniformly asymptotically stable equilibrium point at the origin for all nonlinearities within a given sector; in turn, this would imply that the unique fixed point of $F$ is a global attractor of the corresponding dynamics. In this context, the circle criterion offers a frequency-domain sufficient condition for absolute stability. In the sequel we will use the one-dimensional criterion; this is justified since although we are studying multi-dimensional systems, their transfer matrix can be expressed as $\mathbf{K}(z) = K(z) \mathbf{I}_d$.

\begin{proposition}[\cite{Khalil:1173048}]
    Consider a nonlinearity which lies in the sector $[a,b]$, such that $a < 0 < b$. Then, the system is absolutely stable if $K'(z)$ is stable and the Nyquist plot of $K'(e^{j \omega})$ lies in the interior of the disk $D(a, b)$
\end{proposition}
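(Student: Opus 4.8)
The plan is to derive the criterion from the small gain theorem (\Cref{theorem-sgt}) after centering the sector by a loop transformation, exactly mirroring the route used throughout the paper for the specific sector $[\mu, L]$. First I would encode the hypothesis that the nonlinearity $P$ lies in the sector $[a,b]$ as the pointwise quadratic constraint $(\phi(u) - a u)(b u - \phi(u)) \ge 0$, where $\phi$ is the static map defining $P$. Subtracting the central gain $c \define (a+b)/2$ produces a shifted nonlinearity $\hat{\phi} \define \phi - c$, and a short computation (the same one behind \Cref{lemma:P-gain}) shows $(\hat{\phi} + r u)(r u - \hat{\phi}) = (\phi - a u)(b u - \phi) \ge 0$ with $r = (b-a)/2$, since $c - r = a$ and $c + r = b$. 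Hence $\hat{\phi}$ lies in the symmetric sector $[-r, r]$, and diagonalising this quadratic sector bound gives that the shifted nonlinearity has $\ell_2$-gain at most $r$.

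Next I would transform the linear part consistently. Applying the linear shift transformation of \Cref{lemma:linear_shift} with scalar multiplier $h = c$ replaces the loop gain $K(z)$ by the complementary sensitivity $K'(z) = K(z)(1 - h K(z))^{-1}$, and \Cref{lemma:linear_shift} guarantees that the shifted interconnection is finite-gain stable if and only if the original one is. The hypothesis that $K'(z)$ is stable supplies exactly the finite-gain stability of the transformed linear part required to invoke the small gain theorem. At that point stability follows once $\|K'\| \cdot r < 1$, i.e. $\|K'\| < 2/(b-a)$; by \Cref{theorem:H-infinity} the gain equals the $\mathcal{H}_{\infty}$ norm, so this reads $\sup_{\omega} |K'(e^{j\omega})| < 2/(b-a)$.

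The remaining step, and the one I expect to be the main obstacle, is the geometric translation of this frequency-domain inequality into the disk condition of the statement. In the symmetric instance $b = -a$ the bound $\sup_{\omega} |K'(e^{j\omega})| < 2/(b-a)$ already states that the Nyquist curve of $K'$ lies inside the origin-centred disk of radius $2/(b-a)$, which is precisely $D(a,b)$ (whose diameter runs from $-1/a$ to $-1/b$). For general $a < 0 < b$ one must verify that the M\"{o}bius map $w \mapsto w/(1 - h w)$ carries the disk $D(a,b)$ onto the origin-centred disk of radius $r^{-1}$ and respects interiors; this is a routine but delicate inversive-geometry argument, tracking how lines and circles transform, matching the diameter endpoints $-1/a$ and $-1/b$, and checking orientation so that ``interior of $D(a,b)$'' corresponds to ``gain strictly below $2/(b-a)$.''

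Finally, having established finite-gain stability of the centred feedback interconnection for \emph{every} admissible sector nonlinearity, I would invoke the reduction from finite-gain stability to asymptotic stability in the spirit of \Cref{lemma:exponential_reduction} to conclude that the origin is a global attractor, which is the definition of absolute stability. The only genuine subtlety beyond bookkeeping is the circle-geometry equivalence in the third paragraph; everything else is a direct reuse of the sector-bound diagonalisation, the shift transformation, and the small gain theorem already assembled in the paper, which is why the sign regime $a < 0 < b$ (rather than $0 \le a < b$) is exactly the case where the small-gain route, as opposed to a passivity argument, is the natural one.
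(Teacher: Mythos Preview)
The paper does not prove this proposition; it is quoted verbatim from Khalil's textbook as a standard result in nonlinear control theory, with no accompanying argument. So there is no ``paper's own proof'' against which to compare your attempt.

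That said, your derivation via loop-shifting and the small gain theorem is the standard textbook route and is correct in outline, but there is a notational mismatch in your third paragraph that would cause trouble if carried out literally. You define $K'$ as the complementary sensitivity obtained by shifting with $h = c = (a+b)/2$, and correctly derive that the small-gain condition reads: the Nyquist plot of $K'$ lies in the \emph{origin-centred} disk of radius $2/(b-a)$. You then propose to translate this, via the M\"{o}bius map $w \mapsto w/(1-hw)$, into the condition that the Nyquist plot of $K'$ lies in $D(a,b)$. But these are two different disks on the same object $K'$, and for $a + b \neq 0$ they do not coincide; no M\"{o}bius map on $K'$ will reconcile them. The M\"{o}bius/inversive-geometry step you describe is the right one if the hypothesis is stated on the \emph{unshifted} transfer function $K$ (as in Khalil's original formulation), not on $K'$. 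In the paper's concrete applications (\Cref{proposition:generalized_Nyquist} and Figures~\ref{fig:Nyquist-OGD} and~\ref{fig:Nyquist-GOGD}) the shift by $(L+\mu)/2$ has already been applied, so the residual sector is always the symmetric one $[-(L-\mu)/2,\,(L-\mu)/2]$; in that case $a=-b$, the two disks coincide, and the issue evaporates---which is why the paper never needs the asymmetric case.
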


\section{Omitted Proofs}

In this section we provide all of the proofs omitted from \Cref{section:analysis}.

\subsection{Proof of \texorpdfstring{\Cref{proposition:OGD-tightness}}{}}
\label{appendix:proof-tightness}

\tightness*

\begin{proof}
First, notice that the OGD dynamics can be expressed as the following dynamical system:

\begin{equation*}
    \begin{bmatrix}
    x_{k+1} \\
    x_k
    \end{bmatrix}
    =
    T \left(
    \begin{bmatrix}
    x_k \\
    x_{k-1}
    \end{bmatrix}
    \right)
    = 
    \begin{bmatrix}
    x_k - 2\eta F(x_k) + \eta F(x_{k-1}) \\
    x_k 
    \end{bmatrix},
\end{equation*}
where $T: \mathbb{R}^{2d} \to \mathbb{R}^{2d}$. In particular, for $f(x) = x^2 - \cos(x)$ and $F(x) = \nabla f (x)$, it follows that $T: \mathbb{R}^2 \to \mathbb{R}^2$ is continuously differentiable, and in particular,

\begin{equation*}
    T \left(
    \begin{bmatrix}
    x_k \\
    x_{k-1}
    \end{bmatrix}
    \right)
    =
    \begin{bmatrix}
    x_k - 2\eta (2x_k + \sin x_k) + \eta (2 x_{k-1} + \sin x_{k-1}) \\
    x_k
    \end{bmatrix}.
\end{equation*}

It is easy to see that the unique critical point of the induced dynamical system arises at $(0, 0) \in \mathbb{R}^2$. Moreover, the Jacobian of mapping $T$ at the critical point---the \emph{linearization} of the dynamical system---reads 

\begin{equation*}
    \mathbf{J}_T(0, 0) = 
    \begin{bmatrix}
    1 - 6 \eta & 3\eta \\
    1 & 0
    \end{bmatrix}.
\end{equation*}

As a result, the characteristic equation of the Jacobian at the critical point is $\lambda^2 + \lambda (6\eta - 1) - 3\eta = 0$, and the corresponding eigenvalues are 

\begin{equation*}
    \lambda_{1,2} = \frac{1 - 6 \eta \pm \sqrt{36 \eta^2 + 1}}{2}.
\end{equation*}

Now observe that $\lambda_1 \in (0,1)$, while for $\eta > 2/9$ it follows that $|\lambda_2| > 1$. Thus, we conclude that the unique critical point $(0, 0)$ is unstable, and OGD diverges under any non-trivial initialization.
\end{proof}

\subsection{Proof of \texorpdfstring{\Cref{theorem:GOGD}}{}}
\label{appendix:proof-GOGD}

\genogd*

\begin{proof}
We will again denote with $\lambda = \kappa^{-1} \in (0,1)$. For $\alpha = 1/(2L)$ and $\beta = \ell/(2L)$ the complementary sensitivity function of $\gogd$ reads 

\begin{equation*}
    K'(\rho z) = \frac{1}{2L} \frac{\ell - (1 + \ell) \rho z}{\rho^2 z^2 + \frac{-3 + \lambda + \ell + \lambda \ell}{4}\rho z - \frac{\ell (1+\lambda)}{4}}.
\end{equation*}
Let $z_0, z_1$ be the roots of the characteristic equation

\begin{equation}    
    \label{eq:quadratic-GOGD}
    z^2 + \frac{-3 + \lambda + \ell + \lambda \ell}{4} z - \frac{\ell (1+\lambda)}{4} = 0.
\end{equation}

Then, it follows that the $\gogd$ controller is stable if $\rho > \max \{|z_0|, |z_1|\}$. In particular, solving \eqref{eq:quadratic-GOGD} yields 

\begin{equation*}
    z_{0, 1} = \frac{1}{8} \left( 3 - \lambda - \ell - \lambda \ell \pm \sqrt{(-3 + \lambda + \ell + \lambda \ell)^2 + 16\ell (1+\lambda)}  \right).
\end{equation*}

Observe that $z_0 > 0 \geq z_1$ and $|z_0| > |z_1|$. Next, we will use an elementary lemma to upper-bound the magnitude of $z_0$, and subsequently of $\max \{ |z_0|, |z_1|\}$.

\begin{lemma}
    For any $\ell \in [0, 1]$ and $\lambda \in (0,1)$,
    \begin{equation*}
        \sqrt{(-3 + \lambda + \ell + \lambda \ell)^2 + 16 \ell (1+\lambda)} < 5 - \lambda + \ell + \lambda \ell.
    \end{equation*}
\end{lemma}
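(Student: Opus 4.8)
The plan is to reduce this one-sided square-root inequality to an elementary polynomial comparison. First I would verify that the right-hand side is strictly positive: since $\lambda \in (0,1)$ we have $5 - \lambda > 4$, while $\ell + \lambda \ell \geq 0$, so $5 - \lambda + \ell + \lambda \ell > 4 > 0$. With positivity in hand, the asserted inequality $\sqrt{A} < B$ (with $B > 0$) is equivalent to $A < B^2$, so it suffices to establish
\[
(-3 + \lambda + \ell + \lambda \ell)^2 + 16 \ell (1+\lambda) < (5 - \lambda + \ell + \lambda \ell)^2.
\]

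The key simplifying move is to introduce the shorthand $s := \ell(1+\lambda)$, so that both squared terms share the summand $s$ and the target becomes $(s + \lambda - 3)^2 + 16 s < (s - \lambda + 5)^2$. Rather than expanding both squares, I would apply the difference-of-squares identity to the right-hand minus the left-hand square. Writing $a = s + \lambda - 3$ and $b = s - \lambda + 5$, a direct computation gives $b - a = 8 - 2\lambda$ and $b + a = 2 + 2s$, hence $b^2 - a^2 = (8 - 2\lambda)(2 + 2s) = 4(4 - \lambda)(1 + s)$. The inequality to be shown thus collapses to $4(4 - \lambda)(1 + s) > 16 s$, and after cancelling the common terms this reduces to the clean condition $\lambda(1 + s) < 4$.

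Finally I would discharge this bound using the parameter ranges. Since $\ell \leq 1$ and $\lambda < 1$, we have $s = \ell(1+\lambda) < 2$, so $1 + s < 3$; combined with $\lambda < 1$ this yields $\lambda(1 + s) < 3 < 4$, completing the argument. I do not anticipate a genuine obstacle: the only mild subtleties are remembering to confirm $B > 0$ before squaring (so that squaring preserves the inequality) and spotting the substitution $s = \ell(1+\lambda)$, which is precisely what makes the difference-of-squares factorization so transparent. The final bound $\lambda(1 + s) < 4$ is in fact quite loose, so the inequality carries comfortable slack throughout the stated parameter regime.
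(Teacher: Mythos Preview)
Your argument is correct: confirming positivity of the right-hand side, squaring, and then collapsing via the substitution $s=\ell(1+\lambda)$ and the difference-of-squares identity reduces everything to $\lambda(1+s)<4$, which holds comfortably on the stated range. The paper itself does not supply a proof of this lemma---it is simply stated as an elementary auxiliary fact inside the proof of \Cref{theorem:GOGD}---so there is no approach to compare against; your write-up is exactly the kind of verification the paper implicitly defers to the reader, and the $s$-substitution makes it especially clean.
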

As a result, it follows that $|z_0| = z_0 < 1 - \lambda/4$. Thus, it suffices to take $\rho = 1 - \lambda/4$ to ensure that $K'(\rho z)$ is stable. The next step is to bound the gain of the controller. To this end, \Cref{theorem:H-infinity} implies that

\begin{equation*}
    ||K'_{\rho}|| = \frac{1}{2L}  \sup_{\omega \in [-\pi, \pi]} \left| \frac{\ell - (1+\ell) e^{j \omega}}{\rho^2 e^{2j\omega} + \frac{-3 + \lambda + \ell + \lambda \ell}{4} \rho e^{j \omega} - \frac{\ell (1+\lambda)}{4}} \right|,
\end{equation*}
and with simple calculations we can see that

\begin{equation}
    \label{eq:psi-GOGD}
    \left| \frac{\ell - (1+\ell) e^{j \omega}}{\rho^2 e^{2j\omega} + \frac{-3 + \lambda + \ell + \lambda \ell}{4} \rho e^{j \omega} - \frac{\ell (1+\lambda)}{4}} \right|^2 = \frac{a_0 + a_1 \cos(\omega)}{b_0 + b_1 \cos(\omega) + b_2 \cos(2\omega)} \define \psi(\omega),
\end{equation}
where
\begin{subequations}
\begin{align*}
    a_0 &= \ell^2 + (1 + \ell)^2 \rho^2 ; \\
    a_1 &= - 2\ell (1+\ell) \rho ; \\
    b_0 &= \rho^4 + \frac{(-3 + \lambda + \ell + \lambda \ell)^2}{16} \rho^2 + \frac{\ell^2}{16}(1 + \lambda)^2; \\
    b_1 &= \left( \frac{(-3 + \lambda + \ell + \lambda \ell)\rho}{2} \right) \left( \rho^2 - \frac{\lambda (1+\ell)}{4} \right) ; \\
    b_2 &= - \frac{\ell (\lambda + 1)}{2} \rho^2.
\end{align*}
\end{subequations}

\begin{lemma}
    For $\rho = 1 - \lambda/4$, the function $\psi(\omega)$ as defined in \eqref{eq:psi-GOGD} attains its maximum either on $\omega = 0$ or $\omega = \pi$.
\end{lemma}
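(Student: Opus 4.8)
The plan is to reduce $\psi$ to a rational function of the single variable $x \define \cos(\omega) \in [-1,1]$ and then locate its maximum by elementary calculus. Using $\cos(2\omega) = 2\cos^2(\omega) - 1 = 2x^2 - 1$, the representation \eqref{eq:psi-GOGD} becomes
\[
    \psi(x) = \frac{a_0 + a_1 x}{(b_0 - b_2) + b_1 x + 2 b_2 x^2}, \qquad x \in [-1,1],
\]
where the endpoints $x = 1$ and $x = -1$ correspond exactly to $\omega = 0$ and $\omega = \pi$. Since $K'(\rho z)$ was shown above to be stable for $\rho = 1 - \lambda/4$, the denominator has no root on the unit circle, hence no root of the form $x = \cos(\omega)$; thus $\psi$ is smooth on $[-1,1]$, and it suffices to show that $\psi$ admits no local maximum in the open interval $(-1,1)$.

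Differentiating, $\psi'(x) = g(x)/D(x)^2$, where $D(x)$ is the denominator above and
\[
    g(x) = -2 a_1 b_2\, x^2 - 4 a_0 b_2\, x + \bigl( a_1 (b_0 - b_2) - a_0 b_1 \bigr)
\]
is at most quadratic; in particular $\sign \psi'(x) = \sign g(x)$. From the coefficient definitions evaluated at $\rho = 1 - \lambda/4$ I record the signs $a_0 > 0$, $a_1 \le 0$, $b_2 \le 0$, and $b_1 < 0$ — the last because $-3 + \lambda + \ell + \lambda \ell < 0$ while $\rho^2 - \tfrac{\lambda(1+\ell)}{4} > 0$ (using $\rho \ge 3/4$, so $\rho^2 \ge 9/16 > \tfrac12 \ge \tfrac{\lambda(1+\ell)}{4}$). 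The degenerate case $\ell = 0$ is immediate: then $a_1 = b_2 = 0$, so $g \equiv -a_0 b_1 > 0$, whence $\psi$ is strictly increasing and attains its maximum at $x = 1$, i.e. $\omega = 0$.

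For $\ell > 0$ the leading coefficient $-2 a_1 b_2 = -2 \ell^2 (1+\ell)(1+\lambda)\rho^3 < 0$, so $g$ is strictly concave; consequently $\psi$ has at most one interior local maximum, located at the larger root $r_2$ of $g$, since $\sign \psi' = \sign g$ forces $\psi$ to decrease for $x < r_1$, increase on $(r_1, r_2)$, and decrease for $x > r_2$. The crux is therefore to prove $r_2 \ge 1$, which for a concave $g$ is equivalent to $g(1) \ge 0$ (as this places $1 \in [r_1, r_2]$). Writing
\[
    g(1) = a_1 (b_0 - 3 b_2) - a_0 (b_1 + 4 b_2),
\]
substituting $\rho = 1 - \lambda/4$, and clearing denominators reduces $g(1) \ge 0$ to a polynomial inequality in $(\lambda, \ell) \in (0,1) \times (0,1]$. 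Granting it, concavity yields $r_1 \le 1 \le r_2$, so on $[-1,1]$ the map $\psi$ decreases on $[-1, r_1]$ and increases on $[r_1, 1]$ (or increases throughout if $r_1 \le -1$); in either case its maximum is attained at $x = 1$ or $x = -1$, that is at $\omega = 0$ or $\omega = \pi$, as claimed.

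The main obstacle is precisely the verification of the single inequality $g(1) \ge 0$: after substituting $\rho = 1 - \lambda/4$ it is an explicit but tedious polynomial inequality in the two bounded parameters $\lambda$ and $\ell$, and it is exactly here that the specific choice $\rho = 1 - \lambda/4$ is exploited. This step is entirely analogous to the endpoint estimates \eqref{eq:OGD-b_1}--\eqref{eq:OGD-b_2} in the proof of \Cref{theorem:OGD}, and can be discharged by the same elementary (if laborious) algebra.
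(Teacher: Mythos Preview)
The paper does not actually supply a proof of this lemma: both here and in the analogous OGD statement it simply declares that the claim ``can be verified using elementary calculus'' and moves on. Your proposal is therefore a genuine fleshing-out of what the paper leaves implicit, and the strategy---substitute $x=\cos\omega$, differentiate, and exploit the concavity of the quadratic numerator $g$ to force the unique potential interior maximum past $x=1$---is sound and correctly executed. The derivative computation, the sign analysis of $a_0,a_1,b_1,b_2$, the handling of the degenerate case $\ell=0$, and the reduction to the single condition $g(1)\ge 0$ are all valid. (One minor remark: the second factor in $b_1$ should read $\rho^2-\tfrac{\ell(1+\lambda)}{4}$ rather than $\rho^2-\tfrac{\lambda(1+\ell)}{4}$---this is a typo in the paper's display that you inherited---but the bound $\tfrac{\ell(1+\lambda)}{4}\le\tfrac12<\tfrac{9}{16}\le\rho^2$ holds just the same, so your sign conclusion for $b_1$ is unaffected.)

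The only step you leave open is the verification of $g(1)\ge 0$ after substituting $\rho=1-\lambda/4$, which you correctly identify as a two-parameter polynomial inequality of the same flavor as the endpoint estimates in the subsequent lemma (which the paper likewise states without proof). So your argument is both correct and strictly more informative than the paper's own treatment; the residual inequality is exactly the kind of routine symbolic check the paper defers throughout.
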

Thus, for $\rho = 1 - \lambda/4$ it follows that $|| K'_{\rho} || = \max \{ |K'(\rho)|, |K'(-\rho)| \}$. Finally, the theorem follows directly from the following lemma, implying that the conditions of the small gain theorem---and subsequently of \Cref{theorem:main_theorem}---are met. 

\begin{lemma}
    Let $\lambda \in (0,1)$ and $\ell \in [0,1]$. For $\rho = 1 - \lambda/4$ the following inequalities hold:
    
    \begin{equation*}
        \label{eq:GOGD-b_1}
        |\ell - (1 + \ell) \rho)| (1 - \lambda) < 4 \left| \rho^2 + \rho \frac{-3 + \lambda + \ell + \lambda \ell}{4} - \frac{\ell (1+ \lambda)}{4}\right|; 
    \end{equation*}
    
    \begin{equation*}
        \label{eq:GOGD-b_2}
        |\ell + (1 + \ell) \rho)| (1 - \lambda) < 4 \left| \rho^2 - \rho \frac{-3 + \lambda + \ell + \lambda \ell}{4} - \frac{\ell (1+ \lambda)}{4}\right|. 
    \end{equation*}
\end{lemma}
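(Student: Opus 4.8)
The plan is to recognize these two inequalities as the explicit, algebraic restatements of the gain conditions $|K'(\rho)|\,(L-\mu)/2 < 1$ and $|K'(-\rho)|\,(L-\mu)/2 < 1$. Writing $\lambda = \mu/L$ so that $(L-\mu)/2 = L(1-\lambda)/2$, and multiplying $|K'(\pm\rho)|$ (which carries the prefactor $1/(2L)$) by $(L-\mu)/2$, the factors of $L$ cancel and each condition becomes $(1-\lambda)\cdot|\text{numerator}| < 4\cdot|\text{denominator}|$, which is exactly the pair of displayed inequalities. Thus it suffices to verify them as pure inequalities in $(\lambda,\ell)\in(0,1)\times[0,1]$ with $\rho = 1-\lambda/4$.

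First I would resolve the absolute values. For $\rho = 1-\lambda/4\in(3/4,1)$ and $\ell\in[0,1]$ one checks $\ell-(1+\ell)\rho = \ell(1-\rho)-\rho \le (1-\rho)-\rho = 1-2\rho < 0$, so $|\ell-(1+\ell)\rho| = (1+\ell)\rho-\ell = 1-(1+\ell)\lambda/4$, while $\ell+(1+\ell)\rho>0$ trivially. For the denominators I would reuse the root structure already established in this proof: writing $p(z) = z^2 + \tfrac{-3+\lambda+\ell+\lambda\ell}{4}z - \tfrac{\ell(1+\lambda)}{4} = (z-z_0)(z-z_1)$ with $z_1\le 0 < z_0$ and $\max\{|z_0|,|z_1|\}<\rho$, the left denominator is $p(\rho)=(\rho-z_0)(\rho-z_1)>0$ and the right one is $p(-\rho)=(\rho+z_0)(\rho+z_1)>0$. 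Hence every modulus may be dropped while the enclosed quantities stay positive.

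With the moduli removed I would substitute $\rho=1-\lambda/4$ and expand both sides into polynomials in $\lambda,\ell$. The pleasant feature is that the $4p(\rho)$ side collapses to $1-\tfrac{\lambda}{4}-\tfrac{\lambda\ell(1+\lambda)}{4}$, and after cancellation the first inequality is equivalent to $\lambda\bigl(\tfrac14+\tfrac{\ell}{2}\bigr)<1$, which is immediate since $\lambda<1$ and $\tfrac14+\tfrac{\ell}{2}\le\tfrac34$. For the second inequality the same substitution reduces it, after collecting terms, to the positivity of $6-4\ell-\tfrac{5\lambda}{2}+\tfrac{\ell\lambda}{2}+\tfrac{\lambda^2}{4}$; this is linear and decreasing in $\ell$, so its minimum over $\ell\in[0,1]$ occurs at $\ell=1$, where it equals $2-2\lambda+\tfrac{\lambda^2}{4}$, a quadratic in $\lambda$ whose roots $4\pm2\sqrt2$ both exceed $1$, leaving it strictly positive on $(0,1)$.

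The only genuine work is the bookkeeping in these two expansions; once the denominators are certified positive and the moduli removed, both inequalities reduce to one-line elementary facts, so I anticipate no real obstacle beyond keeping the algebra organized.
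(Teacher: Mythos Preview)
Your proposal is correct. The paper itself does not give a proof of this lemma; it simply states the two inequalities and immediately uses them to conclude the theorem. Your argument supplies precisely the elementary verification the paper leaves implicit: you correctly determine the signs of both numerators and both denominators (the latter via the factorization $p(z)=(z-z_0)(z-z_1)$ together with the already-established bounds $z_1\le 0<z_0$ and $|z_1|<|z_0|<\rho$), substitute $\rho=1-\lambda/4$, and reduce each inequality to a trivial polynomial positivity check. I checked your two key simplifications---$4p(\rho)=1-\tfrac{\lambda}{4}-\tfrac{\lambda\ell(1+\lambda)}{4}$ and the second difference $6-4\ell-\tfrac{5\lambda}{2}+\tfrac{\ell\lambda}{2}+\tfrac{\lambda^2}{4}$---and both are accurate, as are the final one-line conclusions.
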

\end{proof}

\subsection{Proof of \texorpdfstring{\Cref{proposition:alt-ogd}}{}}
\label{appendix:proof-alt-ogd}

\altch*

\begin{proof}
If we transfer the alternating $\ogd$ dynamics \eqref{eq:alt-ogd} to the $z$-space we obtain that 

\begin{align*}
\begin{split}
 (z-1) X(z) = \eta \mathbf{A} (-2 + z^{-1}) Y(z); \\
 (z-1) Y(z) = \eta \mathbf{A}^{\top} (2z - 1) X(z),
\end{split}
\end{align*}
where we used the time-delay property of the $Z$-transform. Consequently, if we decouple these equations the claim follows.
\end{proof}

\subsection{Proof of \texorpdfstring{\Cref{claim:gain_noisy}}{}}
\label{appendix:proof_gain_noisy}

\gainoisy*

\begin{proof}
In \Cref{fig:noise} we illustrate the noisy nonlinearity after applying the linear shift transform (recall \Cref{appendix:lst}). Under the notation introduced in the figure, it follows that

\begin{equation*}
    ||v_k'|| = || s_k - h u_k || \leq || s_k - v_k || + || v_k - h u_k|| \leq \delta || v_k|| + || v_k - h u_k||,
\end{equation*}
where we used the triangle inequality, and the fact that $||s_k - v_k|| \leq \delta ||v_k||$. We also know from \Cref{lemma:P-gain} that $||v_k - h u_k|| \leq \frac{L - \mu}{2} || u_k||$. Finally, observe that the operator $F$ is $L$-Lipschitz, implying that $||v_k|| \leq L ||u_k||$; this concludes the proof.
\end{proof}

\begin{figure}[!ht]
    \centering
    \includegraphics[scale=0.55]{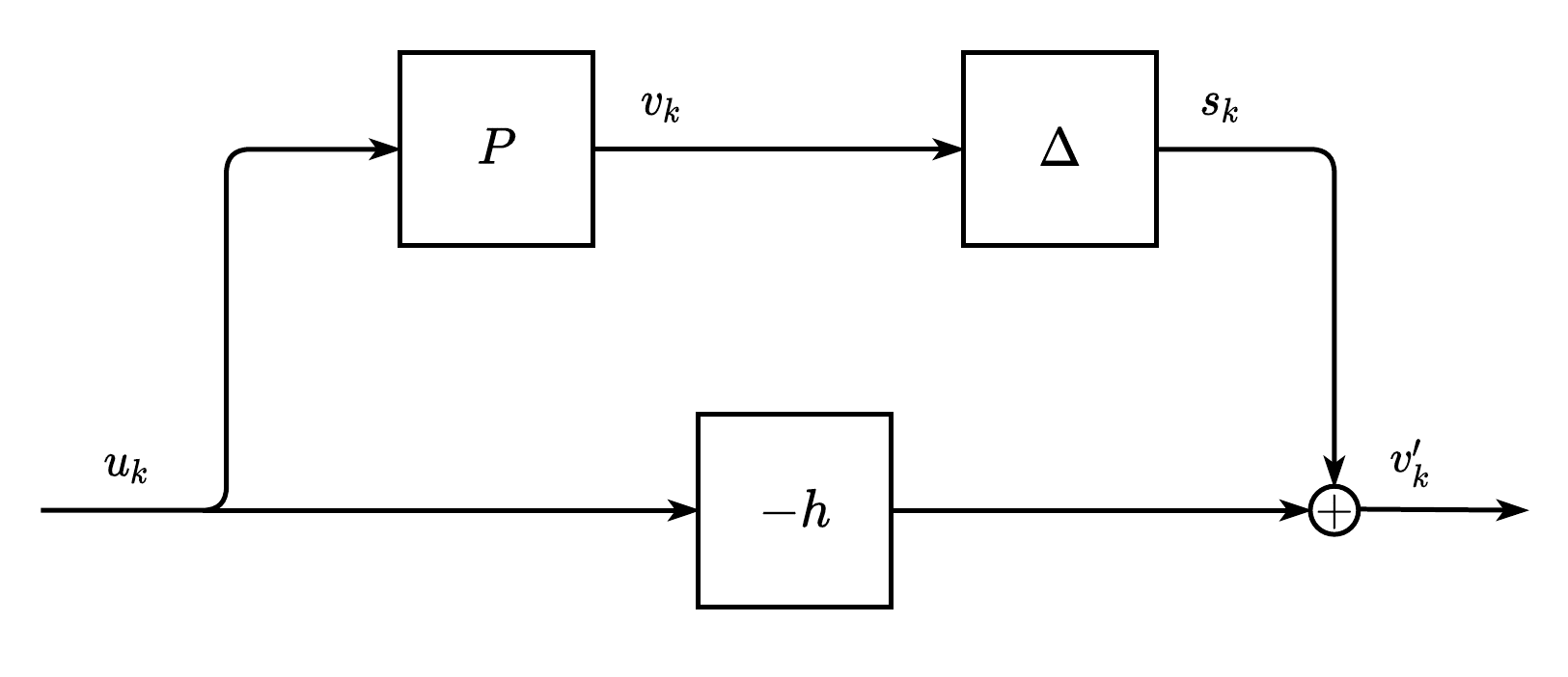}
    \caption{The ``noisy'' nonlinearity after applying the linear shift transformation.}
    \label{fig:noise}
\end{figure}

\section{Monotonicity of the Min-Max Gradients}
\label{appendix:monotonicity}

\begin{lemma}
    \label{lemma:monotonicity-min_max}
    Consider a continuously differentiable function $f : \mathbb{R}^n \times \mathbb{R}^m \ni (x,y) \mapsto \mathbb{R}$ such that $f(x, y)$ is $\mu$-strongly convex with respect to $x$ for all $y \in \mathbb{R}^m$, and $f(x,y)$ is $\mu$-strongly concave with respect to $y$ for all $x \in \mathbb{R}^n$. Then, the operator of the min-max gradients is $\mu$-monotone.
\end{lemma}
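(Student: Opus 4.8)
The plan is to reduce the claim to a sum of four first-order (strong) convexity/concavity inequalities whose function-value terms telescope to zero. Writing $w=(x,y)$ and $w'=(x',y')$, the target quantity unfolds as
\[
\langle F(w)-F(w'),\, w-w'\rangle = \langle \nabla_x f(x,y)-\nabla_x f(x',y'),\, x-x'\rangle - \langle \nabla_y f(x,y)-\nabla_y f(x',y'),\, y-y'\rangle,
\]
and the goal is to show this is at least $\mu\|w-w'\|^2 = \mu\|x-x'\|^2 + \mu\|y-y'\|^2$.

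A tempting first move is to insert intermediate points such as $(x',y)$ and $(x,y')$ and apply one-variable strong monotonicity of $\nabla_x f(\cdot,y)$ and of $-\nabla_y f(x,\cdot)$ directly. I expect this to be the main obstacle: it leaves residual cross terms measuring how $\nabla_x f$ varies with $y$ and how $\nabla_y f$ varies with $x$, which cancel cleanly only when the mixed second derivatives are symmetric (i.e.\ for $C^2$ functions) and are awkward to control for a merely $C^1$ function. I would therefore avoid gradient-monotonicity inequalities altogether and argue via \emph{function values}, where no such cross terms appear.

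Concretely, I would invoke the first-order characterization of $\mu$-strong convexity for $f(\cdot,y)$ and $f(\cdot,y')$, and of $\mu$-strong concavity for $f(x,\cdot)$ and $f(x',\cdot)$, producing four inequalities of the form $f(\mathrm{pt}_1)-f(\mathrm{pt}_2) \geq \langle \nabla f(\cdot),\, \Delta\rangle + \tfrac{\mu}{2}\|\Delta\|^2$ anchored at the four corners $(x,y)$, $(x',y)$, $(x,y')$, $(x',y')$. Summing the four inequalities, every function value appears once with coefficient $+1$ and once with $-1$, so the entire left-hand side telescopes to $0$. The surviving right-hand side collects exactly the two gradient inner products above: after combining $\langle \nabla_x f(x,y),\, x'-x\rangle$ with $\langle \nabla_x f(x',y'),\, x-x'\rangle$ (and the analogous pair for the $y$-gradients), these reassemble into $-\langle \nabla_x f(x,y)-\nabla_x f(x',y'),\, x-x'\rangle + \langle \nabla_y f(x,y)-\nabla_y f(x',y'),\, y-y'\rangle$, together with the quadratic remainder $\mu\|x-x'\|^2 + \mu\|y-y'\|^2$. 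Rearranging the resulting inequality $0 \geq (\text{right-hand side})$ then yields precisely $\langle F(w)-F(w'),\, w-w'\rangle \geq \mu\|w-w'\|^2$.

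The only delicate point is the bookkeeping: choosing the anchor/base-point pairings in the four inequalities so that the values cancel and the gradient terms recombine into $\nabla_x f(x,y)-\nabla_x f(x',y')$ and $\nabla_y f(x,y)-\nabla_y f(x',y')$ with the correct signs. Once the pairing is fixed, the remaining manipulations are purely algebraic, and setting $\mu=0$ recovers the classical monotonicity of the saddle-point operator for convex--concave $f$.
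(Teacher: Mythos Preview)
Your proposal is correct and coincides with the paper's proof: the paper also writes down the four first-order strong convexity/concavity inequalities anchored at the corners $(x_1,y_1),(x_2,y_1),(x_1,y_2),(x_2,y_2)$, sums them so the function values cancel, and rearranges to obtain $\langle F(w)-F(w'),\,w-w'\rangle \ge \mu\|w-w'\|^2$. Your discussion of the pairing and bookkeeping is exactly the content of that argument.
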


Before we proceed with the proof of this standard lemma, let us first recall that the operator associated with the min-max gradients is defined as 

\begin{equation}
    F(x,y) := \begin{bmatrix}
    \nabla_x f(x,y) \\
    - \nabla_y f(x,y)
    \end{bmatrix},
\end{equation}
for some continuously differentiable function $f(x,y)$.

\begin{proof}[Proof of \Cref{lemma:monotonicity-min_max}]
Following the techniques of Rockafellar and Rockajellm \cite{Rockafellar_americanmath.}, we obtain that for all $x_1, x_2 \in \mathbb{R}^n$ and $y_1, y_2 \in \mathbb{R}^m$,

\begin{subequations}
\begin{align}
    f(x_2, y_1) \geq f(x_1, y_1) + \langle \nabla_x f(x_1, y_1), x_2 - x_1 \rangle + \frac{\mu}{2} || x_2 - x_1 ||^2; \\
    f(x_1, y_2) \geq f(x_2, y_2) + \langle \nabla_x f(x_2, y_2), x_1 - x_2 \rangle + \frac{\mu}{2} || x_1 - x_2 ||^2; \\
    - f(x_1, y_2) \geq - f(x_1, y_1) + \langle -\nabla_y f(x_1, y_1), y_2 - y_1 \rangle + \frac{\mu}{2} || y_2 - y_1 ||^2; \\
    - f(x_2, y_1) \geq - f(x_2, y_2) + \langle -\nabla_y f(x_2, y_2), y_1 - y_2 \rangle + \frac{\mu}{2} || y_1 - y_2 ||^2.
\end{align}
\end{subequations}
Adding the four inequalities yields that 

\begin{align*}
    \langle x_1 - x_2, \nabla_x f(x_1, y_1) - \nabla_x f(x_2, y_2) \rangle + \langle y_1 - y_2, - \nabla_y f(x_1, y_1) + \nabla_y f(x_2, y_2) \rangle \notag \\
    \geq \mu || x_1 - x_2 ||^2 + \mu || y_1 - y_2||^2,
\end{align*}
concluding the proof.
\end{proof}

\section{Single-Call Variants of the Extra-Gradient Method}
\label{appendix:single_call}

Another application of the framework employed in our work consists of establishing equivalence between different optimization methods. Indeed, in this section we show equivalence between the $\ogd$ method, and other \emph{single-call} variants of the Extra-Gradient method. We stress that it is well-known that this equivalence holds only for the \emph{unconstrained} dynamics \cite{DBLP:conf/nips/HsiehIMM19}.

\subsection{Past Extra-Gradient Descent}

First, we consider Popov's Past Extra-Gradient Descent method ($\pegd$), which boils down to the following update rules:

\begin{subequations}
\label{eq:PEGD}
\begin{align}
    x_{k + 1/2} = x_k - \eta F(x_{k - 1/2}); \\
    x_{k + 1} = x_k - \eta F(x_{k + 1/2}).
\end{align}
\end{subequations}
Thus, the $\pegd$ controller can be expressed as 

\begin{subequations}
\label{eq:PEGD-normal_form}
\begin{align}
    \begin{bmatrix}
    \xi_{k+1}^{(1)} \\
    \xi_{k+1}^{(2)} \\
    \xi_{k+1}^{(3)}
    \end{bmatrix}
    &=
    \begin{bmatrix}
    \mathbf{0}_d & \mathbf{I}_d & -\eta \mathbf{I}_d \\
    \mathbf{0}_d & \mathbf{I}_d & \mathbf{0}_d \\
    \mathbf{0}_d & \mathbf{0}_d & \mathbf{0}_d
    \end{bmatrix}
    \begin{bmatrix}
    \xi_{k}^{(1)} \\
    \xi_{k}^{(2)} \\
    \xi_{k}^{(3)}
    \end{bmatrix}
    +
    \begin{bmatrix}
    \mathbf{0}_d \\
    - \eta \mathbf{I}_d \\
    \mathbf{I}_d
    \end{bmatrix}
    v_k;
    \\
    u_k &=
    \begin{bmatrix}
    \mathbf{0}_d & \mathbf{I}_d & -\eta \mathbf{I}_d
    \end{bmatrix}
    \begin{bmatrix}
    \xi_{k}^{(1)} \\
    \xi_{k}^{(2)} \\
    \xi_{k}^{(3)}
    \end{bmatrix},
\end{align}
\end{subequations}
where $\xi_k^{(1)}, \xi_k^{(2)}, \xi_k^{(3)} \in \mathbb{R}^d$ represent the state variables of the system. To see this, first notice that $u_k = \xi_k^{(2)} - \eta \xi_k^{(3)} = \xi_{k+1}^{(1)}$; hence, $v_k = F(\xi_{k+1}^{(1)} + x^*)$. Moreover, it follows that $ \xi_{k+1}^{(1)} = \xi_k^{(2)} - \eta F(\xi_k^{(1)} + x^*)$ and
$\xi_{k+1}^{(2)} = \xi_k^{(2)} - \eta \nabla (\xi_{k+1}^{(1)} + x^*)$. Thus, substituting $\xi_{k}^{(1)} = x_{k - 1/2} - x^*$ and $\xi_k^{(2)} = x_k - x^*$ gives the PEGD dynamics of \Cref{eq:PEGD}.

\begin{proposition}
    \label{proposition:TF-PEGD}
    The transfer matrix of the Past Extra-Gradient Descent controller can be expressed as $\mathbf{K}(z) = K(z) \mathbf{I}_d$, where $K(z) = \eta (1-2z)/(z^2-z)$.
\end{proposition}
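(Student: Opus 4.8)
The plan is to mirror exactly the derivation used for the OGD controller in \Cref{proposition:TF-OGD}: read off the scalar recursions encoded in the state-space description \eqref{eq:PEGD-normal_form}, eliminate the auxiliary state variables to obtain a single input/output relation between the input $v$ and the output $u$, and then pass to the $z$-domain using the time-delay property $\mathcal{Z}\{u_{k-r}\} = z^{-r} U(z)$.

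First I would extract the three state updates. The third row gives $\xi_{k+1}^{(3)} = v_k$, so that $\xi_k^{(3)} = v_{k-1}$; the output equation together with the first row gives $u_k = \xi_k^{(2)} - \eta \xi_k^{(3)} = \xi_k^{(2)} - \eta v_{k-1}$; and the second row yields $\xi_{k+1}^{(2)} = \xi_k^{(2)} - \eta v_k$, i.e. $\xi_k^{(2)} - \xi_{k-1}^{(2)} = -\eta v_{k-1}$. The key step is then to combine these so as to eliminate $\xi^{(2)}$ and $\xi^{(3)}$: subtracting the expression for $u_{k-1}$ from that for $u_k$ gives
\[
u_k - u_{k-1} = \bigl(\xi_k^{(2)} - \xi_{k-1}^{(2)}\bigr) - \eta\,(v_{k-1} - v_{k-2}) = -2\eta v_{k-1} + \eta v_{k-2},
\]
so that $u_k = u_{k-1} - 2\eta v_{k-1} + \eta v_{k-2}$. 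This is precisely the input/output recursion obtained for $\ogd$ in the proof of \Cref{proposition:TF-OGD}, which is the conceptual heart of the equivalence advertised in this appendix. Taking the $Z$-transform yields $U(z) = z^{-1} U(z) - 2\eta z^{-1} V(z) + \eta z^{-2} V(z)$, and solving for the ratio $U(z)/V(z)$ gives $K(z) = \eta(1 - 2z)/(z^2 - z)$, as claimed.

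I do not anticipate any genuine obstacle: the computation is routine linear algebra followed by an elementary transform, structurally identical to the $\ogd$ case. The only point deserving care is the bookkeeping of index shifts when passing from the state recursions to the relation among $u_k, v_{k-1}, v_{k-2}$; once those are tracked correctly, the coincidence with the $\ogd$ transfer function is immediate and in fact is the substance of the equivalence between $\pegd$ and $\ogd$.
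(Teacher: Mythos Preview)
Your proposal is correct and follows essentially the same approach as the paper: both extract from \eqref{eq:PEGD-normal_form} that $u_k = \xi_k^{(2)} - \eta v_{k-1}$ and $\xi_k^{(2)} = \xi_{k-1}^{(2)} - \eta v_{k-1}$, eliminate the state variables to obtain $u_k = u_{k-1} - 2\eta v_{k-1} + \eta v_{k-2}$, and then pass to the $z$-domain. The only cosmetic difference is that the paper substitutes $\xi_{k-1}^{(2)} = u_{k-1} + \eta v_{k-2}$ directly, whereas you form the difference $u_k - u_{k-1}$; the resulting recursion and transfer function are identical.
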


\begin{proof}
Let $(v, u)$ be the input/output pair of the controller. \Cref{eq:PEGD-normal_form} implies that $u_k = \xi_k^{(2)} - \eta v_{k-1} = ( \xi_{k-1}^{(2)} - \eta v_{k-1} ) - \eta v_{k-1} = (u_{k-1} + \eta v_{k-2}) - 2\eta v_{k-1}$. Thus, transferring to the $z$-domain gives us that $U(z) = z^{-1} U(z) + \eta z^{-2} V(z) - 2\eta z^{-1} V(z)$.
\end{proof}

As expected, the transfer function for the $\pegd$ method coincides with that of $\ogd$, implying the equivalence of the two methods.

\subsection{Reflected Gradient Descent}

We also consider the Reflective Gradient Descent method (henceforth $\rgd$) \cite{DBLP:journals/siamjo/Malitsky15}, which can be expressed through the following equations: 

\begin{subequations}
\label{eq:RGD}
\begin{align}
    x_{k + 1/2} = x_k - (x_{k-1} - x_k); \\
    x_{k + 1} = x_k - \eta F(x_{k + 1/2}).
\end{align}
\end{subequations}
Thus, the RGD controller can be expressed as 

\begin{subequations}
\label{eq:RGD-normal_form}
\begin{align}
    \begin{bmatrix}
    \xi_{k+1}^{(1)} \\
    \xi_{k+1}^{(2)} \\
    \xi_{k+1}^{(3)}
    \end{bmatrix}
    &=
    \begin{bmatrix}
    \mathbf{0}_d & 2 \mathbf{I}_d & \mathbf{I}_d \\
    \mathbf{0}_d & \mathbf{I}_d & \mathbf{0}_d \\
    \mathbf{0}_d & \mathbf{I}_d & \mathbf{0}_d
    \end{bmatrix}
    \begin{bmatrix}
    \xi_{k}^{(1)} \\
    \xi_{k}^{(2)} \\
    \xi_{k}^{(3)}
    \end{bmatrix}
    +
    \begin{bmatrix}
    \mathbf{0}_d \\
    - \eta \mathbf{I}_d \\
    \mathbf{0}_d
    \end{bmatrix}
    v_k;
    \\
    u_k &=
    \begin{bmatrix}
    \mathbf{0}_d & 2 \mathbf{I}_d & - \mathbf{I}_d
    \end{bmatrix}
    \begin{bmatrix}
    \xi_{k}^{(1)} \\
    \xi_{k}^{(2)} \\
    \xi_{k}^{(3)}
    \end{bmatrix}.
\end{align}
\end{subequations}

As a result, the following proposition implies that the controller of $\rgd$ coincides with the controller of the other---previously considered---single-call variants of Extra-Gradient: 

\begin{proposition}
    \label{proposition:TF-RGD}
    The transfer function of the Reflective Gradient Descent controller can be expressed as $\mathbf{K}(z) =  K(z) \mathbf{I}_d$, where $K(z) = \eta (1-2z)/(z^2-z)$.
\end{proposition}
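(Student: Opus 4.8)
The plan is to reproduce the strategy used for the analogous single-call variant in \Cref{proposition:TF-PEGD}: read off the scalar (per-block) recurrences encoded by the normal form \eqref{eq:RGD-normal_form}, eliminate the auxiliary state variables $\xi^{(2)}, \xi^{(3)}$ so as to express the output $u_k$ purely as a recurrence in its own past value $u_{k-1}$ together with the past inputs $v_{k-1}, v_{k-2}$, and then pass to the frequency domain via the time-delay property $\mathcal{Z}\{u_{k-r}\} = z^{-r} U(z)$. Since the transfer matrix is claimed to be $\mathbf{K}(z) = K(z) \mathbf{I}_d$, it suffices to carry out the computation at the level of a single $d$-dimensional block, treating each $\mathbf{I}_d$ as a scalar.

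Concretely, the state update in \eqref{eq:RGD-normal_form} yields the three relations $\xi_{k+1}^{(1)} = 2\xi_k^{(2)} + \xi_k^{(3)}$, $\xi_{k+1}^{(2)} = \xi_k^{(2)} - \eta v_k$, and $\xi_{k+1}^{(3)} = \xi_k^{(2)}$, while the readout gives $u_k = 2\xi_k^{(2)} - \xi_k^{(3)}$. The crucial observation is that the third equation makes $\xi^{(3)}$ a one-step delay of $\xi^{(2)}$, i.e. $\xi_k^{(3)} = \xi_{k-1}^{(2)}$, so that $u_k = 2\xi_k^{(2)} - \xi_{k-1}^{(2)}$. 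Substituting $\xi_k^{(2)} = \xi_{k-1}^{(2)} - \eta v_{k-1}$ collapses this to $u_k = \xi_{k-1}^{(2)} - 2\eta v_{k-1}$; applying the same identity one step earlier lets me solve for $\xi_{k-2}^{(2)} = u_{k-1} + 2\eta v_{k-2}$ and hence $\xi_{k-1}^{(2)} = u_{k-1} + \eta v_{k-2}$. I therefore expect to recover exactly the recurrence $u_k = u_{k-1} - 2\eta v_{k-1} + \eta v_{k-2}$, which is precisely the one obtained for $\ogd$ and $\pegd$, confirming the equivalence of all three single-call schemes at the controller level.

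Finally, taking the $Z$-transform of this recurrence gives $U(z) = z^{-1} U(z) - 2\eta z^{-1} V(z) + \eta z^{-2} V(z)$, and solving for the input/output ratio yields $K(z) = \eta (1-2z)/(z^2 - z)$, as claimed. I do not anticipate any real difficulty here, since the argument is pure linear-recurrence bookkeeping; the only point requiring care is the elimination step, where one must track the index shifts correctly and, in particular, recognize that $\xi^{(3)}$ serves merely as a delayed copy of $\xi^{(2)}$. Once that delay structure is identified, the remaining manipulations are routine and the transfer function follows immediately.
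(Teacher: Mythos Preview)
Your proposal is correct and follows essentially the same approach as the paper: both identify $\xi_k^{(3)} = \xi_{k-1}^{(2)}$, reduce the readout to the recurrence $u_k = u_{k-1} - 2\eta v_{k-1} + \eta v_{k-2}$, and then take the $Z$-transform. Your presentation is in fact slightly more explicit about the index-shifting step than the paper's terse chain of equalities, but the argument is identical.
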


\begin{proof}
From \eqref{eq:RGD-normal_form} we obtain that $u_k = 2\xi_k^{(2)} - \xi_{k-1}^{(2)} = 2(\xi_{k-1}^{(2)} - \eta v_{k-1}) - \xi_{k-1}^{(2)} = 2 \xi_{k-1}^{(2)} - \xi_{k-2}^{(2)} + \eta v_{k-2} - 2\eta v_{k-1} = u_{k-1} + \eta v_{k-2} - 2\eta v_{k-1}$, and taking the $Z$-transform concludes the proof.
\end{proof}

\end{document}